\renewcommand\leq{\leqslant}
\renewcommand\geq{\geqslant}
\newtheorem{theorem}{Theorem}[section] 
\newtheorem{proposition}[theorem]{Proposition}
\theoremstyle{definition}
\newtheorem{definition}[theorem]{Definition}
\newtheorem{lemma}[theorem]{Lemma}
\newtheorem{corollary}[theorem]{Corollary}
\theoremstyle{definition}
\theoremstyle{remark}
\newtheorem{remark}{Remark}
\theoremstyle{definition}
\newtheorem{example}{Example}[section]
\DeclareMathOperator{\conv}{conv}
\DeclareMathOperator{\aff}{aff}
\DeclareMathOperator{\supp}{supp}
\DeclareMathOperator{\proj}{Proj}
\newcolumntype{L}[1]{>{\RaggedRight\arraybackslash}p{#1}} 
\newcolumntype{C}[1]{>{\Centering\arraybackslash}p{#1}} 
\newcolumntype{R}[1]{>{\RaggedLeft\arraybackslash}p{#1}} 
\newcolumntype{N}[1]{>{\RaggedLeft\arraybackslash $ }p{#1}<{$}} 
\newcolumntype{M}{>{$}r<{$}} 
\newcolumntype{P}{>{$}l<{$}} 
\newcommand{\MC}{\mathrm{MC}}
\newcommand{\MP}{\mathrm{MP}}
\newcommand{\add}[1]{\textcolor{red}{#1}}
\begin{document}
\title{The Boolean polynomial polytope with multiple choice constraints}
\author{Sihong Shao\footnotemark[1]
\and Yishan Wu\footnotemark[2]}

\renewcommand{\thefootnote}{\fnsymbol{footnote}}
\footnotetext[1]{CAPT, LMAM and School of Mathematical Sciences, Peking University, Beijing 100871, China. Email: \texttt{sihong@math.pku.edu.cn}}
\footnotetext[2]{CAPT, LMAM and School of Mathematical Sciences, Peking University, Beijing 100871, China. Email: \texttt{wuyishan@pku.edu.cn}}

\maketitle



\begin{abstract}
We consider a class of $0$-$1$ polynomial programming termed multiple choice polynomial programming (MCPP)
where the constraint requires exact one component per subset of the partition to be $1$ after all the entries are partitioned. 
Compared to the unconstrained counterpart, there are few polyhedral studies of MCPP in general form. This paper serves as the first attempt to propose a polytope associated with a hypergraph to study MCPP, which is the convex hull of $0$-$1$ vectors satisfying multiple choice constraints and production constraints. With the help of the decomposability property, we obtain an explicit half-space representation of the MCPP polytope when the underlying hypergraph is $\alpha$-acyclic by induction on the number of hyperedges, which is an analogy of the acyclicity results on the multilinear polytope by Del Pia and Khajavirad (SIAM J Optim 28 (2018) 1049) when the hypergraph is $\gamma$-acyclic.
We also present a necessary and sufficient condition for the inequalities lifted from the facet-inducing ones for the multilinear polytope to be still facet-inducing for the MCPP polytope. This result covers the particular cases by B\"armann, Martin and  Schneider (SIAM J Optim 33 (2023) 2909).



\vspace*{4mm} 
\noindent \textbf{Keywords:}
Pseudo-Boolean optimization, 
Multiple choice constraint, 
Multilinear polytope, 
Hypergraph acyclicity, 
Lifted inequality

\noindent \textbf{Mathematics Subject Classification:}
90C09, 
52B12, 
90C57, 
05C65, 
90C26 
\end{abstract}

\section{Introduction}
\label{sec:introduction}
We consider a constrained $0$-$1$ polynomial programming problem of the following form:
\begin{subequations}
\label{eq:MCPP_pseudo_Bool_def}
\begin{align}
\max_{x\in \{0,1\}^n} \quad & f(x)=\sum_{J\in \mathcal{J}} a_J\prod_{i\in J} x_i,\label{eq:multilinear_form}\\
\text{s.t.} \quad & \sum_{i\in I} x_i=1,\ \forall\,I\in \mathscr{P},\label{eq:sum_x_i_1}
\end{align}
\end{subequations}
where $\mathcal{J}\subseteq 2^{[n]}$ is a family of nonempty subset of the index set $[n]:=\{1,2,\dots,n\}$, $a\in \mathbb{R}^{\mathcal{J}}$ is the vector of coefficients, and $\mathscr{P}$ is a partition of $[n]$, i.e., the disjoint union of all $I\in \mathscr{P}$ is $[n]$. Without loss of generality, we assume that $|I|>1$ for all $I\in \mathscr{P}$. 
The constraints in Eq.~\eqref{eq:sum_x_i_1} imply that there is exactly one entry of $x$ per subset of the partition equal to $1$. Thus the entries in each $I\in\mathscr{P}$ serve as a decision with $|I|$ choices. Therefore, the constraints are often termed \textit{multiple choice constraints} \cite{nauss_0-1_1978}, and Problem~\eqref{eq:MCPP_pseudo_Bool_def} is termed \textit{multiple choice polynomial programming} (MCPP) in this paper. The feasible region of Problem~\eqref{eq:MCPP_pseudo_Bool_def} is denoted by $\mathcal{X}$. Since for all $x\in \mathcal{X}$, any monomial of $f(x)$ with the divisor $x_i x_{i'}$, $i\neq i'$, $i,i'\in I$ and $I\in \mathscr{P}$, vanishes, we can eliminate them from $f(x)$. Therefore, 
it can be further assumed that, for each $J\in \mathcal{J}$,  we have
\begin{equation}
\label{eq:J_cond}
|J\cap I|\leq 1,\quad \forall\,I\in \mathscr{P}.
\end{equation}
In this situation, we call the objective function $f(x)$ to be \emph{$\mathscr{P}$-multilinear} and
let $\mathcal{J}_{\mathscr{P}}$ collect all the nonempty subsets $J$s of $[n]$ that satisfy Eq.~\eqref{eq:J_cond}.
Thus $\mathcal{J}\subseteq\mathcal{J}_{\mathscr{P}}$.


 
Since $(x_i)^p=x_i$ for $x_i\in \{0,1\}$, any $0$-$1$ polynomial programming with multiple choice constraints can be reformulated into MCPP.  The polynomial $f(x)$ is often referred to as ``multilinear function" \cite{boros_pseudo-boolean_2002}.  
Note that if $|I|=2$ for all $I\in \mathscr{P}$, $\mathcal{X}$ is isomorphic to $\{0,1\}^{|\mathscr{P}|}$, and Problem~\eqref{eq:MCPP_pseudo_Bool_def} degenerates to a pseudo-Boolean optimization instance by substituting half of the components of $x$ according to the multiple choice constraints (i.e., assumed $I=\{i,i'\}\in \mathscr{P}$, we can substitute $x_{i'}$ by $1-x_i$). Hence, MCPP subsumes pseudo-Boolean optimization (PBO) and can be regarded as a generalization of the latter: From binary to ``$|I|$-ary''. Since PBO can model various combinatorial problems \cite{boros_pseudo-boolean_2002}, MCPP is capable of modeling problems generalized from them, for example, MAX-$k$-CUT \cite{frieze_improved_1997} (generalized from MAX-CUT) and the hypergraph k-cut \cite{chandrasekaran_hypergraph_2021} (generalized from hypergraph cut), MAX-CSP \cite{deineko_approximability_2008} (generalized from MAX-SAT). Meanwhile, there exist other problems naturally associated with multiple choice decision which can be formulated as MCPP, such as the calculation of star discrepancy \cite{shao_ode_2023}. It is worth mentioning that all these  problems are NP-hard.

This work is devoted into performing a polyhedral study of MCPP. To this end, we need to first introduce a hypergraph $H(V, E)$ corresponding to Problem~\eqref{eq:MCPP_pseudo_Bool_def} with 
\begin{equation}
V := \mathscr{P}, \quad E := \{E(J)|J\in \mathcal{J}, |E(J)|>1\},
\end{equation}
where $E(J):=\{I\in\mathscr{P}||I\cap J|=1\}$ is defined for each $J \in \mathcal{J}_{\mathscr{P}}$. To facilitate the discussion, the family $\mathcal{J}$ is enlarged into $\mathcal{J}^H$ by adding monomials for ensuring the subset-uniform property 
with respect to (w.r.t) $V$ 
and $\{i\}\in \mathcal{J}^H$ for any $i\in [n]$.  
Actually, it can be shown that for any $\mathcal{J}$, there exists $\mathcal{J}^H$ such that $\mathcal{J}\subseteq \mathcal{J}^H$ and thus after possibly introducing zero entries of the coefficient vector $a$ for the added monomials in $\mathcal{J}^H\setminus \mathcal{J}$, we can replace $\mathcal{J}$ with $\mathcal{J}^H$ in Eq.~\eqref{eq:multilinear_form}. More explanations on this point and the family $\mathcal{J}^H$ are left for Section~\ref{subsec:intro_J_H}. After representing each monomial $\prod_{i\in J} x_i$ with a new variable $w_J$ for $J\in \mathcal{J}^H$ --- a common practice to obtain a linear objective function from a polynomial one, and adding the production constraints $w_J=\prod_{i\in J} w_i$,
Problem~\eqref{eq:MCPP_pseudo_Bool_def} with $\mathcal{J}^H$ equivalently becomes 
\begin{subequations}
\label{eq:Linearized_MCPP}
\begin{align}
\max_{w\in \{0,1\}^{\mathcal{J}^H}}\quad & \sum_{J\in \mathcal{J}^H} a_Jw_J,\\
\text{s.t.} \quad & w(I) = 1,\ \forall\, I\in \mathscr{P},\label{eq:w_J_multichoice_cons}\\
& w_J=\prod_{i\in J} w_i,\ \forall\, J\in \mathcal{J}^H,\ |J|>1, \label{eq:w_J_prod_cons}
\end{align}
\end{subequations}
where we have applied the conventions: $w_i: = w_{\{i\}}=x_i$ for all $i\in [n]$,
and $w(A)=\sum_{i\in A} w_{i}$ for any nonempty subset  $A\subseteq [n]$. 
These conventions also apply to other vectors than $w$ hereafter. 
In order to develop the polytope theory for Problem~\eqref{eq:Linearized_MCPP}, 
it is key to understand the structure of the convex hull of its feasible region. 
We refer to the feasible region as \emph{$\mathscr{P}$-multilinear set} w.r.t. $H(V, E)$, denoted by $\mathscr{S}^H$, i.e.,
\begin{equation}
\label{eq:def_SG_MCMP_set}
\mathscr{S}^H=\left\{w\in \{0,1\}^{\mathcal{J}^H} \middle|w \text{ satisfies Eqs.~\eqref{eq:w_J_multichoice_cons} and \eqref{eq:w_J_prod_cons}}\right\}.
\end{equation}
The subsequent discussion focuses on $\MC^H$ --- the convex hull of $\mathscr{S}^H$: $\MC^H =\conv  \mathscr{S}^H$,  and we call it the Boolean polynomial polytope with multiple choice constraints or the MCPP polytope. In particular, the structural properties of $\MC^H$, including its half-space representation and lifting facets from related polytopes,  are concerned.


\subsection{Contributions of this work}

The first contribution gives an explicit half-space representation of $\MC^H$ under the condition that $H$ is $\alpha$-acyclic.
The $\alpha$-acyclic hypergraph belongs to the most general case among the well-known four degrees of acyclicity: 
$\alpha$, $\beta$, $\gamma$ and Berge-acyclic (see Theorem 6.1 of \cite{fagin_degrees_1983}). 
An important characterization is that $\alpha$-acyclic hypergraphs have join trees \cite{beeri_desirability_1983}.

\begin{theorem}[Half-space Representation]
\label{thm:alpha_acyclic}
Suppose $H(V,E)$ is $\alpha$-acyclic and $T=(E,\mathcal{E})$ is a join tree of $H$. Then
\begin{subequations}
\label{eq:MC_H_repres_alpha_acyclic}
\begin{align}
\MC^H=\bigg\{w\in \mathbb{R}_{\geq 0}^{\mathcal{J}^H}\bigg|
&w(I)=1,\ \forall\, I\in V;\\
&w_i - \sum_{J\in \mathcal{J}^e: J\ni i}w_J = 0,\ \forall\, e\in E, i\in I\in e;\\
&\sum_{J\in \mathcal{J}^e: J\supseteq J_0} w_J -\sum_{J\in \mathcal{J}^{e'}: J\supseteq J_0}w_J = 0,\ \forall\, \{e,e'\}\in \mathcal{E},|e\cap e'|>1,J_0\in \mathcal{J}^{e\cap e'}\bigg\}.
\end{align}
\end{subequations}
\end{theorem}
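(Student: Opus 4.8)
The plan is to prove the theorem by induction on the number of hyperedges $|E|$, exploiting the join-tree structure of the $\alpha$-acyclic hypergraph $H$. The base case $|E|=0$ is the situation where $H$ has only singleton information, so $\mathcal{J}^H=\{\{i\}:i\in[n]\}$ and $\MC^H$ is just the product of simplices $\{w\ge 0: w(I)=1,\ \forall I\in V\}$, which is exactly the right-hand side of Eq.~\eqref{eq:MC_H_repres_alpha_acyclic} with the last two families of equalities vacuous. For the inductive step, I would pick a \emph{leaf} $e$ of the join tree $T=(E,\mathcal{E})$ with unique neighbor $e'$; by the defining property of a join tree, the "new" vertices $e\setminus e'$ appear in no other hyperedge, and the variables $w_J$ with $J$ hitting some $I\in e\setminus e'$ live only in $\mathcal{J}^e$. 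Deleting $e$ gives a smaller $\alpha$-acyclic hypergraph $H'$ with join tree $T'=T-e$, to which the induction hypothesis applies, yielding the half-space representation of $\MC^{H'}$.

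The heart of the argument is a decomposition lemma — presumably the "decomposability property" advertised in the abstract — asserting that $\MC^H$ is the intersection of (the extension of) $\MC^{H'}$ with the local polytope associated to the single hyperedge $e$, glued along the separator $e\cap e'$. Concretely, I would show that a point $w$ lies in $\MC^H$ if and only if its projection onto the coordinates indexed by $\mathcal{J}^{H'}$ lies in $\MC^{H'}$, its projection onto $\mathcal{J}^e$ lies in the single-hyperedge polytope $\MC^{\{e\}}$, and the two agree on the shared coordinates $\mathcal{J}^{e\cap e'}$ — and that this latter agreement is precisely captured by the equalities $\sum_{J\in\mathcal{J}^e:J\supseteq J_0}w_J=\sum_{J\in\mathcal{J}^{e'}:J\supseteq J_0}w_J$ for $J_0\in\mathcal{J}^{e\cap e'}$, together with the "linking" equalities $w_i=\sum_{J\in\mathcal{J}^e:J\ni i}w_J$ that tie the new coordinates back to the singletons. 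The forward direction (any vertex of $\MC^H$ satisfies all the equalities) is a direct check on $0$-$1$ points. The reverse direction is the real work: given $w$ satisfying the full system, use the IH to write the $H'$-part as a convex combination of vertices of $\mathscr{S}^{H'}$, then argue that the separator equalities let one "extend" each of these vertices consistently over $e\setminus e'$ — essentially a conditional-probability / gluing argument over the product-of-simplices structure on $e$ — to exhibit $w$ as a convex combination of genuine vertices of $\mathscr{S}^H$. One also needs the separate fact that the single-hyperedge polytope $\MC^{\{e\}}$ has exactly the claimed description (only the $w(I)=1$ and the $w_i=\sum_{J\ni i}w_J$ equalities), which should be an easy standalone lemma, likely already established earlier in the paper.

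I expect the main obstacle to be the gluing step in the reverse inclusion: showing that two convex representations — one of the $H'$-restriction coming from the IH, one of the $e$-restriction — that agree on the separator marginals can be spliced into a single convex representation over all of $\mathcal{J}^H$. This is where the subset-uniform construction of $\mathcal{J}^H$ and the fact that the separator $e\cap e'$ is "fully decorated" (contains all its induced monomials) must be used: one needs that knowing $w_J$ for all $J\in\mathcal{J}^{e\cap e'}$ already pins down a full distribution on the assignments to $\bigcup(e\cap e')$, so the two local distributions can be coupled over this common refinement. A clean way to organize this is to reason with the underlying probability distributions on feasible $0$-$1$ assignments rather than with explicit convex coefficients, so that "agreement on the separator" becomes "same marginal on the separator variables" and the splice is a standard conditional-independence construction. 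Care is needed to verify that the resulting $0$-$1$ vectors indeed satisfy every production constraint $w_J=\prod_{i\in J}w_i$ for $J\in\mathcal{J}^H$, including those $J$ that straddle $e\setminus e'$ and $e\cap e'$; here the hypothesis $|e\cap e'|>1$ (versus the degenerate case where the separator equalities are absent) must be handled, and the case $|e\cap e'|\le 1$ treated separately as a trivial disjoint gluing.
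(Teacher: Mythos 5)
Your proposal follows essentially the same route as the paper: induction on $|E|$ by peeling off a leaf $e_1$ of the join tree, with the heart being a decomposability lemma (the paper's Theorem~\ref{thm:decomp}) proved by exactly the coupling you describe --- writing both local polytopes' points as uniform convex combinations of $0$-$1$ vertices and matching vertices that agree on the separator marginals, with the single-hyperedge polytope and the $|e\cap e'|\le 1$ cases handled separately as you indicate. The only execution details you leave implicit that the paper spells out are (i) when the separator $p=e_1\cap e_2$ is not itself a hyperedge, the paper adjoins it as one (via Proposition~\ref{prop:add_hyperedge_isomorph}) so the abstract decomposition theorem applies, then eliminates the auxiliary coordinates, and (ii) Lemma~\ref{lem:MC_H_cap_eq_T} is needed to check that the equalities produced by the recursion are implied by the join-tree-restricted system in the theorem statement.
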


Theorem~\ref{thm:alpha_acyclic} allows us to solve Eq.~\eqref{eq:MCPP_pseudo_Bool_def} in polynomial time when the rank of $H$ and $|I|$ for $I\in V$ are bounded.

We prove the above Theorem~\ref{thm:alpha_acyclic} by induction on $|E|$. The inductive step mainly consists of deducting description of $\MC^H$ from known descriptions of $\MC^{H_1}$ and $\MC^{H_2}$, where $H_1$ and $H_2$ are subhypergraphs of $H$. The crucial part of the deduction is a sufficient condition
for decomposability of $\MC^H$. We present it in Theorem~\ref{thm:decomp} of independent interest.

The second contribution is a verifiable necessary and sufficient condition for a lifted inequality to be facet-inducing. 
The so-called ``multilinear polytope'', $\MP^H:=\conv\mathcal{S}^H$, was introduced to study PBO \cite{del_pia_polyhedral_2017}
\begin{equation}
\label{eq:PBO}
\max_{x\in \{0,1\}^V} \sum_{I\in V}p_Ix_I+\sum_{e\in E}p_e\prod_{I\in e}x_I,
\end{equation}
where $\mathcal{S}^H$ is the multilinear set
\begin{equation}
\label{eq:multilinear_set}
\mathcal{S}^H=\left\{z\in \{0,1\}^{L(V)\cup E}\middle| z_e=\prod_{I\in e} z_I,\ \forall\, e\in E\right\},
\end{equation}
$L(V):=\{\{I\}|I\in V\}$ denotes the collection of all singleton subsets of $V$, and $z_I: = z_{\{I\}}$. 
It can be readily seen that compared to $\mathscr{S}^H$ in Eq.~\eqref{eq:def_SG_MCMP_set} for MCPP, $\mathcal{S}^H$ for PBO lacks the multiple choice constraints. Suppose $c\cdot z\leq \delta$ is a valid inequality for $\MP^H$. For any $S_I\subseteq I,I\in V$, we have $0\leq w(S_I)\leq w(I) = 1$
for any $w\in \mathscr{S}^H$. 
Thus, we can replace $z_{I}$ with $w(S_I)$ and $z_e$ with $\prod_{I\in e} w(S_I)$, then 
linearize the inequality to obtain a valid inequality for $\MC^H$. More precisely, the following inequality is valid for $\MC^H$:
\begin{equation}
\label{eq:lifted_valid_ineq0}
\sum_{e\in L(V)\cup E} c_e \mathscr{L} \prod_{I\in e} \sum_{i\in S_I} w_i\leq \delta,
\end{equation}
where the operator $\mathscr{L}$ replaces the monomials involving $w_i, i\in [n]$, i.e., $\prod_{i\in J}w_i$, with $w_{J}$.
This can be seen as follows: for any $w\in \mathscr{S}^H$, letting $z_{I}=w(S_I)$ for $I\in V$, and $z_e=\prod_{I\in e} \sum_{i\in S_I} w_i$ for $e\in E$, we have $z\in \mathcal{S}^H$, and the LHS of the inequality Eq.~\eqref{eq:lifted_valid_ineq0} becomes $c\cdot z$, so it holds for any $w\in \MC^H$. 
We call Eq.~\eqref{eq:lifted_valid_ineq0} the lifted inequality of $c\cdot z\leq \delta$ w.r.t. $S_I, I\in V$.
Since facets are important structure\add{s} of polytopes, it is natural to ask:
\begin{quote}
\emph{
Is the lifted inequality still facet-inducing for $\MC^H$ when $c\cdot z\leq \delta$ is facet-inducing for $\MP^H$?
}
\end{quote}

Answering the above question faces some difficulties because $\MC^H$ is not full-dimensional. We resort to the full-dimensional polytope $\MC^H_\leq$ (see Eq.~\eqref{eq:MCPP_MC_leq_def}). To utilize the properties of $\MC^H_\leq$ more straightforwardly, it is required that $\MC^H$ is isomorphic to $\MC^H_\leq$, the sufficient condition of which is $H$ being downward-closed as stated in Proposition~\ref{prop:MC_leq_bijection}.
With these at our disposal, we give a positive answer in Theorem~\ref{thm:lifted_MP0} which  
provides an easily verifiable necessary and sufficient condition.



\begin{theorem}[Lifted Facet-inducing Inequalities]
\label{thm:lifted_MP0}
Suppose the hypergraph $H(V,E)$ is downward-closed, i.e., for any $e' \subseteq e, e \in E$, $|e'| > 1$, we have $e' \in E$. Let $c \cdot z \leq \delta$ be a facet-inducing inequality for $\MP^H$. Define
\begin{align}
V_0 & =\{I\in V|\forall\, z\in \MP^H, z_{I}=0\ \text{implies}\ c\cdot z=\delta\}, \label{eq:def_V_0_V_1} \\
V_1 & =\{I\in V|\forall\, z\in \MP^H, z_{I}=1\ \text{implies}\ c\cdot z=\delta\}. \label{eq:def_V_0_V_11}
\end{align}
For any $S_I\subset I, S_I\neq \varnothing, I\in V$, the inequality \eqref{eq:lifted_valid_ineq0}
is facet-inducing for $\MC^H$ if and only if the following condition holds: 
\begin{equation}
\label{eq:lifted_facet_necc_suff}
|S_I|=1,\ \forall\, I\in V_0\ \text{and}\ |S_I|=|I|-1,\ \forall\, I\in V_1.
\end{equation}
\end{theorem}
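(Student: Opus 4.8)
The plan is to pass to the full-dimensional polytope $\MC^H_\leq$ and to recognise the lifted face as the preimage of $F_{\MP}:=\{z\in\MP^H:c\cdot z=\delta\}$ under an explicit linear map. Fix for each $I\in V$ a designated element $i^0_I\in I\setminus S_I$ (possible since $\varnothing\neq S_I\subsetneq I$), and use Proposition~\ref{prop:MC_leq_bijection} to identify $\MC^H$ with $\MC^H_\leq\subseteq\mathbb{R}^{\mathcal{J}^H_\leq}$, of dimension $d:=\dim\MC^H=|\mathcal{J}^H_\leq|$. Define $\phi\colon\mathbb{R}^{\mathcal{J}^H_\leq}\to\mathbb{R}^{L(V)\cup E}$ by $\phi(w)_I=\sum_{i\in S_I}w_i$ and $\phi(w)_e=\sum_{J\in\mathcal{J}^e:\,J\subseteq\bigcup_I S_I}w_J$ (all these coordinates lie in $\mathcal{J}^H_\leq$ because $i^0_I\notin S_I$). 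Since $\mathscr{L}\prod_{I\in e}w(S_I)=\sum_{J\in\mathcal{J}^e,\,J\subseteq\bigcup_I S_I}w_J$, this $\phi$ agrees on the vertices $\proj w$, $w\in\mathscr{S}^H$, with the vertex map $g\colon w\mapsto\big((w(S_I))_{I\in V},(\prod_{I\in e}w(S_I))_{e\in E}\big)$; hence $\phi(\MC^H_\leq)=\MP^H$, the left-hand side of \eqref{eq:lifted_valid_ineq0} equals $c\cdot\phi(w)$, and $F_{\MC}:=\{w\in\MC^H_\leq:\text{equality in \eqref{eq:lifted_valid_ineq0}}\}=\phi^{-1}(F_{\MP})$. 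Validity of \eqref{eq:lifted_valid_ineq0} is already shown; $F_{\MC}$ is a proper face because some vertex $\hat z$ of $\MP^H$ has $c\cdot\hat z<\delta$ and lifts (via any $w\in\mathscr{S}^H$ with $w(S_I)=\hat z_I$) to a vertex of $\MC^H_\leq$ off $F_{\MC}$. It remains to decide when $\dim F_{\MC}=d-1$.

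The second step splits $\dim F_{\MC}$ into a "base'' and a "fibre'' part. Define an affine section $\iota\colon\mathbb{R}^{L(V)\cup E}\to\mathbb{R}^{\mathcal{J}^H_\leq}$ of $\phi$ by $\iota(z)_i=[i=\hat s_I]\,z_I$ for $i\in I\setminus\{i^0_I\}$ (where $\hat s_I\in S_I$ is a fixed representative) and $\iota(z)_J=z_e$ if $J=\{\hat s_I:I\in e\}$ and $0$ otherwise; one checks $\phi\iota=\mathrm{id}$, that $\iota$ is injective, and that $\iota$ sends vertices of $\MP^H$ to vertices of $\MC^H_\leq$, so $\iota(F_{\MP})\subseteq F_{\MC}$ is an affine copy of $F_{\MP}$. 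Writing $U:=\mathcal{S}^H\cap F_{\MP}$ and letting $L_z$ denote the direction space of the fibre $\operatorname{conv}(g^{-1}(z))$ — a face of $\MC^H_\leq$ isomorphic to the MCPP polytope on the reduced hypergraph in which each $I$ is shrunk to $S_I$ if $z_I=1$ and to $I\setminus S_I$ if $z_I=0$ — a short argument on the vertices of $F_{\MC}$ gives
\[
\operatorname{dir}(F_{\MC})=\iota_0(\operatorname{dir}F_{\MP})\ \oplus\ \textstyle\sum_{z\in U}L_z ,\qquad \textstyle\sum_{z\in U}L_z\subseteq\ker\phi_0 ,
\]
the sum being direct because $\phi_0\iota_0=\mathrm{id}$ while $\phi_0$ kills each $L_z$. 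As $\dim\iota_0(\operatorname{dir}F_{\MP})=|V|+|E|-1$ and $\dim\ker\phi_0=d-|V|-|E|$, we conclude that \eqref{eq:lifted_valid_ineq0} is facet-inducing for $\MC^H$ \emph{if and only if} $\sum_{z\in U}L_z=\ker\phi_0$.

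The third step verifies this kernel identity precisely under \eqref{eq:lifted_facet_necc_suff}. The space $\ker\phi_0$ is block-diagonal over parts and edges: its $I$-block is $\{(w_i)_{i\in I\setminus\{i^0_I\}}:\sum_{i\in S_I}w_i=0\}$ (dimension $|I|-2$) and its $e$-block is $\{(w_J)_{J\in\mathcal{J}^e_\leq}:\sum_{J\subseteq\bigcup_I S_I}w_J=0\}$ (dimension $\prod_{I\in e}(|I|-1)-1$). Two structural facts drive the analysis: first, $V_0=\{I:\hat z_I=1$ for every vertex $\hat z$ of $\MP^H$ with $c\cdot\hat z<\delta\}$ and symmetrically for $V_1$ (both $V_0\cap V_1=\varnothing$); second, if some coordinate $z_I$ is constant on the facet $F_{\MP}$ then $F_{\MP}$ equals the face $\{z_I=0\}\cap\MP^H$ or $\{z_I=1\}\cap\MP^H$, which forces $I$ to be isolated and $I\in V_0$, resp.\ $I\in V_1$; in that degenerate case every $z\in U$ has the same value of $z_I$, so the $I$-block of $\sum_z L_z$ is recovered exactly when $|S_I|=1$ (resp.\ $|S_I|=|I|-1$), i.e. exactly the requirement of \eqref{eq:lifted_facet_necc_suff}. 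In the remaining (non-degenerate) case every $z_I$ takes both values on $U$, so all $I$-blocks are automatically recovered, and the whole question concentrates on the edge blocks: for $e\in E$ one must show that the sign patterns $\{(z_I)_{I\in e}:z\in U\}$ occurring among the facet vertices $U$ of $\MP^H$, together with the within-fibre monomial perturbations, span the $e$-block of $\ker\phi_0$ precisely when $|S_I|=1$ for $I\in V_0\cap e$ and $|S_I|=|I|-1$ for $I\in V_1\cap e$ — here the characterisation of $V_0,V_1$ controls which patterns are forced to occur on $F_{\MP}$. The "only if'' direction then falls out of the same computation: if $|S_{I_0}|\ge 2$ for some $I_0\in V_0$ (or $|S_{I_0}|\le|I_0|-2$ for some $I_0\in V_1$), the relevant edge block — or, in the degenerate case, the $I_0$-block — is underspanned, whence $\dim F_{\MC}\le d-2$.

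I expect the last part of Step~3 — filling the edge blocks — to be the main obstacle. It requires simultaneously controlling (i) which tuples $(z_I)_{I\in e}$ are realised by vertices of the facet $F_{\MP}$ of the multilinear polytope (not merely by vertices of $\MP^H$), and (ii) how monomial perturbations coming from different fibres combine: the elementary fibre directions carry a tensor/ANOVA structure over $\prod_{I\in e}\mathbb{R}^{I}$, and one must check that the cross-block contributions (those coupling an $I$-singleton to $e$-monomials, which are present because $w_i=\sum_{J\in\mathcal{J}^e:\,J\ni i}w_J$ on $\MC^H$) cancel appropriately so that $\sum_z L_z$ is genuinely block-diagonal and fills each $e$-block. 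Everything else — the reduction via $\phi$ and $\iota$, the dimension splitting, and the degenerate cases — is routine once the set-up is in place.
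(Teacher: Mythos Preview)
Your fibration framework is a genuinely different route from the paper's: the paper argues dually, showing that any facet $F'$ containing the lifted face $F$ must have a defining inequality that is a scalar multiple of \eqref{eq:lifted_valid_ineq0}; you argue primally by counting $\dim F_{\MC}$ directly via the decomposition $\operatorname{dir}(F_{\MC})=\iota_0(\operatorname{dir}F_{\MP})\oplus\sum_{z\in U}L_z$. The set-up through Step~2 is correct and elegant --- the section $\iota$ and the fibre identification go through, and the directness of the sum follows from $\phi_0\iota_0=\mathrm{id}$ and $L_z\subseteq\ker\phi_0$.

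However, Step~3 is not a proof; it is a description of what remains to be proved, and what remains is the entire technical content of the theorem. Showing that $\sum_{z\in U}L_z$ fills each $e$-block of $\ker\phi_0$ precisely under~\eqref{eq:lifted_facet_necc_suff} requires knowing which sign patterns $(z_I)_{I\in e}$ are realised by vertices of the \emph{facet} $F_{\MP}$ --- not merely by vertices of $\MP^H$ --- and this is a nontrivial structural fact about multilinear-polytope facets. The paper isolates exactly this ingredient as Lemma~\ref{lem:MP_H_proj_full_dim}: for any nonempty $U\subseteq V\setminus V_1^F$, the projection of $F_U=\{z\in F_{\MP}:z_I=0,\ \forall I\in U\}$ onto $\mathbb{R}^{E_U\setminus\{\varnothing\}}$ is full-dimensional. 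Its proof is not routine --- it constructs, by contradiction, a multilinear polynomial identity $(c'\cdot z-\delta')\prod_{I\in U}(1-z_I)=0$ valid on $F_{\MP}$ and derives $U\subseteq V_1^F$. Your edge-block spanning would need this lemma (or an equivalent) in full, and you have not supplied it. The cross-block cancellation you flag is also real: individual $L_z$ are \emph{not} block-diagonal (moving the chosen index within $S_I$ perturbs both the $I$-block and every $e$-block with $I\in e$), so the argument that $\sum_z L_z$ lands block-wise in $\ker\phi_0$ must be made, not asserted. Finally, for necessity the paper gives a short direct argument --- splitting $S_{I_0}=S'_{I_0}\sqcup S''_{I_0}$ and writing the lifted inequality as a sum of two valid, non-proportional lifts --- which is considerably cleaner than extracting ``underspanning'' from your dimension count; you should adopt it. (A small side remark: your claim that $z_I$ constant on $F_{\MP}$ forces $I$ to be isolated is false --- $z_I\geq 0$ is facet-inducing for $\MP^H$ regardless of whether $I$ is isolated --- though this does not affect the main line.)
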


Theorem~\ref{thm:lifted_MP0} specifies all the facets of $\MC^H$ that are lifted from $\MP^H$.
Note that $S_I$ is now restricted to $\varnothing\subset S_I\subset I$ compared to the condition $S_I\subseteq I$ discussed before. This premise ensures that for any $z\in \mathcal{S}^H$, we can construct $w\in \mathscr{S}^H$ such that $w(S_I)=z_I, I\in V$. As a result, inequality~\eqref{eq:lifted_valid_ineq0} is tight (there exists $w\in \MC^H$ that makes the equality hold) and not an implicit equality (there exists $w\in \MC^H$ that makes the inequality strict).
These two properties are necessary for Eq.~\eqref{eq:lifted_valid_ineq0} being facet-inducing.
Moreover,  if $S_{I'}=\varnothing$, then $w(S_{I'})=0$ for $w\in \MC^H$, and Eq.~\eqref{eq:lifted_valid_ineq0} can be viewed as the lifted inequality of $c'\cdot z\leq \delta'$, which is obtained by removing the terms $z_{e}$ for $I'\in e, e\in L(V)\cup E$ from $c\cdot z\leq \delta$ and also valid (but not necessarily facet-inducing). Similarly, if $S_{I'}=I'$, then $w(S_{I'})=1$ for $w\in \MC^H$. Thus
$
\mathscr{L}\left(\prod_{i\in J} w_i \right)\sum_{i\in I'} w_i=w_J
$
is implicit on $\MC^H$, implying that Eq.~\eqref{eq:lifted_valid_ineq0} can be viewed as the inequality obtained by replacing $z_{e}, I\in e$ with $z_{e-I'}$ in $c\cdot z\leq \delta$ and lifting it. Therefore, Theorem~\ref{thm:lifted_MP0} can effectively determine whether lifted inequalities are facet-inducing facets even when there exists $I\in V$ such that $S_I=\varnothing$ or $S_I=I$.

\subsection{Related works}

The study of polytope theory for $0$-$1$ polynomial programming may date back to Padberg's famous Boolean quadric polytope $\mathrm{QP}^G$ \cite{padberg_boolean_1989}, which was defined as the convex hull of the feasible region of linearized unconstrained $0$-$1$ quadratic programming associated with a graph $G(V,E)$, i.e.,
\begin{equation}
\mathrm{QP}^G=\conv\{(x,y)\in \{0,1\}^{V}\times\{0,1\}^{E}|y_{ij}=x_ix_j,\ \forall\, \{i,j\}\in E\},
\end{equation}
where $y_{ij}$ is an alias of $y_e$. Various families of facets as well as lifting properties of $\mathrm{QP}^G$ were studied there under the condition that $G$ is a complete or sparse graph. On the other hand, $\mathrm{QP}^G$ is closely related to the cut polytope \cite{barahona_cut_1986}
via a bijective linear transformation between them \cite{de_simone_cut_1990}. Since then, $\mathrm{QP}^G$ and the cut polytope have been extensively studied due to their fundamental role in Boolean quadratic programming \cite{boros_cut-polytopes_1993, sherali_simultaneous_1995, yajima_polyhedral_1998}. Actually,  the multilinear polytope $\MP^H$ degenerates to $\mathrm{QP}^G$ when hypergraph $H$ reduces to graph $G$,
and a series of studies have been conducted on $\MP^H$ \cite{del_pia_multilinear_2018, del_pia_decomposability_2018, del_pia_polynomial-size_2023}. We will further show that $\MP^H$ is isomorphic to a projection of $\MC^H$ (see Proposition~\ref{prop:MP_H_one_to_one}).


%
%
%
%
%
%
%
%


However, both $\MP^H$ and $\mathrm{QP}^G$ are associated with unconstrained programming problems. Polytopes associated with $0$-$1$ polynomial programming with multiple choice constraints date back to \cite{chopra_partition_1993,deza_clique-web_1992}, and several polytopes related to different kinds of $k$-cut problems of complete graph were proposed at that time. Among them, the $^\leq k$-cut polytope defined in \cite{deza_clique-web_1992} happens to be the image of $\MC^{K_m}$ under a linear transformation since the weighted MAX-$k$-CUT of complete graph $K_m$ can be formulated into MCPP~\eqref{eq:MCPP_pseudo_Bool_def}. (Let $|V|=m$ and $I=\{i^I_1,i^I_2,\dots,i^I_k\}$ for $I\in V$. Note that a $^\leq k$-cut can be expressed by an $x\in \mathcal{X}$, and components of $\chi(S_1,\dots,S_h)$ of \cite{deza_clique-web_1992} can be formulated as $1-\sum_{t=1}^k x_{i^{I_1}_t}x_{i^{I_2}_t}$.) 

%


Very recently, the bipartite Boolean quadric polytope (BQP) with multiple choice constraints \cite{barmann_bipartite_2023}
\begin{equation}
\label{eq:P_G_I_defn}
P(G,\mathcal{I})=\conv\left\{(x,y,z)\in \{0,1\}^{X\cup Y\cup E}\middle|x_iy_j=z_{ij}, \forall\, ij\in E;x\in X^{\mathcal{I}}\right\} 
\end{equation}
was proposed for investigating a variant of bilinear programming 
\begin{subequations}
\label{eq:bilinear_prog_multi_choice}
\begin{align}
\max_{(x,y)\in \{0,1\}^{X\cup Y}} &\quad \sum_{i\in X} a_ix_i+\sum_{j\in Y} a_jy_j+\sum_{\{i,j\}\in E} a_{ij}x_iy_i,\\ 
\text{s.t.} &\quad \sum_{i\in I} x_i\leq 1,\ \forall\, I\in \mathcal{I}, \label{eq:bilinear_prog_multi_choice_1}
\end{align}
\end{subequations}
where $G=(X\cup Y,E)$ is a bipartite graph ($X$ and $Y$ are color classes of $G$), $\mathcal{I}$ is a partition of $X$, and $X^{\mathcal{I}}=\{x\in\{0,1\}^X|\sum_{i\in I} x_i\leq 1,\ \forall I\in \mathcal{I}\}$ is the set of $0$-$1$ vectors that satisfy the multiple choice constraints in Eq.~\eqref{eq:bilinear_prog_multi_choice_1}.
We would like to point out that Eq.~\eqref{eq:bilinear_prog_multi_choice_1} is slightly different from Eq.~\eqref{eq:sum_x_i_1}
for the former are inequalities. Obviously, by introducing $x_{\bar{i}_I}=1-\sum_{i\in I} x_i\in \{0,1\}$ for $I\in \mathcal{I}$, Eq.~\eqref{eq:bilinear_prog_multi_choice_1} becomes $\sum_{i\in I} x_i+x_{\bar{i}_I}=1$, and Problem~\eqref{eq:bilinear_prog_multi_choice} can thus be reformulated as MCPP~\eqref{eq:MCPP_pseudo_Bool_def}.
As a result, when $G$ is subset-uniform, $P(G,\mathcal{I})$ is nothing but a projection of $\MC^H$ (see Example~\ref{eg:BQP_MC_H}) and thus Theorem~\ref{thm:lifted_MP0} can be applied to $P(G,\mathcal{I})$ for achieving some results of \cite{barmann_bipartite_2023}  in a straightforward manner (see  Example~\ref{eg:lifted_barmann})).  

In a word, the proposed $\MC^H$ serves as a polytope associated with general case of $0$-$1$ polynomial programming with (equality) multiple choice constraints and is thus different from those existing polytopes mentioned above. However, those polytopes can be regarded as images of $\MC^{H}$ under linear maps in certain conditions.

\subsection{Organization and convention}

The main task of this paper is to prove Theorems~\ref{thm:alpha_acyclic} and \ref{thm:lifted_MP0}. In Section~\ref{sec:basic}, we give basic properties of $\MC^H$ that are useful for the subsequent investigation. Since $\MC^H$ is not full-dimensional, Section~\ref{sec:affine_hull} provides some techniques to address this deficiency such as considering full-dimensional $\MC^H_\leq$. Then, Section~\ref{sec:decomp} gives a sufficient condition for the decomposability of $\MC^H$ which plays a key role in proving Theorem~\ref{thm:alpha_acyclic}. After that, Sections~\ref{sec:acyclic} and \ref{sec:lifted_MP_H} detail the proofs of Theorems~\ref{thm:alpha_acyclic} and \ref{thm:lifted_MP0}, respectively. Finally, Section~\ref{sec:con} rounds the paper off with some discussions. 

Throughout the rest of this paper, we adopt the following conventions. 
\begin{itemize}
\item Whenever we refer to $\MC^H$, it is assumed that the vertex set of hypergraph $H$ is an partition of $[n]$ and $\MC^H$ is associated with a concrete linearized MCPP instance~\eqref{eq:Linearized_MCPP}.

\item For hypergraphs $H(V,E)$ and $H'(V',E')$, we say that $H$ is a \emph{subhypergraph} of $H'$, denoted by $H\subseteq H'$, if $V\subseteq V'$ and $E\subseteq E'$. 

\item A hypergraph is called \emph{downward-closed}, if for any $e'\subseteq e$ and $e\in E$ with $|e'|>1$, it holds that $e'\in E$.

\item For any finite sets $S$ and $S'\subseteq S$, and $x\in \mathbb{R}^S$, we use $\proj_{S'} x\in\mathbb{R}^{S'}$ to denote the projection mapping that extracts components of $x$ whose indices are in $S'$. It extends to a set $X\subseteq \mathbb{R}^S$, 
\begin{equation}
\proj_{S'} X=\{\proj_{S'} x|x\in X\}.
\end{equation}

\item For any set $S$ and $s \in S$, we use $S-s$ to denote $S\setminus\{s\}$. 

\item We use the shorthand $st$ for an edge $\{s,t\}$ of a graph.

\item We use $\mathscr{L}$ to denote the linearization operator. It applies to a multilinear polynomial (meaning that the polynomial is linear w.r.t. each component) involving $w_i, i\in S$, where $S$ is a finite index set and replaces all the monomials $\prod_{i\in J}w_i$ with $w_{J}$ (after possibly expanding the products). Thus the resulting function is linear to $w_J, J\subseteq S$. For example,
\[
\mathscr{L}(1-w_1)(1-w_2)=1-w_1-w_2+w_{\{1,2\}}.
\]
Note that the entries $J$ are limited in general, e.g., $J\in\mathcal{J}^H$. Hence we have to certify all the monomials every time before applying the operator $\mathscr{L}$. An important property of $\mathscr{L}$ is that the operator does not change the value of the polynomial for any $0$-$1$ vector $w$ satisfying production constraints.

\end{itemize}

\section{Basic properties}
\label{sec:basic}

In this section, we first delineate the subset-uniformity of $\mathcal{J}^H$ and then present some basic properties associated with two kinds of projection of $\MC^H$. Some of the properties are key in the proof of our main theorems.

\subsection{Subset-uniformity of $\mathcal{J}^H$}
\label{subsec:intro_J_H}

%
%

For each nonempty subset $e\subseteq [n]$, let
\begin{equation}
\mathcal{J}^e:=\{J\in \mathcal{J}_\mathscr{P}|E(J)=e\}
\end{equation}
collect all the subsets in $\mathcal{J}_\mathscr{P}$ associated with $e$, and then define 
\begin{equation}
\label{eq:J_H_defn}
\mathcal{J}^H:=\bigcup_{e\in L(V)\cup E}\mathcal{J}^e.
\end{equation}
Noting that $\bigcup_{e\in L(V)} \mathcal{J}^e=L([n])$, the indispensable components $w_i=w_{\{i\}}, i\in [n]$ in Eq.~\eqref{eq:Linearized_MCPP} are covered by $\mathcal{J}^H$.

For any $J\in \mathcal{J}_{\mathscr{P}}$ and $i\in J$, $i,i'\in I$, $I\in V$, it is obvious that $E(J)=E((J - i)\cup\{i'\})$. Since $\mathcal{J}^H$ is consists of groups of $J$s that share the same $E(J)$, we have
\begin{equation}
\label{eq:subset-uniformity}
J\in \mathcal{J}^H\ \text{if and only if}\ (J - i) \cup \{i'\} \in \mathcal{J}^H.
\end{equation}
We call the above property for $\mathcal{J}^H$ \emph{subset-uniformity} w.r.t. $V$. This subset-uniformity is inspired by \cite{barmann_bipartite_2023} and the definitions coincide in the following sense: A bipartite graph $G(X\cup Y,E)$ is subset-uniform w.r.t. the partition $\mathcal{I}$ (of $X$) in the context of \cite{barmann_bipartite_2023} if and only if $E$ is subset-uniform w.r.t. the partition $\mathcal{I}\cup L(Y)$ of $X\cup Y$ in our definition~\eqref{eq:subset-uniformity}. The subset-uniformity allows us to access entries of $w$ by groups of $J\in\mathcal{J}^e$ when $e\in L(V)\cup E$. This is crucial for many formulations of our propositions,  see, e.g., the summation in Eqs.~\eqref{eq:MC_H_lift_add_hyperedge} and \eqref{eq:H_repres_affhull_sym}.
In fact, when some problems with certain symmetry, for instance, MAX-$k$-CUT \cite{shao_ode_2023},  
are rewritten into the MCPP form \eqref{eq:MCPP_pseudo_Bool_def}, the resulting $\mathcal{J}$ is naturally subset-uniform w.r.t. $V$. 
That is, obtaining $\mathcal{J}^H$ from $\mathcal{J}$ only needs adding singleton subsets in $L([n])$.

\subsection{Connections between $\MC^H$ and $\MP^H$}


Arbitrarily choose an $\bar{i}_I\in I$ in order to stand for each $I\in V$ and let $R=\{\bar{i}_I\}_{I\in V}$.
Then there exists a natural one-to-one correspondence between $R$ and $V$, leading to the correspondence between $\mathcal{J}^H|_R:=\{J\in \mathcal{J}^H|J\subseteq R\}$ and $L(V)\cup E$:
\begin{equation}
\label{eq:J_H_R_E_one_to_one}
J\leftrightarrow E(J).
\end{equation}
Accordingly, it is easy to verify that
\begin{equation}
\label{proj0}
\proj_{\mathcal{J}^H|_R} \mathscr{S}^H=\left\{w'\in \{0,1\}^{\mathcal{J}^H|_R} \middle|w'_{J}=\prod_{i\in J} w'_i,\ \forall\, J\in \mathcal{J}^H|_R,|J|>1\right\}.
\end{equation}
Comparing Eqs.~\eqref{eq:multilinear_set} and \eqref{proj0} together, we are able to conclude that there is one-to-one correspondence between $\proj_{\mathcal{J}^H|_R} \mathscr{S}^H$ and $\mathcal{S}^H$. Considering their convex hulls, 
the following connection as stated in Proposition~\ref{prop:MP_H_one_to_one} is achieved. 

\begin{proposition}
\label{prop:MP_H_one_to_one}
Given a hypergraph $H(V,E)$, and $R=\{\bar{i}_I\}_{I\in V}\in \mathcal{J}^V$, 
where $\bar{i}_I\in I$ for $I\in V$, then $\proj_{\mathcal{J}^H|_R}\MC^H$ is in one-to-one correspondence with $\MP^H$, denoted as $w\leftrightarrow z$. The mapping is defined as follows: For all $J\in \mathcal{J}^H|_R$, let $z_{E(J)}=w_J$.
\end{proposition}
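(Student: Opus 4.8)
The plan is to establish the one-to-one correspondence at the level of the $0$-$1$ feasible regions first, and then lift it to the convex hulls by a dimension/affine-independence argument. First I would recall from Eq.~\eqref{proj0} that $\proj_{\mathcal{J}^H|_R}\mathscr{S}^H$ is precisely the set of $0$-$1$ vectors indexed by $\mathcal{J}^H|_R$ satisfying the production constraints $w'_J=\prod_{i\in J}w'_i$ for $|J|>1$, and from Eq.~\eqref{eq:multilinear_set} that $\mathcal{S}^H$ is the set of $0$-$1$ vectors indexed by $L(V)\cup E$ satisfying $z_e=\prod_{I\in e}z_I$. Using the bijection $J\leftrightarrow E(J)$ between $\mathcal{J}^H|_R$ and $L(V)\cup E$ from Eq.~\eqref{eq:J_H_R_E_one_to_one}, the substitution $z_{E(J)}=w_J$ carries one production constraint exactly onto the other (since $i\mapsto\{i\}$, i.e.\ $\bar i_I\mapsto I$, is the underlying vertex correspondence, and $E(J)=\{I : |I\cap J|=1\}=\{\,\{i\} : i\in J\,\}$ when $J\subseteq R$). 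Hence this affine bijection between the ambient spaces $\mathbb{R}^{\mathcal{J}^H|_R}$ and $\mathbb{R}^{L(V)\cup E}$ restricts to a bijection $\proj_{\mathcal{J}^H|_R}\mathscr{S}^H\leftrightarrow\mathcal{S}^H$; this is stated in the excerpt just before the proposition.

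Next I would pass to convex hulls. Since the map $w'\mapsto z$ defined coordinatewise by $z_{E(J)}=w'_J$ is a (invertible) linear map $\Phi$ between the two ambient spaces, it commutes with taking convex hulls: $\Phi(\conv X)=\conv(\Phi(X))$ for any $X$. Applying this with $X=\proj_{\mathcal{J}^H|_R}\mathscr{S}^H$ gives
\begin{equation}
\Phi\bigl(\conv \proj_{\mathcal{J}^H|_R}\mathscr{S}^H\bigr)=\conv\bigl(\Phi(\proj_{\mathcal{J}^H|_R}\mathscr{S}^H)\bigr)=\conv \mathcal{S}^H=\MP^H.
\end{equation}
It remains to identify $\conv\proj_{\mathcal{J}^H|_R}\mathscr{S}^H$ with $\proj_{\mathcal{J}^H|_R}\MC^H$. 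This is the standard fact that projection (a linear map) and convex hull commute: $\proj_{\mathcal{J}^H|_R}\conv\mathscr{S}^H=\conv\proj_{\mathcal{J}^H|_R}\mathscr{S}^H$, so $\proj_{\mathcal{J}^H|_R}\MC^H=\conv\proj_{\mathcal{J}^H|_R}\mathscr{S}^H$. Combining, $\Phi$ restricts to a bijection $\proj_{\mathcal{J}^H|_R}\MC^H\leftrightarrow\MP^H$, which is exactly the asserted correspondence $w\leftrightarrow z$ with $z_{E(J)}=w_J$.

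I do not expect a genuine obstacle here; the statement is essentially bookkeeping once Eqs.~\eqref{proj0} and \eqref{eq:J_H_R_E_one_to_one} are in hand. The one point that deserves care is verifying that the coordinate substitution $z_{E(J)}=w_J$ really does send the production constraint indexed by $J$ to the one indexed by $e=E(J)$ and nothing is lost or duplicated — i.e.\ that $J\mapsto E(J)$ is a well-defined bijection $\mathcal{J}^H|_R\to L(V)\cup E$ that respects the nesting structure $i\in J\Leftrightarrow\{i\}\in E(J)\Leftrightarrow I\in E(J)$ (where $I$ is the block of $i$). This follows from $|J\cap I|\le 1$ together with $J\subseteq R$ and the choice of one representative $\bar i_I$ per block, so $J=\{\bar i_I : I\in E(J)\}$; I would spell this identification out explicitly and then the rest is immediate.
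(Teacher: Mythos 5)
Your proposal is correct and follows essentially the same route as the paper, which likewise identifies $\proj_{\mathcal{J}^H|_R}\mathscr{S}^H$ with $\mathcal{S}^H$ via the index bijection $J\leftrightarrow E(J)$ of Eq.~\eqref{eq:J_H_R_E_one_to_one} and then passes to convex hulls; the paper leaves the commutation of the linear identification and of the projection with $\conv$ implicit, whereas you spell these steps out. No gaps.
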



\subsection{Connections between $\MC^H$ and $\MC^{H'}$ when $H\subseteq H'$}

When $H$ is a subhypergraph of $H'$, it is obvious that $\mathcal{J}^{H}\subseteq \mathcal{J}^{H'}$.  From the definition~\eqref{eq:def_SG_MCMP_set}, it is also evident that $\proj_{\mathcal{J}^{H}} \mathscr{S}^{H'}=\mathscr{S}^{H}$. Considering their convex hulls, we arrive at the following proposition.

\begin{proposition}
\label{prop:proj_subhgraph_MC_H}
Suppose $H$ is a subhypergraph of $H'$, then
\begin{equation}
\proj_{\mathcal{J}^{H}} \MC^{H'}=\MC^{H}.
\end{equation}
\end{proposition}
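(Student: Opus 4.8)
The plan is to prove the set equality $\proj_{\mathcal{J}^{H}} \MC^{H'}=\MC^{H}$ by combining the already-noted facts about the integer sets with the commutation of projection and convex hull. First I would recall the two ingredients supplied just before the statement: the inclusion $H\subseteq H'$ forces $\mathcal{J}^{H}\subseteq \mathcal{J}^{H'}$ (so the projection $\proj_{\mathcal{J}^{H}}$ is well defined on $\mathbb{R}^{\mathcal{J}^{H'}}$), and at the level of $0$-$1$ feasible sets one has the exact identity $\proj_{\mathcal{J}^{H}} \mathscr{S}^{H'}=\mathscr{S}^{H}$. I would prove this last identity carefully if it is not to be taken as given: the ``$\subseteq$'' direction is immediate because any $w'\in\mathscr{S}^{H'}$ restricted to coordinates in $\mathcal{J}^{H}$ still satisfies the multiple choice constraints $w(I)=1$ for $I\in V$ and the production constraints $w_J=\prod_{i\in J}w_i$ for $J\in\mathcal{J}^{H}$; for ``$\supseteq$'', given $w\in\mathscr{S}^{H}$ one extends it to a vector $w'\in\{0,1\}^{\mathcal{J}^{H'}}$ by setting $w'_J=\prod_{i\in J}w_i$ for the extra indices $J\in\mathcal{J}^{H'}\setminus\mathcal{J}^{H}$, which is consistent since the underlying singleton values $w_i=w_{\{i\}}$ are already determined and shared, and the result lies in $\mathscr{S}^{H'}$ and projects back to $w$.

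Next I would invoke the general lemma that linear projection commutes with taking convex hulls: for any finite set $\mathscr{S}'\subseteq\mathbb{R}^{\mathcal{J}^{H'}}$ and any coordinate projection $\proj_{\mathcal{J}^{H}}$, one has $\proj_{\mathcal{J}^{H}}(\conv \mathscr{S}')=\conv(\proj_{\mathcal{J}^{H}}\mathscr{S}')$. This holds because $\proj_{\mathcal{J}^{H}}$ is linear, hence maps convex combinations to convex combinations (giving ``$\subseteq$''), and conversely every point of $\conv(\proj_{\mathcal{J}^{H}}\mathscr{S}')$ is a convex combination $\sum_k\lambda_k \proj_{\mathcal{J}^{H}}s'_k=\proj_{\mathcal{J}^{H}}(\sum_k\lambda_k s'_k)$ of projected vertices, so it is the image of a point of $\conv\mathscr{S}'$ (giving ``$\supseteq$''). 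Applying this with $\mathscr{S}'=\mathscr{S}^{H'}$ and chaining with the set identity of the previous paragraph yields
\begin{equation*}
\proj_{\mathcal{J}^{H}}\MC^{H'}=\proj_{\mathcal{J}^{H}}\conv\mathscr{S}^{H'}=\conv\bigl(\proj_{\mathcal{J}^{H}}\mathscr{S}^{H'}\bigr)=\conv\mathscr{S}^{H}=\MC^{H},
\end{equation*}
which is the claim.

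There is essentially no serious obstacle here; the statement is a routine consequence of ``projection commutes with convex hull'' together with the coordinate bookkeeping $\mathcal{J}^{H}\subseteq\mathcal{J}^{H'}$. The only point deserving a line of care is the surjectivity half of $\proj_{\mathcal{J}^{H}} \mathscr{S}^{H'}=\mathscr{S}^{H}$, i.e. checking that an arbitrary feasible point for $H$ genuinely extends to a feasible point for $H'$ — this uses that the enlarged family $\mathcal{J}^{H'}$ only adds monomials $w_J$ whose values are forced (and consistently so) by the shared singleton coordinates, so no new constraint can obstruct the extension. Once that is observed, the rest is formal.
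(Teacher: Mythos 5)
Your proposal is correct and matches the paper's argument, which likewise observes $\mathcal{J}^H\subseteq\mathcal{J}^{H'}$ and $\proj_{\mathcal{J}^H}\mathscr{S}^{H'}=\mathscr{S}^H$ and then passes to convex hulls; the paper simply leaves the extension step and the hull/projection commutation implicit, while you spell them out. Nothing further is needed.
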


When we add a hyperedge which is contained in another hyperedge into $H$ and the resulting hypergraph is $H'$, there exists an isomorphism 
from $\MC^{H'}$ to $\MC^H$ as shown in Proposition~\ref{prop:add_hyperedge_isomorph}. This result also serves as a technical lemma for proving Theorem~\ref{thm:alpha_acyclic}.



\begin{proposition}
\label{prop:add_hyperedge_isomorph}
Given a hypergraph $H(V,E)$, let $e_1\in E$ and $e_2\subset e_1$ such that $e_2\notin E$ and $|e_2|>1$. Define $H'=(V,E\cup \{e_2\})$. Then, the projection of $\MC^{H'}$ onto $\MC^H$ is a linear isomorphism, and
\begin{equation}
\label{eq:MC_H_lift_add_hyperedge}
\MC^{H'}=\left\{w\in \mathbb{R}^{\mathcal{J}^{H'}}\middle|\proj_{\mathcal{J}^{H}} w\in \MC^H;w_J=\sum_{J'\in \mathcal{J}^{e_1}: J'\supseteq J} w_{J'},\ \forall\, J\in \mathcal{J}^{e_2}\right\}.
\end{equation}
\end{proposition}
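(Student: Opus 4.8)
The plan is to establish the identity~\eqref{eq:MC_H_lift_add_hyperedge} at the level of the $0$-$1$ sets $\mathscr{S}^{H'}$ and $\mathscr{S}^H$ first, then pass to convex hulls, and finally read off the claimed linear isomorphism. Write $Q$ for the right-hand side of~\eqref{eq:MC_H_lift_add_hyperedge}. Since $H\subseteq H'$, Proposition~\ref{prop:proj_subhgraph_MC_H} gives $\proj_{\mathcal{J}^H}\MC^{H'}=\MC^H$, so the projection map $\pi:=\proj_{\mathcal{J}^H}$ is at least surjective from $\MC^{H'}$ onto $\MC^H$; the whole point is that adding $e_2\subset e_1$ introduces no genuinely new coordinates because the variables $w_J$ for $J\in\mathcal{J}^{e_2}$ are already determined by the variables indexed by $\mathcal{J}^{e_1}$. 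Concretely, for a $0$-$1$ point satisfying the production constraints~\eqref{eq:w_J_prod_cons}, if $J\in\mathcal{J}^{e_2}$ then $w_J=\prod_{i\in J}w_i$, and grouping the remaining vertices $I\in e_1\setminus e_2$ and using the multiple choice constraints $w(I)=1$ we get $\prod_{i\in J}w_i=\prod_{i\in J}w_i\cdot\prod_{I\in e_1\setminus e_2}w(I)=\sum_{J'\in\mathcal{J}^{e_1}:\,J'\supseteq J}\prod_{i\in J'}w_i=\sum_{J'\in\mathcal{J}^{e_1}:\,J'\supseteq J}w_{J'}$, where the middle equality is just the distributive expansion and the fact that $J'\in\mathcal{J}^{e_1}$ with $J'\supseteq J$ are exactly the sets of the form $J\cup\bigcup_{I\in e_1\setminus e_2}\{i_I\}$ with $i_I\in I$. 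Hence every $w\in\mathscr{S}^{H'}$ lies in $Q$, and conversely any $w\in Q$ has $\pi(w)\in\mathscr{S}^H$ and the $e_2$-coordinates forced to the correct product values, so $w\in\mathscr{S}^{H'}$; thus $\mathscr{S}^{H'}=Q\cap\{0,1\}^{\mathcal{J}^{H'}}$.

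Next I would upgrade this to the polytope statement. The key observation is that the defining conditions of $Q$ express each coordinate $w_J$, $J\in\mathcal{J}^{e_2}$, as an \emph{affine (in fact linear) function} $\ell_J$ of $\proj_{\mathcal{J}^H}w$: namely $\ell_J(w)=\sum_{J'\in\mathcal{J}^{e_1}:\,J'\supseteq J}w_{J'}$, and every $J'$ appearing here lies in $\mathcal{J}^{e_1}\subseteq\mathcal{J}^H$. Therefore the map $\Phi:\mathbb{R}^{\mathcal{J}^H}\to\mathbb{R}^{\mathcal{J}^{H'}}$ that leaves the $\mathcal{J}^H$-coordinates untouched and sets the $\mathcal{J}^{e_2}$-coordinates equal to these linear forms is a linear injection, $\pi\circ\Phi=\mathrm{id}$, and $Q=\Phi(\mathbb{R}^{\mathcal{J}^H})$ is the graph of $\Phi$ intersected appropriately — more precisely $Q=\Phi\big(\proj_{\mathcal{J}^H}Q\big)$ and $\proj_{\mathcal{J}^H}Q=\MC^H$ is forced once we know $\Phi$ maps $\mathscr{S}^H$ bijectively onto $\mathscr{S}^{H'}$. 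Since $\Phi$ is linear and $\mathscr{S}^{H'}=\Phi(\mathscr{S}^H)$ from the first paragraph, taking convex hulls and using that $\Phi$ (being linear) commutes with $\conv$ yields $\MC^{H'}=\conv\mathscr{S}^{H'}=\Phi(\conv\mathscr{S}^H)=\Phi(\MC^H)$. Then $Q=\Phi(\MC^H)$ follows because: $\Phi(\MC^H)\subseteq Q$ is immediate from the definition of $\Phi$ and $\proj_{\mathcal{J}^H}\MC^H=\MC^H$; and conversely any $w\in Q$ satisfies $\proj_{\mathcal{J}^H}w\in\MC^H$ by hypothesis and $w=\Phi(\proj_{\mathcal{J}^H}w)$ because the $\mathcal{J}^{e_2}$-coordinates of both sides agree by the defining equations of $Q$. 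This proves $\MC^{H'}=Q$, which is~\eqref{eq:MC_H_lift_add_hyperedge}, and simultaneously exhibits $\pi=\proj_{\mathcal{J}^H}$ and $\Phi$ as mutually inverse linear maps between $\MC^{H'}$ and $\MC^H$, i.e. the projection is a linear isomorphism.

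One point that needs care, and which I expect to be the main obstacle, is verifying cleanly that the sets $\{J'\in\mathcal{J}^{e_1}:J'\supseteq J\}$ for $J\in\mathcal{J}^{e_2}$ are exactly indexed by choices of one vertex-element from each $I\in e_1\setminus e_2$, i.e. that the sum $\sum_{J'\supseteq J}w_{J'}$ really does equal $\prod_{i\in J}w_i\prod_{I\in e_1\setminus e_2}w(I)$ after linearization; this uses the subset-uniformity of $\mathcal{J}^H$ w.r.t. $V$ (Eq.~\eqref{eq:subset-uniformity}) to guarantee that all the needed $J'$ actually belong to $\mathcal{J}^{e_1}$, and the condition~\eqref{eq:J_cond} that $|J'\cap I|\le 1$ so that the expansion has no repeated or vanishing terms. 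Once this combinatorial bookkeeping is pinned down, the rest is the formal linear-algebra argument above; no additional difficulty arises since linear maps preserve convex hulls. I would also note explicitly that $e_2\notin E$ is what makes $\mathcal{J}^{H'}\setminus\mathcal{J}^H=\mathcal{J}^{e_2}$ (otherwise the added coordinates would already be present), and $|e_2|>1$ ensures $\mathcal{J}^{e_2}$ corresponds to an actual hyperedge rather than a singleton already in $L(V)$.
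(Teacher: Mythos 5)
Your proposal is correct and follows essentially the same route as the paper: both define the linear lifting map ($\Phi$ in your notation, $\mathcal{T}$ in the paper's) that fills in the $\mathcal{J}^{e_2}$-coordinates via $w_J=\sum_{J'\in\mathcal{J}^{e_1}:J'\supseteq J}w_{J'}$, verify via the telescoping identity $\sum_{J'\supseteq J}\prod_{i\in J'}v_i=\prod_{i\in J}v_i\prod_{I\in e_1\setminus e_2}v(I)=\prod_{i\in J}v_i$ that it bijects $\mathscr{S}^H$ onto $\mathscr{S}^{H'}$, and then pass to convex hulls by linearity. The extra care you take in identifying $Q$ with $\Phi(\MC^H)$ and in the combinatorial bookkeeping of the index sets is sound but not a departure from the paper's argument.
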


\begin{proof}
Define $\mathcal{T}:\MC^{H}\rightarrow \MC^{H'}$ that linearly maps $v$ to $w$ as follows: For any $J\in \mathcal{J}^{e_2}$,
\begin{equation}
w_J=\sum_{J'\in \mathcal{J}^{e_1}: J'\supseteq J} v_{J'};
\end{equation}
and for $J\in \mathcal{J}^{H}$, $w_J=v_J$. Obviously, $\proj_{\mathcal{J}^{H}} w=v$.

When $v\in \mathscr{S}^{H}$, we claim that $w=\mathcal{T}(v)\in \mathscr{S}^{H'}$. It suffices to prove for $J\in \mathcal{J}^{H'} \setminus \mathcal{J}^{H}=\mathcal{J}^{e_2}$,
\begin{equation}
w_J=\prod_{i\in J} w_i.
\end{equation}
It holds since
\begin{subequations}
\begin{align*}
w_J &= \sum_{J'\in \mathcal{J}^{e_1}: J'\supseteq J} v_{J'}\\
	&= \sum_{J'\in \mathcal{J}^{e_1}: J'\supseteq J} \prod_{i\in J'} v_i\\
	&= \prod_{i\in J} v_i \sum_{J'\in \mathcal{J}^{e_1}: J'\supseteq J} \prod_{i\in J'\setminus J} v_i\\
	&= \prod_{i\in J} v_i \prod_{I\in e_1\setminus e_2} \sum_{i\in I} v_i\\
	&= \prod_{i\in J} v_i,
\end{align*}
\end{subequations}
and $w_i=v_i, i\in [n]$. Thus, $\mathcal{T}$ gives a one-to-one correspondence between $\mathscr{S}^H$ and $\mathscr{S}^{H'}$. Moreover, since $\mathcal{T}$ is linear, it it naturally extended to form a one-to-one correspondence between $\MC^H$ and $\MC^{H'}$. Hence, we have
\begin{subequations}
\begin{align}
\MC^{H'} & = \mathcal{T}(\MC^H)\\
&= \left\{w\in \mathbb{R}^{\mathcal{J}^{H'}}\middle|\proj_{\mathcal{J}^{H}} w\in \MC^H;w_J=\sum_{J'\in \mathcal{J}^{e_1}: J'\supseteq J} w_{J'},\ \forall\, J\in \mathcal{J}^{e_2}\right\}.
\end{align}
\end{subequations}
\end{proof}

\section{The affine hull of $\MC^{H}$}
\label{sec:affine_hull}
Given a hypergraph $H(V,E)$ and $D=\{\bar{i}_I\}_{I\in V}\in\mathcal{J}^V$,  
let
\begin{align}
\mathcal{J}^H_\leq(D) & := \mathcal{J}^H|_{[n]\setminus D}=\left\{J\in \mathcal{J}^H\middle|J\subseteq [n]\setminus D\right\}, \label{eq:MCPP_J_leq_def} \\
\MC^H_\leq(D) & := \conv \bigg\{v\in \{0,1\}^{\mathcal{J}^H_\leq(D)} \bigg|
v(I\setminus D)\leq 1,\ \forall\, I\in V; \nonumber \\
&\hspace{11.35em} v_{J}=\prod_{i\in J} v_i,\ \forall\, J\in \mathcal{J}^H_\leq(D), |J|>1\bigg\}. \label{eq:MCPP_MC_leq_def}
\end{align}
When $D$ is clear from the context, we often simply write $\mathcal{J}^H_\leq$ and $\MC^H_\leq$ for $\mathcal{J}^H_\leq(D)$ and $\MC^H_\leq(D)$, respectively.

\begin{example}
\label{eg:MC_H_leq_iso_MP_H}
If $|I|=2$ for all $I\in V$, letting $R=[n]\setminus D$, we have $R\in \mathcal{J}^V$. Then $\mathcal{J}^H_\leq = \mathcal{J}^H|_R$ and $\MC^H_\leq = \proj_{\mathcal{J}^H|_R} \MC^H$. According to Proposition~\ref{prop:MP_H_one_to_one}, we have $\MC^H_\leq$ is in one-to-one correspondence with $\MP^H$.
\end{example}


\begin{example}
\label{eg:BQP_MC_H}
Suppose $G(X\cup Y,E)$ is a subset-uniform bipartite graph in the context of \cite{barmann_bipartite_2023}. Then $P(G,\mathcal{I})$ is actually a special case of $\MC^H_\leq$, which can be verified as follows. 
The subset-uniformity requires $G$ to have the following properties: For any $I\in \mathcal{I}$ and $i_1,i_2\in I$, they share the same neighborhood in $E$. Let $\bar{i}_I, I\in \mathcal{I}\cup L(Y)$ be arbitrary indices other than $X\cup Y$, and define
\begin{align}
V&=\{I\cup \{\bar{i}_I\}\}_{I\in \mathcal{I}\cup L(Y)},
\label{eq:V_from_P_G_I}\\
\hat{E}&=\{I_1I_2|\exists i_1\in I_1, i_2\in I_2\ \text{s.t.}\ i_1i_2\in E\},\label{eq:hat_E_from_P_G_I}\\
D&=\{\bar{i}_I\}_{I\in \mathcal{I}\cup L(Y)}.
\label{eq:D_from_P_G_I}
\end{align}
It is readily seen that $H=(V,\hat{E})$ is isomorphic to $\mathcal{G}=(\mathcal{I}\cup Y,\mathcal{E})$ which is achieved by merging the vertices in each subset of the partition $\mathcal{I}$ into a single vertex. Additionally, $\mathcal{J}^H_\leq(D)=L(X)\cup L(Y)\cup E$. Comparing Eqs.~\eqref{eq:MCPP_MC_leq_def} and \eqref{eq:P_G_I_defn}, we have that $\MC^H_\leq(D)$ is in one-to-one correspondence with $P(G,\mathcal{I})$.
\end{example}

Let $v^U\in\{0,1\}^{\mathcal{J}^H_\leq} $ denote the vector with entries $1$ corresponding to $J\subseteq U $ and $0$ for others. Take $v^U,U\in\{\varnothing\}\cup \mathcal{J}^H_\leq$. It is straightforward to verify that they are $|\mathcal{J}^H_\leq|+1 $ affinely independent vectors of $\MC^H_\leq$, implying: 
\begin{proposition}
\label{prop:MCPP_polytope_full_dim}
$\MC^H_\leq(D)$ is full-dimensional.
\end{proposition}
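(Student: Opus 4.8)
The plan is to exhibit $|\mathcal{J}^H_\leq(D)|+1$ affinely independent points of $\MC^H_\leq(D)$; since $\MC^H_\leq(D)\subseteq\mathbb{R}^{\mathcal{J}^H_\leq(D)}$, this forces $\dim\MC^H_\leq(D)=|\mathcal{J}^H_\leq(D)|$, i.e. full-dimensionality. Write $\mathcal{K}:=\mathcal{J}^H_\leq(D)$ for brevity. The candidate points are the $0$-$1$ vectors $v^U\in\{0,1\}^{\mathcal{K}}$ indexed by $U\in\{\varnothing\}\cup\mathcal{K}$, with entries $v^U_J=\mathbbm{1}[J\subseteq U]$; in particular $v^\varnothing$ is the zero vector.

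First I would verify that each $v^U$ actually lies in the generating $0$-$1$ set of $\MC^H_\leq(D)$ in Eq.~\eqref{eq:MCPP_MC_leq_def}. Since $\bigcup_{e\in L(V)}\mathcal{J}^e=L([n])\subseteq\mathcal{J}^H$, every singleton $\{i\}$ with $i\in[n]\setminus D$ belongs to $\mathcal{K}$, so $v^U_i=\mathbbm{1}[i\in U]$ is well defined for all indices $i$ occurring in some $J\in\mathcal{K}$. The production constraints hold because, for $J\in\mathcal{K}$ with $|J|>1$, both $v^U_J$ and $\prod_{i\in J}v^U_i$ equal $\mathbbm{1}[J\subseteq U]$. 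The multiple choice inequalities hold because $v^U(I\setminus D)=\sum_{i\in I\setminus D}\mathbbm{1}[i\in U]=|U\cap(I\setminus D)|\leq|U\cap I|\leq 1$, where the last bound is Eq.~\eqref{eq:J_cond} applied to $U\in\mathcal{K}\subseteq\mathcal{J}_{\mathscr{P}}$ (and everything is $0$ when $U=\varnothing$). Hence $v^U\in\MC^H_\leq(D)$.

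Next I would prove affine independence. Subtracting $v^\varnothing=0$ reduces the claim to linear independence of $\{v^U\}_{U\in\mathcal{K}}$. Form the $\mathcal{K}\times\mathcal{K}$ matrix $M$ with $M_{U,J}=v^U_J=\mathbbm{1}[J\subseteq U]$, and order both its rows and its columns by a common linear extension of the partial order $\subseteq$ on $\mathcal{K}$ (for instance, sort by nondecreasing cardinality and break ties arbitrarily). Then $M_{U,U}=1$ for every $U$, while $M_{U,J}=0$ whenever the column index $J$ strictly follows the row index $U$ in this order, since $J\subseteq U$ with $J\neq U$ forces $J$ to precede $U$. Thus $M$ is lower triangular with unit diagonal, hence nonsingular, so the $v^U$ ($U\in\mathcal{K}$) are linearly independent. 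Together with $v^\varnothing$ this yields the desired $|\mathcal{K}|+1$ affinely independent points, and the conclusion follows. When the choice of $D$ is clear the same argument applies verbatim to $\MC^H_\leq$.

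I do not expect a genuine obstacle here: the whole argument is routine. The only points that need a moment's care are the membership check — chiefly that the singletons $\{i\}$, $i\in[n]\setminus D$, really lie in $\mathcal{K}$ so the production constraints are meaningful, and that the hypothesis $|I|>1$ plays no role, the relevant bound $|U\cap I|\leq 1$ being built into the definition of $\mathcal{J}_{\mathscr{P}}$ — and the bookkeeping that makes $M$ triangular under the chosen ordering.
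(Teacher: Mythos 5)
Your proof is correct and follows exactly the paper's approach: the paper uses the same vectors $v^U$, $U\in\{\varnothing\}\cup\mathcal{J}^H_\leq$, and leaves the membership and affine-independence checks as "straightforward to verify," which you have carried out correctly (including the triangularity argument under a linear extension of inclusion).
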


Due to the equality constraints $w(I)=1$,  $\MC^H$ is never full-dimensional, and thus its affine hull is strictly contained in $\mathbb{R}^{\mathcal{J}^H}$. With the help of $\MC^H_\leq$, the half-space representation of $\aff\MC^H$ can be reached when $H$ is downward-closed (see Theorem~\ref{thm:MCPP_polytope_aff_MC_H}). Comparing Eqs.~\eqref{eq:MCPP_MC_leq_def} with \eqref{eq:def_SG_MCMP_set}, it is easily seen that $\proj_{\mathcal{J}^H_\leq} \MC^H = \MC^H_\leq$. Denote the restriction of this projection to $\MC^H$ by 
\begin{equation}
\label{proj}
\proj_{\leq}:\MC^H\rightarrow \MC^H_\leq. 
\end{equation}
If $\proj_{\leq}$ is invertible, then it defines a linear isomorphism between $\MC^H$ and the full-dimensional $\MC^H_\leq$. Although this is not always true, 
we are able to give a sufficient condition (see Proposition~\ref{prop:MC_leq_bijection}).
This isomorphism allows us to verify whether a given inequality is facet-inducing for $\MC^H$ in an easier way, see Section~\ref{sec:lifted_MP_H}.

 

\begin{proposition}
\label{prop:MC_leq_bijection}
Suppose $H(V,E)$ is downward-closed. Given any $D\in \mathcal{J}^V$, $\proj_{\leq}$ in Eq.~\eqref{proj} defines a bijection between $\MC^H$ and $\MC^H_\leq$.
\end{proposition}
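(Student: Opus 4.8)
We already know that $\proj_{\leq}$ maps $\MC^H$ onto $\MC^H_\leq$ (this was observed right after Eq.~\eqref{proj}), so the plan is to show it is injective when $H$ is downward-closed; being a surjective linear map between polytopes, injectivity immediately upgrades it to a bijection (indeed a linear isomorphism). The key point is that each coordinate $w_J$ with $J \notin \mathcal{J}^H_\leq$ — i.e. $J$ meeting the ``deleted'' set $D$ — must be expressible as a linear function of the coordinates indexed by $\mathcal{J}^H_\leq$, uniformly over all of $\MC^H$. Since $\MC^H$ is the convex hull of $\mathscr{S}^H$, it suffices to exhibit such a linear identity valid on $\mathscr{S}^H$, because a linear equation holding on a set holds on its convex hull.

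The natural recursion is on $|J \cap D|$. Fix $J \in \mathcal{J}^H$ and suppose $\bar{i}_I \in J \cap D$ for some $I \in V$. By the multiple choice constraint, $w(I) = 1$, so $w_{\bar i_I} = 1 - \sum_{i \in I \setminus D,\, \{i\}\in\mathcal J^H} w_i - \sum_{i \in (D\cap I)-\bar i_I} w_i$; since $D$ picks exactly one representative per $I\in V$, the last sum is empty, giving $w_{\bar i_I} = 1 - w(I\setminus D)$ on $\mathscr{S}^H$. Multiplying the production-constraint identity $w_J = \prod_{i\in J} w_i$ through, one gets, for any $w\in\mathscr S^H$,
\begin{equation*}
w_J \;=\; w_{J - \bar i_I}\bigl(1 - w(I\setminus D)\bigr) \;=\; w_{J-\bar i_I} \;-\; \sum_{i\in I\setminus D} w_{(J-\bar i_I)\cup\{i\}},
\end{equation*}
where I used $w_{J-\bar i_I}\, w_i = w_{(J-\bar i_I)\cup\{i\}}$ on $\mathscr S^H$. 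The decisive use of the hypothesis is here: each index set $(J-\bar i_I)\cup\{i\}$ appearing on the right has $E\bigl((J-\bar i_I)\cup\{i\}\bigr) = E(J) \in L(V)\cup E$, and — because $H$ is downward-closed — any $J'\in\mathcal J_{\mathscr P}$ with $E(J')\subseteq E(J)$ already lies in some $\mathcal J^{e}$ with $e\in L(V)\cup E$, hence in $\mathcal J^H$; in particular $J-\bar i_I$ and all $(J-\bar i_I)\cup\{i\}$ are genuine coordinates of $w$, and each has strictly smaller intersection with $D$ than $J$ does. Iterating the displayed substitution until the index set is disjoint from $D$ expresses $w_J$ as an explicit integer-coefficient linear combination of $\{w_{J'} : J'\in\mathcal J^H_\leq\}$, valid on $\mathscr S^H$ and hence on $\MC^H$.

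Consequently $\proj_{\leq}$ has a (linear) left inverse defined on all of $\MC^H_\leq$ by these formulas, so $\proj_{\leq}$ is injective, and therefore a bijection. The one place that genuinely needs care — and the main obstacle — is verifying that the downward-closed hypothesis is exactly what keeps every intermediate index set inside $\mathcal J^H$: without it, a set like $J-\bar i_I$ could fail to be a coordinate of $w$ (its associated hyperedge $E(J-\bar i_I)$ need not be in $E$), and the recursion would stall. I would state this closure fact as a short auxiliary observation (if $e\in L(V)\cup E$ and $e'\subseteq e$ with $|e'|\ge 1$, then every $J'\in\mathcal J_{\mathscr P}$ with $E(J')=e'$ lies in $\mathcal J^H$) and invoke it at each step. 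Everything else is the routine bookkeeping of expanding the product and collecting terms, which I would not belabor.
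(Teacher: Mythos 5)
Your argument is correct and is essentially the paper's: both construct an affine left inverse of $\proj_\leq$ by substituting $w_{\bar i_I}=1-w(I\setminus D)$ into the production constraints and checking, via downward-closedness, that every monomial produced stays indexed by $\mathcal{J}^H_\leq$, an identity that then passes from $\mathscr{S}^H$ to its convex hull. The only difference is cosmetic — you eliminate the representatives $\bar i_I$ one at a time by recursion on $|J\cap D|$, whereas the paper writes the result in closed form as $w_J=\mathscr{L}\prod_{\bar i_I\in J\cap D}\bigl(1-\sum_{i\in I-\bar i_I}v_i\bigr)\prod_{i\in J\setminus D}v_i$.
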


\begin{proof}
We define $\mathcal{T}:\MC^H_\leq\rightarrow \MC^H$ that maps $v$ to $w$ as follows: 
\begin{equation}
\label{eq:T_mapping_part}
w_J=v_J,\quad\forall\, J\in \mathcal{J}^H_\leq, 
\end{equation}
and for other components,
\begin{equation}
\label{eq:T_mapping_subset}
w_{J} = \mathscr{L} \prod_{\bar{i}_I\in J\cap D}\left(1-\sum_{i\in I-\bar{i}_I} v_i\right)\prod_{i\in J\setminus D} v_i, \quad J \in \mathcal{J}^H\setminus \mathcal{J}^H_\leq.
\end{equation}
Note that any monomial $\prod_{i\in J'} v_i$ in the expansion of
\[
\prod_{\bar{i}_I\in J\cap D}\left(1-\sum_{i\in I-\bar{i}_I} v_i\right)\prod_{i\in J\setminus D} v_i
\]
satisfies $E(J')\subseteq E(J)\in L(V)\cup E$ and $J'\cap D=\varnothing$. Combining the downward-closedness of $H$, we have $J'\in \mathcal{J}^H_\leq$. 
Therefore, $\mathscr{L}$ is well-defined, and so is $\mathcal{T}$.

Now, we verify that $\mathcal{T}$ is the inverse mapping of $\proj_{\leq}$.
It is clear from \eqref{eq:T_mapping_part} that $\proj_{\leq} \circ \mathcal{T}=id$. Next, we prove $\mathcal{T}\circ \proj_\leq=id$. For any $w\in \mathscr{S}^H$, suppose $v=\proj_{\leq} w$, then $v\in \{0,1\}^{\mathcal{J}^H_\leq}$ satisfies the production constraints. Hence, the values of any polynomial w.r.t. $v$ remain unchanged under the linearization operator.

We prove that $\mathcal{T}_J(v)=w_J$ for any $J\in \mathcal{J}^H$. Firstly, for any $J\in \mathcal{J}^H_\leq$, we have
\[
\mathcal{T}_J(v)=v_J=w_J.
\]
Especially, when $i\in [n]\setminus D$, $v_i=w_i$. Secondly, for any $J\in \mathcal{J}^H\setminus \mathcal{J}^H_\leq$, we have
\begin{subequations} \label{eq:v_i_to_w_i}
\begin{align}
\mathcal{T}_J(v)&=\mathscr{L} \prod_{\bar{i}_I\in J\cap D}\left(1-\sum_{i\in I-\bar{i}_I} v_i\right)\prod_{i'\in J\setminus D} v_{i'}\\
&=\prod_{\bar{i}_I\in J\cap D}\left(1-\sum_{i\in I-\bar{i}_I} v_i\right)\prod_{i'\in J\setminus D} v_{i'}.
\end{align}
\end{subequations}
Using the fact that $i\in I-\bar{i}_I\subseteq [n]\setminus D$ and $i'\in J\setminus D\subseteq [n]\setminus D$, we can replace $v_i$ and $v_{i'}$ with $w_i$ and $w_{i'}$, respectively, and obtain
\begin{subequations}
\begin{align}
\text{RHS of Eq.~\eqref{eq:v_i_to_w_i}} &=\prod_{\bar{i}_I\in J\cap D}\left(1-\sum_{i\in I-\bar{i}_I} w_i\right)\prod_{i'\in J\setminus D} w_{i'}\\
&=\prod_{\bar{i}_I\in J\cap D}w_{\bar{i}_I}\prod_{i'\in J\setminus D} w_{i'}\\
&=w_J.
\end{align}
\end{subequations}
Therefore, $\mathcal{T}(\proj_\leq (w))=\mathcal{T}(v)=w$. Combining the fact that $w$ is any vertex of $\MC^H$, and that both $\mathcal{T}$ and $\proj_\leq$ are linear, we conclude that $\mathcal{T}\circ \proj=id$ also holds on $\MC^H$. That is, $\mathcal{T}=\proj_\leq^{-1}$. 
\end{proof}

The projection $\proj_\leq$ in Eq.~\eqref{proj} also defines an isomorphism between each facet of $\MC^H$ and its corresponding facet of $\MC^H_\leq$. Consider a facet-inducing inequality for $\MC^H$, $a\cdot w\leq \delta$, which induces facet $F$. Suppose the nonzero components of $a$ corresponds only to $J\in \mathcal{J}^H_\leq$, then it can be regarded as an inequality in the space of $\MC^H_\leq$, i.e.,
\begin{equation}
\label{eq:valid_ineq_MC_leq0}
\proj_{\mathcal{J}^H_\leq}{a}\cdot v\leq \delta.
\end{equation}
For any $v\in \MC^H_\leq$, we have $\proj_{\mathcal{J}^H_\leq}{a}\cdot v=a\cdot \proj_\leq^{-1}(v)$. Thus Eq.~\eqref{eq:valid_ineq_MC_leq0} is also valid for $\MC^H_\leq$. Moreoever, it induces a facet, $\proj_\leq F$. It is easily seen that the inverse proposition of facet of $\MC^H_\leq$ is also true, leading to Corollary~\ref{corol:facet_MC_H_leq}.

\begin{corollary}
\label{corol:facet_MC_H_leq}
Suppose $H(V,E)$ is downward-closed. $\MC^H$ is defined w.r.t. $D\in \mathcal{J}^V$. Let $a\cdot w\leq \delta$ be a valid inequality for $\MC^H$ such that
\begin{equation}
\{J\in\mathcal{J}^H|a_J\neq 0\}\subseteq \mathcal{J}^H_\leq.
\end{equation}
Then $a\cdot w\leq \delta$ is facet-inducing for $\MC^H$ if and only if $\proj_{\mathcal{J}^H_\leq}a\cdot v\leq \delta$ is facet-inducing for $\MC_\leq^H$.
\end{corollary}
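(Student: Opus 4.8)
The plan is to lift the isomorphism $\proj_\leq$ established in Proposition~\ref{prop:MC_leq_bijection} from polytopes to faces. Under the standing hypothesis that $H$ is downward-closed, we already know $\proj_\leq:\MC^H\to\MC^H_\leq$ is a linear bijection with inverse $\mathcal{T}$. The key preliminary observation, which I would state first, is that if $a\cdot w\leq\delta$ is valid for $\MC^H$ with all nonzero coefficients of $a$ indexed by $\mathcal{J}^H_\leq$, then for any $w\in\MC^H$ we have $a\cdot w=(\proj_{\mathcal{J}^H_\leq}a)\cdot(\proj_\leq w)$, simply because $\mathcal{T}$ (equivalently $\proj_\leq^{-1}$) acts as the identity on the coordinates indexed by $\mathcal{J}^H_\leq$ (Eq.~\eqref{eq:T_mapping_part}). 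Consequently $a\cdot w\leq\delta$ is valid for $\MC^H$ if and only if $(\proj_{\mathcal{J}^H_\leq}a)\cdot v\leq\delta$ is valid for $\MC^H_\leq$, and $\proj_\leq$ carries the face $F=\{w\in\MC^H\mid a\cdot w=\delta\}$ bijectively onto $F_\leq=\{v\in\MC^H_\leq\mid (\proj_{\mathcal{J}^H_\leq}a)\cdot v=\delta\}$, with $\mathcal{T}$ carrying $F_\leq$ back onto $F$.

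Next I would translate ``facet-inducing'' into a dimension statement and transport it across the isomorphism. A valid inequality is facet-inducing for a polytope $P$ exactly when the face it induces has dimension $\dim P - 1$. Since $\proj_\leq$ is an affine bijection of the affine hull of $\MC^H$ onto that of $\MC^H_\leq$ (both being restrictions of a linear map), it preserves affine dimension: $\dim\proj_\leq(S)=\dim S$ for every $S\subseteq\MC^H$. Applying this to $S=\MC^H$ gives $\dim\MC^H=\dim\MC^H_\leq$, and applying it to $S=F$ gives $\dim F=\dim F_\leq$. Therefore $\dim F=\dim\MC^H-1$ if and only if $\dim F_\leq=\dim\MC^H_\leq-1$, which is precisely the assertion that $a\cdot w\leq\delta$ is facet-inducing for $\MC^H$ iff $(\proj_{\mathcal{J}^H_\leq}a)\cdot v\leq\delta$ is facet-inducing for $\MC^H_\leq$. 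I should also note at the outset that $F$ and $F_\leq$ are proper faces: properness of $F$ (i.e.\ $F\neq\MC^H$) is equivalent to properness of $F_\leq$ because the bijection $\proj_\leq$ sends $\MC^H$ onto $\MC^H_\leq$, so no degenerate case arises.

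The argument is essentially a bookkeeping exercise once the isomorphism $\proj_\leq$ is in hand, so I do not expect a genuine obstacle; the one point that deserves care is making explicit why $\proj_\leq$, a priori only defined as a map between the polytopes, extends to (or agrees with) an affine isomorphism of their affine hulls so that the dimension-preservation claim is justified. This follows because $\proj_{\mathcal{J}^H_\leq}$ is a linear map on all of $\mathbb{R}^{\mathcal{J}^H}$ whose restriction to $\MC^H$ is injective with the explicitly linear inverse $\mathcal{T}$; injectivity on the polytope forces injectivity on its affine hull, since $\aff\MC^H$ is spanned by differences of points of $\MC^H$ and $\proj_{\mathcal{J}^H_\leq}$ is linear. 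Having dispatched that, the corollary is immediate from the face-correspondence and dimension-preservation described above, and the proof can be written in a few lines invoking Proposition~\ref{prop:MC_leq_bijection}.
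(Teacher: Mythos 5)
Your proposal is correct and follows essentially the same route as the paper: both arguments use the linear isomorphism $\proj_\leq$ from Proposition~\ref{prop:MC_leq_bijection} (with its explicit linear inverse $\mathcal{T}$) to identify the face induced by $a\cdot w\leq\delta$ on $\MC^H$ with the face induced by $\proj_{\mathcal{J}^H_\leq}a\cdot v\leq\delta$ on $\MC^H_\leq$, via the identity $a\cdot w=(\proj_{\mathcal{J}^H_\leq}a)\cdot(\proj_\leq w)$, and then transport facetness across the isomorphism. Your write-up is in fact somewhat more careful than the paper's (which leaves the dimension-preservation and the converse direction to the reader), and no gap remains.
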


Proposition~\ref{prop:MCPP_polytope_full_dim} implies $\aff \MC_{\leq }^H=\mathbb{R}^{\mathcal{J}^H_\leq }$. The inverse linear mapping $\proj_\leq^{-1}$ defined above can be naturally extended to a linear map $\tilde{\mathcal{T}}$ on $\mathbb{R}^{\mathcal{J}^H_\leq}$ such that its definition is still consistent with Eqs.~\eqref{eq:T_mapping_part} and \eqref{eq:T_mapping_subset}. 
In consequence, we have 
 \begin{equation}
\label{3.16}
\tilde{\mathcal{T}}(\mathbb{R}^{\mathcal{J}^H_\leq })=\tilde{\mathcal{T}}(\aff \MC_{\leq }^H)=\aff \tilde{\mathcal{T}}(\MC_{\leq }^H)=\aff \MC^H.
\end{equation}
From this, the half-space representation of $\aff \MC^H$ can be obtained.

\begin{theorem}
\label{thm:MCPP_polytope_aff_MC_H}
Suppose $H(V,E)$ is downward-closed, and $D=\{\bar{i}_I\}_{I\in V}\in \mathcal{J}^V$, then
\begin{equation}
\label{eq:H_repres_affhull}
\aff \MC^H=\left\{w\in \mathbb{R}^{\mathcal{J}^H}\middle| w_{J} = \mathscr{L} \prod_{\bar{i}_I\in J\cap D}\left(1-\sum_{i\in I-\bar{i}_I} w_i\right)\prod_{i\in J\setminus D} w_i, \ \forall\, J \in \mathcal{J}^H\setminus \mathcal{J}^H_\leq\right\}.
\end{equation}
\end{theorem}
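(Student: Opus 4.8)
The plan is to recognize the right-hand side of \eqref{eq:H_repres_affhull} as exactly the image $\tilde{\mathcal{T}}(\mathbb{R}^{\mathcal{J}^H_\leq})$ and then invoke \eqref{3.16}. First I would record the structural observation that this right-hand side is an \emph{affine} subspace of $\mathbb{R}^{\mathcal{J}^H}$: once the operator $\mathscr{L}$ has been applied, each defining relation $w_{J} = \mathscr{L}\prod_{\bar{i}_I\in J\cap D}(1-\sum_{i\in I-\bar{i}_I} w_i)\prod_{i\in J\setminus D} w_i$ is affine-linear in the coordinates of $w$, since every product $\prod_{i\in J'}w_i$ arising in the expansion has been collapsed to a single variable $w_{J'}$. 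As already checked in the proof of Proposition~\ref{prop:MC_leq_bijection}, downward-closedness of $H$ guarantees $J'\in\mathcal{J}^H_\leq$ for every such $J'$, so $\mathscr{L}$ is well defined here; moreover every variable occurring on a right-hand side carries an index in $\mathcal{J}^H_\leq$ (in particular every $w_i$ with $i\in I-\bar{i}_I$ or $i\in J\setminus D$ has $i\notin D$).

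Next I would spell out the identification with a graph. A vector $w\in\mathbb{R}^{\mathcal{J}^H}$ lies in the right-hand side of \eqref{eq:H_repres_affhull} if and only if $w=\tilde{\mathcal{T}}(\proj_{\mathcal{J}^H_\leq}w)$: indeed $\tilde{\mathcal{T}}$ fixes the $\mathcal{J}^H_\leq$-coordinates by \eqref{eq:T_mapping_part}, so if $w=\tilde{\mathcal{T}}(v)$ for some $v$ then necessarily $v=\proj_{\mathcal{J}^H_\leq}w$, and conversely the remaining coordinates of $\tilde{\mathcal{T}}(\proj_{\mathcal{J}^H_\leq}w)$ are exactly those prescribed by \eqref{eq:T_mapping_subset}, which is the defining relation of the right-hand side. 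Hence the right-hand side of \eqref{eq:H_repres_affhull} equals $\{\tilde{\mathcal{T}}(v)\mid v\in\mathbb{R}^{\mathcal{J}^H_\leq}\}=\tilde{\mathcal{T}}(\mathbb{R}^{\mathcal{J}^H_\leq})$, and \eqref{3.16} identifies the latter with $\aff\MC^H$, which is the claim.

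If a more self-contained argument is preferred I would instead prove the two inclusions directly. For $\aff\MC^H\subseteq(\text{RHS})$, since the RHS is an affine subspace it suffices to check that every vertex $w\in\mathscr{S}^H$ satisfies the relations; for such $0$-$1$ vectors the multiple choice constraint $w(I)=1$ gives $1-\sum_{i\in I-\bar{i}_I}w_i=w_{\bar{i}_I}$, so the unlinearized product equals $\prod_{i\in J}w_i=w_J$, and $\mathscr{L}$ does not alter the value at a point satisfying the production constraints. For the reverse inclusion one can either appeal to the previous paragraph or count dimensions: the RHS is parametrized freely and bijectively by its $\mathcal{J}^H_\leq$-coordinates, so $\dim(\text{RHS})=|\mathcal{J}^H_\leq|=\dim\MC^H_\leq=\dim\MC^H$ by Propositions~\ref{prop:MCPP_polytope_full_dim} and \ref{prop:MC_leq_bijection}, and an affine subspace containing $\aff\MC^H$ of the same dimension must coincide with it.

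There is no genuine obstacle here: everything substantive — the bijection $\proj_\leq$, its explicit inverse $\mathcal{T}$, and the identity \eqref{3.16} — is already in hand. The only points that need care are the bookkeeping that each monomial produced by $\mathscr{L}$ indeed indexes a coordinate in $\mathcal{J}^H_\leq$ (precisely where downward-closedness enters, and already verified in the proof of Proposition~\ref{prop:MC_leq_bijection}) and the remark that the linearized relations are affine in $w$, which is what legitimizes reducing the first inclusion to checking vertices. Consequently the proof is short.
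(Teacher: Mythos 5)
Your proposal is correct and follows essentially the same route as the paper: identify the right-hand side of \eqref{eq:H_repres_affhull} with the graph of the affine extension $\tilde{\mathcal{T}}$ of $\proj_\leq^{-1}$, i.e.\ with $\tilde{\mathcal{T}}(\mathbb{R}^{\mathcal{J}^H_\leq})$, and conclude via \eqref{3.16}. The additional self-contained variant (checking vertices plus a dimension count) is a sound but unnecessary alternative.
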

\begin{proof}
We define $\tilde{\mathcal{T}}:\mathbb{R}^{\mathcal{J}^H_\leq}\rightarrow\mathbb{R}^{\mathcal{J}^H}$ that maps $v$ to $w$ as
\begin{equation}
w_J=v_J,\quad\forall\, J\in \mathcal{J}^H_\leq, 
\end{equation}
and for other components,
\begin{equation}
\label{eq:tilde_T_mapping_subset}
w_{J} = \mathscr{L} \prod_{\bar{i}_I\in J\cap D}\left(1-\sum_{i\in I-\bar{i}_I} v_i\right)\prod_{i\in J\setminus D} v_i, \quad J \in \mathcal{J}^H\setminus \mathcal{J}^H_\leq.
\end{equation}
Since the expressions are the same to Eqs.~\eqref{eq:T_mapping_part} and \eqref{eq:T_mapping_subset}, $\tilde{\mathcal{T}}$ is extension of $\mathcal{T}$ defined in the proof of Proposition~\ref{prop:MC_leq_bijection}, and the RHS of Eq.~\eqref{eq:tilde_T_mapping_subset} is linear w.r.t. components of $w$ corresponding to indices in $\mathcal{J}^H_\leq$. Thus $\tilde{\mathcal{T}}(\MC_{\leq }^H)=\mathcal{T}(\MC^H_\leq)=\MC^H$, and
Eq.~\eqref{3.16} holds. 
Substituting $v_{J'}$ by $w_{J'}$ for all $J'\in \mathcal{J}^H_\leq$ in Eq.~\eqref{eq:tilde_T_mapping_subset}, 
we obtain
\begin{equation}
\tilde{\mathcal{T}}(\mathbb{R}^{\mathcal{J}^H_\leq })=
\left\{w\in \mathbb{R}^{\mathcal{J}^H}\middle|w_{J} = \mathscr{L} \prod_{\bar{i}_I\in J\cap D}\left(1-\sum_{i\in I-\bar{i}_I} w_i\right)\prod_{i\in J\setminus D} w_i, \ \forall\, J \in \mathcal{J}^H\setminus \mathcal{J}^H_\leq\right\}.
\end{equation}
The proof is thus completed using Eq.~\eqref{3.16}. 
\end{proof}

Note that the RHS of Eq.~\eqref{eq:H_repres_affhull} depends on $D$ while $\aff\MC^H$ is independent of $D$. In fact, there exists an equivalent half-space representation of $\aff\MC^H$ which remains unchanged for any permutation within $I\in V$. The following corollary can be verified from Theorem.~\ref{thm:MCPP_polytope_aff_MC_H}.
\begin{corollary}
Suppose $H(V,E)$ is downward-closed, then
\begin{equation}
\label{eq:H_repres_affhull_sym}
\begin{aligned}
\aff \MC^H=
&\bigg\{w\in \mathbb{R}^{\mathcal{J}^H}\bigg|w(I)=1,\ \forall\, I\in V;\\
& w_J=\sum_{J'\in \mathcal{J}^{e'}: J'\supset J}w_{J'},\ \forall\, e,e'\in L(V)\cup E, e\subset e',|e'|=|e|+1,\forall\, J\in \mathcal{J}^e \bigg\}.
\end{aligned}
\end{equation}
\end{corollary}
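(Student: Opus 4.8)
The corollary asserts an equivalent half-space representation of $\aff\MC^H$ that is ``symmetric'' — independent of the choice of representatives $D$ — obtained by replacing the explicit linearized products in Theorem~\ref{thm:MCPP_polytope_aff_MC_H} with a system of "telescoping" equations linking adjacent hyperedges $e\subset e'$ with $|e'|=|e|+1$.

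\medskip

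The plan is to prove the two affine spaces coincide by showing mutual inclusion, using Theorem~\ref{thm:MCPP_polytope_aff_MC_H} as the known description. Denote by $A$ the right-hand side of Eq.~\eqref{eq:H_repres_affhull_sym} and recall that $\aff\MC^H$ equals the right-hand side of Eq.~\eqref{eq:H_repres_affhull}. For the inclusion $\aff\MC^H\subseteq A$: since every vertex $w\in\mathscr{S}^H$ is a $0$-$1$ vector satisfying the production and multiple-choice constraints, it clearly satisfies $w(I)=1$ and, for $e\subset e'$ with $|e'|=|e|+1$ (so $e'=e\cup\{I_0\}$ for a single $I_0\in V$) and $J\in\mathcal{J}^e$, the identity $\prod_{i\in J}w_i = \prod_{i\in J}w_i\cdot\sum_{i\in I_0}w_i = \sum_{J'\in\mathcal{J}^{e'}:J'\supset J}\prod_{i\in J'}w_i$, which after linearization is exactly $w_J=\sum_{J'\in\mathcal{J}^{e'}:J'\supset J}w_{J'}$. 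Since these are linear equalities satisfied by all vertices of $\MC^H$, they hold on $\aff\MC^H$, so $\aff\MC^H\subseteq A$.

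\medskip

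For the reverse inclusion $A\subseteq\aff\MC^H$, take $w\in A$ and verify it satisfies the defining equations of Eq.~\eqref{eq:H_repres_affhull}, i.e.\ for each $J\in\mathcal{J}^H\setminus\mathcal{J}^H_\leq$ (meaning $J\cap D\neq\varnothing$), the relation $w_J = \mathscr{L}\prod_{\bar i_I\in J\cap D}\bigl(1-\sum_{i\in I-\bar i_I}w_i\bigr)\prod_{i\in J\setminus D}w_i$. The idea is induction on $|J\cap D|$. The base case $|J\cap D|=0$ is trivial. For the inductive step, pick $\bar i_{I_0}\in J\cap D$, set $e=E(J)$, $e'=e$, $\tilde e = e\setminus\{I_0\}$; using $w(I_0)=1$ rewrite $w_{\bar i_{I_0}}=1-\sum_{i\in I_0-\bar i_{I_0}}w_i$ and use the telescoping equation of Eq.~\eqref{eq:H_repres_affhull_sym} with the pair $\tilde e\subset e$ (which has $|e|=|\tilde e|+1$) applied to $J':=J-\bar i_{I_0}\cup\{i'\}$ for each $i'\in I_0$; summing appropriately expresses $w_J$ in terms of components indexed by subsets with strictly smaller intersection with $D$ after substituting one $\bar i_{I_0}$-variable, then apply the induction hypothesis to each such term and reassemble. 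One must check at each stage that all index sets produced lie in $\mathcal{J}^H$, which follows from subset-uniformity of $\mathcal{J}^H$ w.r.t.\ $V$ (Eq.~\eqref{eq:subset-uniformity}) together with downward-closedness of $H$, so the linearization operator $\mathscr{L}$ is well-defined throughout; and that the chain $\tilde e\subset \tilde e\cup\{I_j\}\subset\cdots$ connecting $\tilde e$ to any superset is available, which holds because $H$ downward-closed makes $L(V)\cup E$ closed under removing one vertex from any hyperedge of size $>1$.

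\medskip

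The main obstacle I anticipate is the bookkeeping in the reverse inclusion: converting the single-step ``adjacent hyperedge'' equations into the fully-expanded product formula requires iterating the telescoping relation once per element of $J\cap D$ and once per vertex removed from each such $I$, and keeping track that the coefficients reassemble exactly into the expansion of $\prod_{\bar i_I\in J\cap D}(1-\sum_{i\in I-\bar i_I}w_i)\prod_{i\in J\setminus D}w_i$ after linearization. A clean way to organize this is to prove the stronger intermediate claim that, for every $e\in L(V)\cup E$ and every $J\in\mathcal{J}^e$, and every $e_*\subseteq e$, one has $w_J = \sum_{J'\in\mathcal{J}^{e_*}... }$-type identities by downward induction on $|e_*|$, i.e.\ first establish that $A$ forces $w_J=\sum_{J'\in\mathcal{J}^{e'}:J'\supseteq J_0}w_{J'}$ for \emph{all} pairs $e\subset e'$ in $L(V)\cup E$ (not just those differing by one vertex) by composing the one-step relations along a chain, and then that it forces the product formula by expanding $\prod_{\bar i_I\in J\cap D}w_{\bar i_I} = \prod_{\bar i_I\in J\cap D}(1-\sum_{i\in I-\bar i_I}w_i)$ and matching terms. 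The rest is routine, so I will only sketch the reassembly and leave the verification that all intermediate index sets remain in $\mathcal{J}^H$ to the reader, it being an immediate consequence of subset-uniformity and downward-closedness.
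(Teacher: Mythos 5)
Your proposal is correct and follows exactly the route the paper indicates: the paper omits the proof, stating only that the corollary ``can be verified from Theorem~\ref{thm:MCPP_polytope_aff_MC_H},'' and your mutual-inclusion argument does precisely that. The key inductive step does reassemble cleanly --- writing $w_J = w_{J-\bar i_{I_0}} - \sum_{i'\in I_0-\bar i_{I_0}} w_{(J-\bar i_{I_0})\cup\{i'\}}$ via the one-step relation for the pair $(E(J)-I_0,\,E(J))$ together with $w(I_0)=1$, and applying the induction hypothesis on $|J\cap D|$ to each term, yields exactly $\mathscr{L}\prod_{\bar i_I\in J\cap D}(1-\sum_{i\in I-\bar i_I}w_i)\prod_{i\in J\setminus D}w_i$ by linearity of $\mathscr{L}$ --- so the deferred bookkeeping is indeed routine.
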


\section{Decomposability of $\MC^{H}$}
\label{sec:decomp}
In this section, we provide a sufficient condition for the decomposability of the MCPP polytope $\MC^{H}$. Let $H_1(V_1,E_1)$ and $H_2(V_2,E_2)$ be subhypergraphs of $H$ such that $H_1\cup H_2=H$, where $H_1\cup H_2$ is defined as $(V_1\cup V_2,E_1\cup E_2)$. We say that $\MC^H$ is decomposable into $\MC^{H_1}$ and $\MC^{H_2}$ if
\begin{equation}
\label{eq:MCPP_polytope_decomp_def}
\MC^H=\overline{\MC}^{H_1}\cap \overline{\MC}^{H_2},
\end{equation}
where the polyhedron $\overline{\MC}^{H_k}$ collects all the points in $\mathbb{R}^{\mathcal{J}^H}$ whose projections on $\mathbb{R}^{\mathcal{J}^{H_k}}$ belong to $\MC^{H_k}$, $k=1, 2$. 
Namely,
\begin{equation}
\overline{\MC}^{H_k}=\left\{w\in \mathbb{R}^{\mathcal{J}^{H}}\middle|\proj_{\mathcal{J}^{H_k}}w\in \MC^{H_k}\right\},\quad k=1,2.
\end{equation}
The decomposability in Eq.~\eqref{eq:MCPP_polytope_decomp_def} implies that the half-space representation of $\MC^H$ is the union of the  half-space representations of $\MC^{H_1}$ and $\MC^{H_2}$. Therefore, the study of $\MC^H$ can be transferred to that of smaller $\MC^{H_k}$. In the rest of this paper, for $w\in \mathbb{R}^{\mathcal{J}^H}$ and $H'$ being a subhypergraph of $H$, we shortly denote $\proj_{H'} w=\proj_{\mathcal{J}^{H'}}w$.


\begin{example}
\label{eg:simple_decomp}
A natural sufficient condition for the decomposability is $V_1\cap V_2=\varnothing$, in which any $J_1\in \mathcal{J}^{H_1}$ and $J_2\in \mathcal{J}^{H_2}$ are disjoint. In the definition of $\mathscr{S}^H$ in Eq.~\eqref{eq:def_SG_MCMP_set}, all production constraints $w_{J}=\prod_{i\in J} w_i$ are partitioned into two unrelated groups. Hence
\begin{equation}
\mathscr{S}^H=\left\{w\in \mathbb{R}^{\mathcal{J}^{H}}\middle|\proj_{H_k}w\in \mathscr{S}^{H_k},\, k=1,2\right\}.
\end{equation}
Then it is easy to see that $\MC^H=\overline{\MC}^{H_1}\cap \overline{\MC}^{H_2}$.
\end{example}


Due to the redundancy or non-full-dimensionality inside $\MC^H$ discussed in the previous section, 
the conditions for the decomposability may be weaker than that for $\MP^H$ \cite{del_pia_decomposability_2018}.

\begin{theorem}
\label{thm:decomp}
Suppose the hypergraphs $H(V,E)$, $H_1(V_1,E_1)$ and $H_2(V_2,E_2)$ satisfy $H=H_1\cup H_2$. If $V_1\cap V_2\in (L(V_1)\cup E_1)\cap (L(V_2)\cup E_2)$, then
\begin{equation}
\MC^H=\overline{\MC}^{H_1}\cap \overline{\MC}^{H_2}.
\end{equation}
\end{theorem}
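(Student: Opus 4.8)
The plan is to prove the two inclusions separately, with the nontrivial direction being $\overline{\MC}^{H_1}\cap\overline{\MC}^{H_2}\subseteq\MC^H$. The easy direction $\MC^H\subseteq\overline{\MC}^{H_1}\cap\overline{\MC}^{H_2}$ follows immediately from Proposition~\ref{prop:proj_subhgraph_MC_H}: since $H_1,H_2\subseteq H$, any $w\in\MC^H$ satisfies $\proj_{H_k}w\in\MC^{H_k}$, $k=1,2$. For the reverse inclusion, first I would reduce to the case that $H$ is downward-closed, or at least handle the bookkeeping of $\mathcal{J}^H$ vs.\ $\mathcal{J}^{H_1}\cup\mathcal{J}^{H_2}$; the point of the hypothesis $e^\ast:=V_1\cap V_2\in(L(V_1)\cup E_1)\cap(L(V_2)\cup E_2)$ is precisely that the ``shared'' part of the two index families is $\mathcal{J}^{e^\ast}$, a single block on which both $\MC^{H_1}$ and $\MC^{H_2}$ induce the same coordinates, so that $\mathcal{J}^H=\mathcal{J}^{H_1}\cup\mathcal{J}^{H_2}$ and $\mathcal{J}^{H_1}\cap\mathcal{J}^{H_2}=\mathcal{J}^{e^\ast}$ (together with the singletons $L([n])$).

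The core step is a glueing/extension argument at the level of vertices. Take $w\in\overline{\MC}^{H_1}\cap\overline{\MC}^{H_2}$; then $w^{(1)}:=\proj_{H_1}w\in\MC^{H_1}$ and $w^{(2)}:=\proj_{H_2}w\in\MC^{H_2}$, and they agree on the common block $\mathcal{J}^{e^\ast}$ (and on $L([n])$). Write each as a convex combination of vertices, $w^{(k)}=\sum_t\lambda^{(k)}_t v^{(k)}_t$ with $v^{(k)}_t\in\mathscr{S}^{H_k}$. The key observation is that on the block $e^\ast$ the coordinate vector $\proj_{\mathcal{J}^{e^\ast}}w$ already encodes a probability distribution over the ``local states'' on $e^\ast$ (because $e^\ast$ is a single hyperedge or vertex, $\MC^{e^\ast}$ is a simplex whose vertices are in bijection with the $0$-$1$ assignments picking one index per $I\in e^\ast$ consistent with the multiple-choice constraints). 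Conditioning each of the two convex decompositions on these local states, one gets, for every local state $s$ occurring with positive mass $\mu_s$, conditional distributions on $\mathscr{S}^{H_1}$ and on $\mathscr{S}^{H_2}$ whose restrictions to $e^\ast$ are both the point mass at $s$. Forming the product of these two conditional distributions over the disjoint remaining coordinates $\mathcal{J}^{H_1}\setminus\mathcal{J}^{e^\ast}$ and $\mathcal{J}^{H_2}\setminus\mathcal{J}^{e^\ast}$ — which is legitimate because, given the state on $V_1\cap V_2$, the constraints defining $\mathscr{S}^{H_1}$ and $\mathscr{S}^{H_2}$ involve disjoint sets of $x$-variables outside $V_1\cap V_2$, exactly as in Example~\ref{eg:simple_decomp} — produces a distribution supported on $\mathscr{S}^H$. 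Averaging over $s$ with weights $\mu_s$ yields a convex combination of points of $\mathscr{S}^H$; I would then check that its coordinates coincide with $w$, using that $w$ is determined on $\mathcal{J}^{H_1}$ by $w^{(1)}$, on $\mathcal{J}^{H_2}$ by $w^{(2)}$, and these are consistent. Hence $w\in\MC^H$.

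An alternative, perhaps cleaner, route avoids vertices and works with the affine hull: by Theorem~\ref{thm:MCPP_polytope_aff_MC_H} (assuming downward-closedness, or reducing to it via Proposition~\ref{prop:add_hyperedge_isomorph}) the coordinates $w_J$ for $J\notin\mathcal{J}^{H_k}_\leq$ are affine functions of the ``free'' coordinates of $\MC^{H_k}_\leq$; one shows the free coordinates of $\MC^{H}_\leq$ split, modulo the shared block, into the free coordinates of $\MC^{H_1}_\leq$ and $\MC^{H_2}_\leq$, and that the product-measure construction above realizes any consistent pair. I expect the main obstacle to be the measure-theoretic (really finite-combinatorial) glueing: verifying carefully that the conditional-then-product-then-average distribution lands in $\mathscr{S}^H$ and reproduces $w$ exactly, i.e.\ that no production constraint $w_J=\prod_{i\in J}w_i$ with $E(J)$ straddling both $H_1$ and $H_2$ is violated. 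But the hypothesis $V_1\cap V_2\in(L(V_1)\cup E_1)\cap(L(V_2)\cup E_2)$ forbids exactly such straddling $J$ (any $J\in\mathcal{J}^H$ has $E(J)\in L(V)\cup E=L(V_1\cup V_2)\cup E_1\cup E_2$, hence $E(J)\subseteq V_1$ or $E(J)\subseteq V_2$), so this obstacle should dissolve once the setup is in place; the remaining work is routine.
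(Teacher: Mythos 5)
Your proposal is correct and follows essentially the same route as the paper: both directions are handled identically, and the key step --- writing $\proj_{H_1}w$ and $\proj_{H_2}w$ as convex combinations of vertices of $\mathscr{S}^{H_1}$ and $\mathscr{S}^{H_2}$, observing that on the shared block $\mathcal{J}^{V_1\cap V_2}$ each vertex is the indicator of a single local state so the two decompositions can be coupled state by state, and gluing the matched vertices into points of $\mathscr{S}^H$ --- is exactly the paper's ``vertices matching'' argument. The only cosmetic difference is that you realize the coupling via conditional product distributions over each local state, whereas the paper puts both decompositions over a common rational denominator $\tfrac{1}{s}\sum_{\ell}u^{k,\ell}$ and rearranges the lists; your variant has the minor advantage of not needing rationality of the vertices of $\overline{\MC}^{H_1}\cap\overline{\MC}^{H_2}$.
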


\begin{proof}
Take any $w\in \mathscr{S}^{H}$. It is easily seen that
\[
\proj_{H_k} w\in \mathscr{S}^{H_k}\subset \MC^{H_k},\ k=1,2,
\]
implying $w\in \overline{\MC}^{H_1}\cap \overline{\MC}^{H_2}$. By the arbitrariness of $w$, we have
\begin{equation}
\mathscr{S}^{H}\subseteq \overline{\MC}^{H_1}\cap \overline{\MC}^{H_2}.
\end{equation}
As the RHS of the above equation is convex, we can take the convex hull on the LHS, yielding
\begin{equation}
\MC^H\subseteq \overline{\MC}^{H_1}\cap \overline{\MC}^{H_2}.
\end{equation}

Now it suffices to prove the inverse inclusion.
Since $\mathcal{J}^H=\mathcal{J}^{H_1}\cup\mathcal{J}^{H_2}$, it is easily shown that $\overline{\MC}^{H_1}\cap \overline{\MC}^{H_2}$ is bounded and thus a polytope. 
Suppose $w$ is a vertex of $\overline{\MC}^{H_1}\cap \overline{\MC}^{H_2}$, and let $u^k=\proj_{H_k} w$, $k=1,2$. Since both $\overline{\MC}^{H_1}$ and $\overline{\MC}^{H_2}$ are rational polyhedra, $w$ is also rational. 
Then, by the definition of $\overline{\MC}^{H_k}$, we know that $u^k\in \MC^{H_k}$, so it can be expressed as convex combination of points in $\mathscr{S}^{H_k}$. Since $u^k$ and all points in $\mathscr{S}^{H_k}$ are rational, the convex combination can be written as follows without loss of generality:
\begin{equation}
u^k=\frac{1}{s_k}\sum_{\ell=1}^{s_k} u^{k,\ell},
\end{equation}
where $u^{k,i}\in \mathscr{S}^{H_k},i=1,2,\dots,s_k$. Furthermore, by repeating $u^{k,\ell}$, we can assume without loss of generality that $s_1=s_2=:s$. Let $p=V_1\cap V_2$, and due to the condition, $p\in (L(V_1)\cup E_1)\cap (L(V_2)\cup E_2)$. For any $J\in \mathcal{J}^p$, we have $u^1_{J}=u^2_J=w_J$, and combining that $u^{k,\ell}_J\in\{0,1\}$, we obtain
\begin{equation}
sw_J=su^k_J=\sum_{\ell=1}^s u^{k,\ell}_J=|\{\ell\in [s]|u^{k,\ell}_J=1\}|,\quad k=1, 2.
\end{equation}
Since $\sum_{J\in \mathcal{J}^p} u^{k,\ell}_J=1$, the subsets of $[s]$, $\{\ell\in [s]|u^{k,\ell}_J=1\}$, form a disjoint partion of $[s]$ over $J$ in $\mathcal{J}^p$. 
Note that both of $|\{\ell\in [s]|u^{k,\ell}_J=1\}|$ are equal to $sw_J$. Thus, we can rearrange $\{u^{1,\ell}\}_{\ell\in [s]}$ and assume that $u^{1,\ell}_J=u^{2,\ell}_J,\forall\, J\in \mathcal{J}^p,\ell=1,2,\dots,s$. Then, by the properties of $\mathscr{S}^{H_k}$, for any $i\in \bigcup_{I\in p} I$, we have
\begin{equation}
u^{k,\ell}_i=\sum_{J\in \mathcal{J}^p: J\ni i}u^{k,\ell}_J,
\end{equation}
so $u^{1,\ell}_i=u^{2,\ell}_i$. Moreover, for any $J\in \mathcal{J}^{H_1}\cap \mathcal{J}^{H_2}\subseteq
\mathcal{J}^p$, since $J\subseteq \bigcup_{I\in p} I$, we have
\begin{equation}
u^{1,\ell}_J=\prod_{i\in J}u^{1,\ell}_i=\prod_{i\in J}u^{2,\ell}_i=u^{2,\ell}_J.
\end{equation}
Therefore, we can naturally combine $u^{1,\ell}$ and $u^{2,\ell}$ to form $w^\ell\in \mathscr{S}^H$ for $\ell \in [s]$, and
\begin{equation}
w=\frac{1}{s}\sum_{\ell=1}^{s} w^{\ell}.
\end{equation}
This shows that $w$ is a convex combination of points in $\MC^H$, and hence $w\in \MC^H$. By the arbitrariness of vertex $w$, $\overline{\MC}^{H_1}\cap \overline{\MC}^{H_2}\subseteq \MC^H$.
\end{proof}

\begin{remark}
The above vertices matching technique, expressing a convex combination in terms of $\frac{1}{s}\sum_{\ell=1}^{s} u^{k,\ell}$
and matching the vertices $\{u^{k,\ell}\}_{\ell=1}^s$ between $k=1$ and $k=2$, is inspired by the proof of Theorem 25.1 in \cite{schrijver_combinatorial_2002}. In fact, using this technique to prove Theorem 5 of \cite{del_pia_multilinear_2018} and Theorem 4 of \cite{del_pia_polynomial-size_2023} is much simpler than the original approaches --- calculating all the coefficients of the convex combination. 
\end{remark}


\section{Proof of Theorem~\ref{thm:alpha_acyclic}}
\label{sec:acyclic}
Recall that the join tree is an important characterization for $\alpha$-acyclicity.
\begin{theorem}[Part of Theorem 3.4 of \cite{beeri_desirability_1983}]
\label{thm:alpha_join_tree}
A hypergraph is $\alpha$-acyclic if and only if it has a join tree.
\end{theorem}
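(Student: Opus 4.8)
The plan is to prove both implications by induction on the number of hyperedges $|E|$, using the characterization of $\alpha$-acyclicity through Graham's (GYO) reduction: $H$ is $\alpha$-acyclic if and only if the repeated application of the two operations---(i) delete a vertex belonging to exactly one hyperedge, and (ii) delete a hyperedge that is contained in another---reduces $H$ to the empty hypergraph. I would combine these into a single \emph{ear-removal} step: call a hyperedge $e$ an \emph{ear} with \emph{witness} $e'\neq e$ if $e\cap\bigl(\bigcup_{f\in E,\,f\neq e} f\bigr)\subseteq e'$, i.e. every vertex of $e$ is either private to $e$ or also lies in $e'$. Stripping such an ear is exactly a batch of GYO steps (first delete, by (i), the degree-one private vertices of $e$, after which $e$ is contained in $e'$ and is removed by (ii)); hence $H$ is $\alpha$-acyclic iff ears can be stripped repeatedly until at most one hyperedge remains. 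Throughout I use that a join tree $T=(E,\mathcal{E})$ is a tree on the hyperedges satisfying the \emph{running intersection property}: for every vertex $v$, the nodes $\{e\in E: v\in e\}$ induce a connected subtree of $T$.

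For the direction ``has a join tree $\Rightarrow$ $\alpha$-acyclic'', suppose $|E|\geq 2$ and pick a leaf $e$ of $T$ with unique neighbour $e'$. I would first show $e$ is an ear with witness $e'$: if $v\in e$ also lies in some $f\neq e$, then the subtree of nodes containing $v$ is connected and contains both $e$ and $f$, and since $e$ is a leaf the $T$-path from $e$ to $f$ runs through $e'$, forcing $v\in e'$; thus $e\cap\bigl(\bigcup_{f\neq e}f\bigr)\subseteq e'$. Deleting the node $e$ then yields a join tree of the reduced hypergraph $H'=\bigl(V',\,E\setminus\{e\}\bigr)$: for a surviving vertex $v$, its set of containing nodes loses at most the leaf $e$, and removing a leaf from a connected subtree keeps it connected. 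By induction $H'$ is $\alpha$-acyclic, and since stripping $e$ is a sequence of GYO steps, so is $H$; the base case $|E|\leq 1$ is immediate.

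For the converse I would reverse the reduction. Assume $H$ is $\alpha$-acyclic with $|E|\geq 2$; then it admits an ear $e$ with some witness $e'\in E\setminus\{e\}$. Deleting $e$ leaves an $\alpha$-acyclic $H'$ still containing $e'$, so by induction $H'$ has a join tree $T'$. I would form $T$ by adjoining the node $e$ and the single tree edge $\{e,e'\}$, and verify running intersection: for $v\notin e$ the containing nodes are unchanged and connected; for $v\in e$ private to $e$ the set $\{e\}$ is trivially connected; and for $v\in e$ shared, the ear condition gives $v\in e'$, so $e'$ already lies in the connected subtree of nodes of $T'$ containing $v$, and attaching $e$ adjacent to $e'$ preserves connectivity. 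Hence $T$ is a join tree of $H$, and the base case (a single hyperedge, with its one-node tree) is trivial.

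The step I expect to be the main obstacle is the bookkeeping that keeps ear-removal in exact correspondence with GYO reduction on both sides. One must check that in an $\alpha$-acyclic hypergraph with $\geq 2$ edges a witness $e'$ can always be chosen among the \emph{remaining} hyperedges (not one already deleted), that eliminating the private vertices of $e$ does not interfere with reducing the rest of $H$, and that the reduced hypergraph both inherits and transmits $\alpha$-acyclicity---this is precisely where the equivalence of the ear formulation with Graham's algorithm is invoked. A secondary subtlety is non-reduced or disconnected input: a hyperedge properly contained in another should be peeled off first by operation (ii) (and attached as a leaf to its container in the join tree), and disconnected components produce a join \emph{forest} rather than a tree, so the clean statement presumes $H$ is connected (or one works component-wise). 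Handling these cases uniformly, together with isolated vertices, is what makes a fully rigorous proof longer than the sketch above.
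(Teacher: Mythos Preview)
The paper does not prove this theorem at all: it is quoted as a known result from Beeri, Fagin, Maier and Yannakakis (1983) and used as a black box in the proof of Theorem~\ref{thm:alpha_acyclic}. So there is no ``paper's own proof'' to compare your proposal against.

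That said, your sketch is a standard and essentially correct route to the result. The equivalence of $\alpha$-acyclicity with GYO reducibility is itself a nontrivial fact (it is another part of the same Theorem~3.4 in Beeri et al.), so strictly speaking you are trading one characterization for another rather than proving the theorem from scratch; if that equivalence is taken as given, the ear-removal induction you outline goes through. Your identified subtleties are real but minor: the existence of an ear with a witness among the surviving hyperedges follows because GYO reduction, by definition, only references current hyperedges; and the disconnected case is handled either by working componentwise or by noting that one may add arbitrary tree edges between the join trees of the components (the running-intersection property is vacuous for vertices not shared across components). The paper's own usage only needs the ``$\alpha$-acyclic $\Rightarrow$ join tree exists'' direction together with the leaf property you prove (that a leaf $e_1$ of the join tree satisfies $e_1\cap V_2=e_1\cap e_2$), which is exactly Eq.~\eqref{eq:intersect_e_1_V_2}.
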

We repeat the definition of join tree to be more self-contained.
\begin{definition}[Condition 3.9 of \cite{beeri_desirability_1983}]
Give a hypergraph $H(V,E)$, a join tree $T$ of $H$ is a tree (connected acyclic undirected graph) with vertex set $E$ such that for any path, the intersection of the two endpoints,  is contained in every vertex along the path. More precisely, let $e_1$ and $e_2$ be any distinct vertices of $T$, then each vertex $e$ on the unique path connecting $e_1$ and $e_2$ satisfies $e_1\cap e_2\subseteq e$.
\end{definition}
Note that each vertex of $T$ is a hyperedge of $H$. To distinguish it from vertex of $H$, we sometimes refer to vertices of $T$ as nodes when we need to stress their properties about $T$.


Before showing the proof, we quickly verify that the RHS of Eq.~\eqref{eq:MC_H_repres_alpha_acyclic} is a relaxation of $\MC^H$.

Suppose $H$ is an arbitrary graph. It is easily seen that for any $w\in \mathscr{S}^H$, $e\in E$, and $i_0\in I_0\in e$, we have
\begin{subequations}
\begin{align}
\sum_{J\in \mathcal{J}^{e}: J\ni i_0}w_J &= \sum_{J\in \mathcal{J}^{e}: J\ni i_0}\prod_{i\in J}w_i\\
&= w_{i_0}\prod_{I\in e-I_0}\sum_{i\in I} w_i\\
&=w_{i_0}.
\end{align}
\end{subequations}
Similarly, for any $e,e'\in E$ with $|e\cap e'|>1$ and $J_0\in \mathcal{J}^{e\cap e'}$, it holds that
\begin{equation}
\sum_{J\in \mathcal{J}^e: J\supseteq J_0} w_J =\prod_{i\in J_0} w_i=\sum_{J\in \mathcal{J}^{e'}: J\supseteq J_0}w_J.
\end{equation}
Thus, we can define the following relaxation of $\MC^H$:\begin{subequations}
\label{eq:MC_H_relaxation_intersect}
\begin{align}
\MC^H_{\cap} :=\bigg\{w\in \mathbb{R}_{\geq 0}^{\mathcal{J}^H}\bigg|
&w(I)=1,\ \forall\, I\in V;\label{eq:MC_H_relax_inters_sum_I}\\
&w_i-\sum_{J\in \mathcal{J}^e: J\ni i}w_J=0,\ \forall\, e\in E, i\in I\in e;\\
&\sum_{J\in \mathcal{J}^e: J\supseteq J_0} w_J -\sum_{J\in \mathcal{J}^{e'}: J\supseteq J_0}w_J=0,\ \forall\, e,e'\in E,|e\cap e'|>1,J_0\in \mathcal{J}^{e\cap e'}\bigg\}.\label{eq:equal_inters_main}
\end{align}
\end{subequations}
Since $\mathscr{S}^H\subseteq \MC^H$, we have $\MC^H\subseteq \MC^H_{\cap}$.

When $H$ is $\alpha$-acyclic, by Theorem~\ref{thm:alpha_join_tree}, $H$ admits a join tree. Let $T=(E,\mathcal{E})$ be its join tree. By limiting $e,e'$ adjacent in $T$ from the system of equalities Eq.~\eqref{eq:equal_inters_main}, $\MC^H_{\cap}$ can be further relaxed into
\begin{subequations}
\label{eq:MC_H_relax_join_tree}
\begin{align}
\MC^H_T:=\bigg\{w\in \mathbb{R}_{\geq 0}^{\mathcal{J}^H}\bigg|
&w(I)=1,\ \forall\, I\in V;\label{eq:H_repres_sum_I}\\
&w_i -\sum_{J\in \mathcal{J}^e: J\ni i}w_J=0,\ \forall\, e\in E, i\in I\in e;\label{eq:equal_node_J_e}\\
&\sum_{J\in \mathcal{J}^e: J\supseteq J_0} w_J -\sum_{J\in \mathcal{J}^{e'}: J\supseteq J_0}w_J=0,\ \forall\, ee'\in \mathcal{E},|e\cap e'|>1,J_0\in \mathcal{J}^{e\cap e'}\bigg\},\label{eq:equal_intersect_tree}
\end{align}
\end{subequations}
which is the same to the RHS of Eq.~\eqref{eq:MC_H_repres_alpha_acyclic} of Theorem~\ref{thm:alpha_acyclic}. 
It is obvious that $\MC^H_{\cap}\subseteq \MC^H_T$. Note that the join tree of $H$ is not unique, so $\MC^H_T$ may vary according to $T$. However, the following lemma demonstrates that $\MC^H_T$ is in fact independent of the choice of $T$.

\begin{lemma}
\label{lem:MC_H_cap_eq_T}
Suppose $H=(V,E)$ is $\alpha$-acyclic, and $T=(E,\mathcal{E})$ is a join tree of $H$, then $\MC^H_\cap=\MC^H_T$.
\end{lemma}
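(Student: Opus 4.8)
The plan is to establish the non-trivial inclusion $\MC^H_T\subseteq\MC^H_\cap$, since $\MC^H_\cap\subseteq\MC^H_T$ was already observed. The constraints $w(I)=1$ (Eqs.~\eqref{eq:MC_H_relax_inters_sum_I} and \eqref{eq:H_repres_sum_I}) and the node constraints (Eq.~\eqref{eq:equal_node_J_e}) appear identically in both descriptions, so it suffices to show that a point $w\in\MC^H_T$ satisfies the intersection equalities \eqref{eq:equal_inters_main} for \emph{every} pair $e,e'\in E$ with $|e\cap e'|>1$ and every $J_0\in\mathcal{J}^{e\cap e'}$, not merely for the pairs adjacent in the join tree $T$.

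Fix such $e,e',J_0$ and let $e=f_0,f_1,\dots,f_m=e'$ be the unique path joining $e$ to $e'$ in $T$. By the join-tree property, $e\cap e'\subseteq f_t$ for every $t$, hence $e\cap e'\subseteq f_t\cap f_{t+1}$ for each consecutive pair; in particular $|f_t\cap f_{t+1}|\geq|e\cap e'|>1$, so the edge $f_tf_{t+1}\in\mathcal{E}$ does carry the equality constraints in Eq.~\eqref{eq:equal_intersect_tree}. The target is the single-step identity
\[
\sum_{J\in\mathcal{J}^{f_t}:\,J\supseteq J_0}w_J=\sum_{J\in\mathcal{J}^{f_{t+1}}:\,J\supseteq J_0}w_J,\qquad t=0,1,\dots,m-1,
\]
which telescopes to exactly Eq.~\eqref{eq:equal_inters_main} for the pair $(e,e')$; the cases $e=e'$ or $m=0$ are trivial.

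To prove the single-step identity, set $g:=f_t\cap f_{t+1}$, so that $e\cap e'\subseteq g\subseteq f_t$. Each $J\in\mathcal{J}^{f_t}$ with $J\supseteq J_0$ restricts to $J_0':=\bigcup_{I\in g}(J\cap I)$, which lies in $\mathcal{J}^g$ and satisfies $J_0\subseteq J_0'\subseteq J$ (for $I\in g$ one has $|J\cap I|=1$ since $g\subseteq f_t=E(J)$, and for $I\in e\cap e'$ this single element is forced to be the element of $J_0\cap I$ because $J\supseteq J_0$); conversely, every $J_0'\in\mathcal{J}^g$ with $J_0'\supseteq J_0$ arises in this way precisely from the $J\in\mathcal{J}^{f_t}$ with $J\supseteq J_0'$. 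Hence the sum refines as
\[
\sum_{J\in\mathcal{J}^{f_t}:\,J\supseteq J_0}w_J=\sum_{J_0'\in\mathcal{J}^g:\,J_0'\supseteq J_0}\ \sum_{J\in\mathcal{J}^{f_t}:\,J\supseteq J_0'}w_J,
\]
and the identical refinement holds with $f_{t+1}$ in place of $f_t$. Applying Eq.~\eqref{eq:equal_intersect_tree} for the adjacent pair $f_tf_{t+1}$ to each inner sum --- legitimate because $J_0'\in\mathcal{J}^g=\mathcal{J}^{f_t\cap f_{t+1}}$ and $|g|>1$ --- converts the first double sum into the second, which is the desired identity.

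I expect the only delicate point to be the bookkeeping of the refinement: verifying that $J\mapsto J_0'$ is a well-defined surjection onto $\{J_0'\in\mathcal{J}^g:J_0'\supseteq J_0\}$ whose fibres are exactly the sets $\{J\in\mathcal{J}^{f_t}:J\supseteq J_0'\}$, together with the (easy but essential) use of the join-tree property to guarantee $|f_t\cap f_{t+1}|>1$ so that the adjacent-pair equalities are available all along the path. Everything else is the formal telescoping.
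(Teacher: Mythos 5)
Your proof is correct and rests on the same key computation as the paper's: refining the sum $\sum_{J\in\mathcal{J}^{f}:J\supseteq J_0}w_J$ over the elements of $\mathcal{J}^{g}$ for an intermediate intersection $g\supseteq e\cap e'$ (guaranteed by the join-tree property) and then invoking the tree-edge equalities for the finer index sets. The only difference is organizational --- you telescope single-step identities along the path where the paper runs an induction on the path length splitting at an internal node --- so this is essentially the paper's argument in a slightly more direct form.
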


\begin{proof}
Comparing Eq.~\eqref{eq:MC_H_relaxation_intersect} with Eq.~\eqref{eq:MC_H_relax_join_tree}, the only difference is that all equalities in Eq.~\eqref{eq:equal_intersect_tree} are contained in Eq.~\eqref{eq:equal_inters_main}, thus $\MC^H_T\supseteq \MC^H_\cap$.
To prove $\MC^H_T=\MC^H_\cap$, it suffices to show that all equalities in Eq.~\eqref{eq:equal_inters_main} are implicit equalities of $\MC^H_T$. 

Suppose $e,e'\in E$, $|e\cap e'|>1$, 
then there exists a unique path from $e$ to $e'$ in $T$. 
We prove by induction on the path length $l$ that for any $J_0\in \mathcal{J}^{e\cap e'}$, the following equality holds for any $w\in \MC^H_T$:
\begin{equation}
\label{eq:inters_implicit_equal}
\sum_{J\in \mathcal{J}^e: J\supseteq J_0} w_J =\sum_{J\in \mathcal{J}^{e'}: J\supseteq J_0}w_J.
\end{equation}


When $l\leq 1$, the statement is trivial since $l=0$ implies $e=e'$ and $l=1$ implies $ee'\in \mathcal{E}$, meaning that Eq.~\eqref{eq:inters_implicit_equal} is just Eq.~\eqref{eq:equal_intersect_tree}.

Suppose $e_1,e_2\in E$ with $|e_1\cap e_2|>1$ and the length of the path from $e_1$ to $e_2$ in $T$ is $l$. Now assume the statement holds for $e,e'$ such that the path length is shorter that $l$. Note that Eq.~\eqref{eq:inters_implicit_equal} holds even when $|e\cap e'|\leq 1$ for the following reasons: If $|e\cap e'|=1=|J_0|$, according to equality Eq.~\eqref{eq:equal_node_J_e}, it is easy to see that both sides of Eq.~\eqref{eq:inters_implicit_equal} are equal to $w_{J_0}$; If $|e\cap e'|=0$, then $J_0=\varnothing$, 
and combining Eqs.~\eqref{eq:H_repres_sum_I} and \eqref{eq:equal_node_J_e}, both sides of Eq.~\eqref{eq:inters_implicit_equal} are equal to $1$. Therefore, we can assume Eq.~\eqref{eq:inters_implicit_equal} holds without checking if $|e\cap e'|>1$.

Take an internal node $e_0$ along the path from $e_1$ to $e_2$, then the length of the path from $e_1$ to $e_0$ in $T$ is strictly less than $l$. By the induction hypothesis, for any $J'\in \mathcal{J}^{e_1\cap e_0}$, the following equality holds for $w\in\MC^H_T$:
\begin{equation}
\label{eq:inters_implicit_assump1}
\sum_{J\in \mathcal{J}^{e_1}: J\supseteq J'} w_J =\sum_{J\in \mathcal{J}^{e_0}: J\supseteq J'}w_J.
\end{equation}
According to the properties of the join tree, $e_1\cap e_2\subseteq e_0$, so $e_1\cap e_2\subseteq e_1\cap e_0$. Given $J_0\in\mathcal{J}^{e_1\cap e_2}$, consider the index set of summation for $J$ on the left side of Eq.~\eqref{eq:inters_implicit_equal} for $e\leftarrow e_1$, that is
\begin{equation}
\mathcal{J}^{e_1}(J_0):=\left\{J\in \mathcal{J}^{e_1}\middle|J\supseteq J_0\right\}.
\end{equation}
Suppose $J\in \mathcal{J}^{e_1}(J_0)$. Let $J_1=J\cap \bigcup_{I\in e_1\cap e_0} I$, then $J_1\in\mathcal{J}^{e_1\cap e_0}$. Since $e_1\cap e_2\subseteq e_1\cap e_0$, we have $J_1\supseteq J_0$. Moreover, as $J$ ranges over $\mathcal{J}^{e_1}(J_0)$, the corresponding $J_1$ ranges over $\{J'\in \mathcal{J}^{e_1\cap e_0}|J'\supseteq J_0\}$. Thus elements $J$s of $\mathcal{J}^{e_1}(J_0)$ can be partitioned according to $J\cap \bigcup_{I\in e_1\cap e_0} I$, namely,
\begin{equation}
\label{eq:J_e_1_J_0_partition}
\mathcal{J}^{e_1}(J_0)=\bigsqcup_{J'\in\mathcal{J}^{e_1\cap e_0}:J'\supseteq J_0} \left\{J\in \mathcal{J}^{e_1}(J_0)\middle|J\cap \bigcup_{I\in e_1\cap e_0} I=J'\right\}.
\end{equation}
It is easily seen that the subset of the partition can be simplified as
\begin{equation}
\left\{J\in \mathcal{J}^{e_1}(J_0)\middle|J\cap \bigcup_{I\in e_1\cap e_0} I=J'\right\}=\{J\in \mathcal{J}^{e_1}|J\supseteq J'\}
\end{equation}
for $J'\supseteq J_0$.

Thus we can partition the summation $\sum_{J\in \mathcal{J}^{e_1}:J\supseteq J_0}w_J$ according to Eq.~\eqref{eq:J_e_1_J_0_partition}, i.e.,
\begin{equation}
\label{eq:sum_J_e_1_sup_J_0}
\sum_{J\in \mathcal{J}^{e_1}: J\supseteq J_0} w_J = \sum_{J'\in \mathcal{J}^{e_1\cap e_0}:J'\supseteq J_0} \sum_{J\in \mathcal{J}^{e_1}:J\supseteq J'} w_J.
\end{equation}
Similarly, partitioning the summation $\sum_{J\in \mathcal{J}^{e_0}:J\supseteq J_0}w_J$ yields
\begin{equation}
\label{eq:sum_J_e_0_sup_J_0}
\sum_{J\in \mathcal{J}^{e_0}:J\supseteq J_0} w_J=\sum_{J'\in \mathcal{J}^{e_1\cap e_0}:J'\supseteq J_0} \sum_{J\in \mathcal{J}^{e_0}:J\supseteq J'} w_J.
\end{equation}
The RHS of Eqs.~\eqref{eq:sum_J_e_1_sup_J_0} and \eqref{eq:sum_J_e_0_sup_J_0} are equal by applying Eq.~\eqref{eq:inters_implicit_assump1} to the inner summation. Therefore,
\begin{equation}
\label{eq:sum_J_e1_J_e0}
\sum_{J\in \mathcal{J}^{e_1}: J\supseteq J_0} w_J =
\sum_{J\in \mathcal{J}^{e_0}:J\supseteq J_0} w_J
\end{equation}


Applying the induction hypothesis for $e_2$ and $e_0$ and deducing similarly, we also obtain
\begin{equation}
\label{eq:sum_J_e2_J_e0}
\sum_{J\in \mathcal{J}^{e_2}: J\supseteq J_0} w_J=\sum_{J\in \mathcal{J}^{e_0}:J\supseteq J_0} w_J.
\end{equation}
Combining Eqs.~\eqref{eq:sum_J_e1_J_e0} and \eqref{eq:sum_J_e2_J_e0}, it can be seen that Eq.~\eqref{eq:inters_implicit_equal} holds for $w\in\MC^H_T$ with $e\leftarrow e_1$, $e'\leftarrow e_2$. 
\end{proof}

Theorem~\ref{thm:alpha_acyclic} actually shows a stronger result: When $H$ is $\alpha$-acyclic, $\MC^H_T$ is equal to $\MC^H$ and provides an explicit half-space representation of the latter. 
We prove it by induction on $|E|$. To apply the induction hypothesis, we partition $H$ into two subhypergraphs $H_1$, $H_2$ according to a leaf node of $T$. Theorem~\ref{thm:decomp} allows us to achieve a half-space representation of $\MC^H$ from those of $\MC^{H_k},k=1,2$. And Lemma~\ref{lem:MC_H_cap_eq_T} is important for verifying the resulting representation is consistent with $\MC^H_T$.



\begin{proof}[Proof of Theorem~\ref{thm:alpha_acyclic}]
Without loss of generality, assume that $H$ does not have any isolated vertices. Otherwise, let $V_0$ be the set of all isolated vertices of $H$, and $H'=(V\setminus V_0,E)$ be the hypergraph obtained by removing $V_0$ from $H$. Then $H'$ has the same set of hyperedges as $H$ and is also $\alpha$-acyclic. Additionally, the join tree of $H$ is also the join tree of $H'$. Since any $I\in V_0$ is an isolated vertex, for $i_0\in I$, the $w_{i_0}$ component of $w\in \mathscr{S}^{H}$ does not appear in the production constraints $w_J=\prod_{i\in J} w_i$ defined in Eq.~\eqref{eq:def_SG_MCMP_set}. Furthermore, because the indices of the remaining components are in $\mathcal{J}^{H'}$, we have
\begin{equation}
\label{eq:S_H_isolated}
\mathscr{S}^{H}=\left\{w\in \{0,1\}^{\mathcal{J}^H}\middle|\proj_{H'} w\in \mathscr{S}^{H'};w(I)=1,\ \forall\, I\in V_0\right\}.
\end{equation}
Let $\mathcal{J}_0=L\left(\bigcup_{I\in V_0} I\right)=\bigcup_{I\in V_0} \mathcal{J}^{\{I\}}\subseteq \mathcal{J}^H$. 
Taking the convex hull of both sides of 
Eq.~\eqref{eq:S_H_isolated} yields
\begin{subequations}
\begin{align}
\MC^{H}=\bigg\{w\in \mathbb{R}^{\mathcal{J}^H}\bigg|&\proj_{H'} w\in \MC^{H'};\\
&\proj_{\mathcal{J}_0} w\in \conv\left\{u\in \{0,1\}^{\mathcal{J}_0}\middle|u(I)=1,\ \forall\, I\in V_0\right\}\bigg\}.\label{eq:u_J_0_sum_1}
\end{align}
\end{subequations}
It is evident that Eq.~\eqref{eq:u_J_0_sum_1} is equivalent to
\begin{equation}
\proj_{\mathcal{J}_0} w\geq 0\ \text{and}\ w(I)=1,\ \forall\, I\in V_0.
\end{equation}
These inequalities are already contained in Eq.~\eqref{eq:H_repres_sum_I}. Thus, if $\MC^{H'}=\MC^{H'}_T$, then $\MC^{H}=\MC^{H}_T$.

In the rest of the proof, it is assumed that $H$ has no isolated vertices. We prove the theorem by induction on $|E|$. First, when $H$ contains no hyperedges, $T$ is an empty graph, and the theorem holds. When $|E|=1$, since $H$ has no isolated vertices, we have $H=(e,\{e\})$. Let $\Delta^{\mathcal{J}^e}$ be the standard simplex defined as
\begin{equation}
\Delta^{\mathcal{J}^e}:=\left\{u\in \mathbb{R}^{\mathcal{J}^e}_{\geq 0}\middle|\sum_{J\in \mathcal{J}^e} u_J=1\right\},
\end{equation}
all vertices of which are $0$-$1$ vectors. Now we define the linear map 
$\mathcal{T}: \mathbb{R}^{\mathcal{J}^e}\rightarrow \mathbb{R}^{\mathcal{J}^H}$ that maps $u$ to $w$ 
as follows:
\begin{subequations}
\begin{alignat}{2}
w_J&=u_J,\quad & \forall\, J&\in \mathcal{J}^e,\\
 w_i&= \sum_{J\in \mathcal{J}^e: J\ni i}u_J,\ & \forall\, i&\in I, I\in e.
\end{alignat}
\end{subequations}
Consider the polytope $P^H=\mathcal{T}(\Delta^{\mathcal{J}^e})$ obtained by mapping $\Delta^{\mathcal{J}^e}$ via $\mathcal{T}$. It is easy to see that its  half-space representation is given by
\begin{equation}
\label{eq:H_represent_1_hyperedge}
P^H=\bigg\{w\in \mathbb{R}_{\geq 0}^{\mathcal{J}^{H}}\bigg|w_i=\sum_{J\in \mathcal{J}^e: J\ni i}w_J,\ \forall\, I\in e, i\in I;\sum_{J\in \mathcal{J}^e} w_J=1\bigg\}.
\end{equation}
Moreover, the vertex set of $P^H$ is obtained by mapping the vertex set of $\Delta^{\mathcal{J}^e}$ via $\mathcal{T}$. 
This directly implies that the vertex set of $P^H$ is exactly $\mathscr{S}^H$, hence $P^H=\MC^H$. Note that in this case, the join tree of $H$ consists of a single node and no edges, so Eq.~\eqref{eq:equal_intersect_tree} does not exist. It is easy to verify that Eqs.~\eqref{eq:H_represent_1_hyperedge} and \eqref{eq:MC_H_relax_join_tree} are equivalent half-space representations: $\MC^H=P^H=\MC^H_T$, and the theorem holds for $|E|=1$.

Now suppose $|E|>1$. Let $H$ be a $\alpha$-acyclic hypergraph, and $T=(E,\mathcal{E})$ be a join tree of $H$. Let $e_1$ be a leaf node of $T$, and $e_2$ be the unique neighbor of $e_1$ in $T$. Then the path from $e_1$ to any node $e$ other than $e_1$ in $T$ contains $e_2$. By the definition of a join tree, $e\cap e_1\subseteq e_2$, implying $e\cap e_1\subseteq e_1\cap e_2$. Let $V_2=\bigcup_{e\in E-e_1} e$, then
\begin{equation}
e_1\cap V_2 = \bigcup_{e\in E-e_1} e_1\cap e \subseteq e_1\cap e_2.
\end{equation}
On the other hand, since $e_2\subseteq V_2$, we have $e_1\cap e_2\subseteq e_1\cap V_2$, hence
\begin{equation}
\label{eq:intersect_e_1_V_2}
e_1\cap V_2 = e_1\cap e_2.
\end{equation}
Let $V_1=e_1$, $p=e_1\cap e_2$. From Eq.~\eqref{eq:intersect_e_1_V_2}, we have $p=V_1\cap V_2$. Since $H$ has no isolated vertices, we have 
\[
V_1\cup V_2=\bigcup_{e\in E} e=V.
\]
Let $H_1=(V_1,\{e_1\})$ and $H_2=(V_2,E-e_1)$, both of which are subhypergraphs of $H$,  satisfying that $V_1\cup V_2=V$, hence $H=H_1\cup H_2$. Since $H_1$ has only one hyperedge, it is $\alpha$-acyclic, and its join tree is $T_1=(\{e_1\},\varnothing)$. Removing the node $e_1$ from $T$ yields the tree $T_2:=(E-e_1,\mathcal{E}-e_1e_2)$, which is the join tree of $H_2=(V_2,E-e_1)$, so $H_2$ is also $\alpha$-acyclic. By the induction hypothesis applied to $H_1$ and $H_2$, we have $\MC^{H_k}=\MC^{H_k}_{T_k}$ for $k=1,2$, namely,
\begin{subequations}
\label{eq:H_repres_H_1_single_hyperedge}
\begin{align}
\MC^{H_1}
=\bigg\{w\in \mathbb{R}_{\geq 0}^{\mathcal{J}^{H_1}}\bigg|
&w(I)=1,\ \forall\, I\in V_1;\\
&w_i =\sum_{J\in \mathcal{J}^{e_1}:J\ni i}w_J, \ \forall\, I\in V_1, i\in I\bigg\},
\end{align}
\end{subequations}
and
\begin{subequations}
\label{eq:H_repres_H2}
\begin{alignat}{1}
\MC^{H_2}=\bigg\{&w\in \mathbb{R}_{\geq 0}^{\mathcal{J}^{H_2}}\bigg|\nonumber\\
&w(I)=1,\ \forall\, I\in V_2;\\
&w_i =\sum_{J\in \mathcal{J}^e: J\ni i}w_J,\ \forall\, e\in E-e_1, i\in I\in e;\\
&\sum_{J\in \mathcal{J}^e: J\supseteq J_0} w_J =\sum_{J\in \mathcal{J}^{e'}: J\supseteq J_0}w_J,\ \forall\, ee'\in \mathcal{E}-e_1e_2,|e\cap e'|>1,J_0\in \mathcal{J}^{e\cap e'}\bigg\}.
\end{alignat}
\end{subequations}

If we can apply Theorem~\ref{thm:decomp} or Example~\ref{eg:simple_decomp} to $H=H_1\cup H_2$, then we have $\MC^{H}=\overline{\MC}^{H_1}\cap \overline{\MC}^{H_2}$. However, the conditions, $p\in (L(V_1)\cup E(H_1))\cap (L(V_2)\cup E(H_2))$ or $V_1\cap V_2=\varnothing$, may not hold generally.  To this end, we distinguish several cases to construct hypergraphs to fulfill them. 

\begin{description}
\item[1. $|p|\leq 1$.] Now either $|p|=1$ or $|p|=0$ holds. In the first case, $p=L(V_1)\cap L(V_2)$, so the condition $p\in (L(V_1)\cup E(H_1))\cap (L(V_2)\cup E(H_2))$ holds, and we have $\MC^{H}=\overline{\MC}^{H_1}\cap \overline{\MC}^{H_2}$. In the latter case, $p=\varnothing$, and by Example~\ref{eg:simple_decomp}, we also have $\MC^{H}=\overline{\MC}^{H_1}\cap \overline{\MC}^{H_2}$. Combining the equalities from Eqs.~\eqref{eq:H_repres_H_1_single_hyperedge} and \eqref{eq:H_repres_H2} gives the half-space representation of $\MC^H$, which can be easily verified to be identical to Eq.~\eqref{eq:MC_H_relax_join_tree}. Thus, $\MC^H=\MC^H_T$, and the theorem holds.
\item[2. $|p|>1$.] We consider when $p\notin E$ or $p\in E$ separately. The latter case is further divided into $p\in E-e_1$ and $p=e_1$. 
Accordingly, there are three cases under investigation.

\textbf{(1) $p\notin E$. } In this case, let $H_1'=(V_1,\{e_1,p\})$, $H_2'=(V_2,(E-e_1)\cup\{p\})$, and $H'=(V,E\cup\{p\})$. Then $H'=H_1'\cup H_2'$ satisfies the conditions of Theorem~\ref{thm:decomp}. Therefore, $\MC^{H'}=\overline{\MC}^{H_1'}\cap \overline{\MC}^{H_2'}$.

Note that $p\subset e_1$. By Proposition~\ref{prop:add_hyperedge_isomorph}, the half-space representation of $\MC^{H_1'}$ can be obtained by adding equalities to $\MC^{H_1}$, namely
\begin{subequations}
\label{eq:H_repres_H_1_prime}
\begin{align}
\MC^{H_1'}=\bigg\{w\in \mathbb{R}^{\mathcal{J}^{H_1'}}_{\geq 0}\bigg|
&w(I)=1,\  \forall\, I\in V_1;\\
& w_i=\sum_{J\in \mathcal{J}^{e_1}: J\ni i}w_J,\ \forall\, I\in V_1,i\in I;\\
&w_{J_0}=\sum_{J\in\mathcal{J}^{e_1}:J\supseteq J_0} w_{J},\  \forall\, J_0\in \mathcal{J}^{p}\bigg\}. \label{eq:cons_H_1_prime_e_1}
\end{align}
\end{subequations}
Similarly, since $p\subset e_2\in E-e_1$, the half-space representation of $\MC^{H_2'}$ can also be obtained by adding equalities to $\MC^{H_2}$, resulting in
\begin{equation}
\label{eq:H_repres_H_2_prime}
\MC^{H_2'}=\left\{w\in \mathbb{R}^{\mathcal{J}^{H_2'}}\middle|\proj_{H_2} w\in \MC^{H_2};w_{J_0}=\sum_{J\in\mathcal{J}^{e_2}:J\supseteq J_0} w_{J},\ \forall\, J_0\in \mathcal{J}^p\right\}.
\end{equation}
Since $\MC^{H'}$ is decomposable into $\MC^{H_1'}$ and $\MC^{H_2'}$, the half-space representation of $\MC^{H'}$ can be defined by the union of inequalities in Eqs.~\eqref{eq:H_repres_H_1_prime} and \eqref{eq:H_repres_H_2_prime}. Combining Eq.~\eqref{eq:H_repres_H2}, we obtain
\begin{subequations}
\label{eq:H_repres_H_prime}
\begin{align}
\MC^{H'}=\bigg\{&w\in \mathbb{R}_{\geq 0}^{\mathcal{J}^{H'}}\bigg|\nonumber\\
& w(I)=1,\ \forall\, I\in V; \label{eq:cons_multiple_choice}\\
& w_i=\sum_{J\in \mathcal{J}^{e}: J\ni e}w_J,\ \forall\, e\in E, i\in I\in e;\label{eq:cons_vert} \\
& \sum_{J\in \mathcal{J}^e: J\supseteq J_0} w_J =\sum_{J\in \mathcal{J}^{e'}: J\supseteq J_0}w_J,\ \forall\, ee'\in \mathcal{E}-e_1e_2,|e\cap e'|>1,J_0\in \mathcal{J}^{e\cap e'}; \label{eq:cons_T_neibr}\\
& w_{J_0}=\sum_{J\in\mathcal{J}^{e_1}:J\supseteq J_0} w_{J}=\sum_{J\in\mathcal{J}^{e_2}:J\supseteq J_0} w_{J},\ \forall\, J_0\in \mathcal{J}^p\bigg\}.\label{eq:cons_intersect_p}
\end{align}
\end{subequations}
By Proposition~\ref{prop:proj_subhgraph_MC_H}, $\MC^H=\proj_{H}\MC^{H'}$. Hence the half-space representation of $\MC^H$ can be obtained by applying Fourier-Motzkin elimination to Eq.~\eqref{eq:H_repres_H_prime} and eliminating the components in $\mathcal{J}^{H'}\setminus \mathcal{J}^H=\mathcal{J}^{p}$. As these components do not appear in Eqs.~\eqref{eq:cons_multiple_choice}--\eqref{eq:cons_T_neibr}, it is easily seen that
\begin{subequations}
\begin{align}
\MC^H=\bigg\{w\in \mathbb{R}_{\geq 0}^{\mathcal{J}^{H}}\bigg|&w\ \text{satisfies Eqs.~\eqref{eq:cons_multiple_choice}--\eqref{eq:cons_T_neibr}};\\
&\sum_{J\in\mathcal{J}^{e_1}:J\supseteq J_0} w_{J}=\sum_{J\in\mathcal{J}^{e_2}:J\supseteq J_0} w_{J},\ \forall\, J_0\in \mathcal{J}^{p}\bigg\}.\label{eq:cons_intersect_p2}
\end{align}
\end{subequations}
Clearly, Eq.~\eqref{eq:cons_intersect_p2} is the same as the specific equality in Eq.~\eqref{eq:equal_intersect_tree} where $e\leftarrow e_1$ and $e'\leftarrow e_2$, so $\MC^H=\MC^H_T$.

\textbf{(2) $p\in E-e_1$.} In this case, $p$ is a hyperedge of $H$. Let $H_1'=(V_1,\{e_1,p\})$, then $H=H_1'\cup H_2$ satisfies the conditions of Theorem \ref{thm:decomp}, so $\MC^{H}=\overline{\MC}^{H_1'}\cap \overline{\MC}^{H_2}$. Combining Eqs.~\eqref{eq:H_repres_H_1_prime} and \eqref{eq:H_repres_H2}, we obtain
\begin{equation}
\MC^H=\bigg\{w\in \mathbb{R}_{\geq 0}^{\mathcal{J}^{H}}\bigg|w\ \text{satisfies Eqs.~\eqref{eq:cons_multiple_choice}--\eqref{eq:cons_T_neibr} and \eqref{eq:cons_H_1_prime_e_1}}\bigg\};\\
\end{equation}



Note that equalities of Eqs.~\eqref{eq:cons_multiple_choice}--\eqref{eq:cons_T_neibr} are all contained in Eq.~\eqref{eq:MC_H_relaxation_intersect}, while Eq.~\eqref{eq:cons_H_1_prime_e_1} is the special case of Eq.~\eqref{eq:equal_inters_main} where $e\leftarrow p$, $e'\leftarrow e_1$. Thus equalities of $\MC^H$ are contained in those of $\MC^H_\cap$, implying $\MC^H\supseteq \MC^H_\cap$. However, $\MC^H_\cap$ is actually a relaxation of $\MC^H$. Combining Lemma~\ref{lem:MC_H_cap_eq_T}, we have $\MC^H=\MC^H_\cap=\MC^H_T$.

\textbf{(3) $p=e_1$.} In this case, $p$ is also a hyperedge of $H$. Let $H_2'=(V_2,(E-e_1)\cup\{p\})$, then $H=H_1\cup H_2'$ satisfies the conditions of Theorem \ref{thm:decomp}, so $\MC^{H}=\overline{\MC}^{H_1}\cap \overline{\MC}^{H_2'}$. Combining Eqs.~\eqref{eq:H_repres_H_1_single_hyperedge} and \eqref{eq:H_repres_H_2_prime}, we obtain
\begin{subequations}
\begin{align}
\MC^H=\bigg\{w\in \mathbb{R}_{\geq 0}^{\mathcal{J}^{H}}\bigg|&w\ \text{satisfies Eqs.~\eqref{eq:cons_multiple_choice}--\eqref{eq:cons_T_neibr}};\\
&w_{J_0}=\sum_{J\in\mathcal{J}^{e_2}:J\supseteq J_0} w_{J},\ \forall\, J_0\in \mathcal{J}^{p}\bigg\}.\label{eq:cons_intersect_p1}
\end{align}
\end{subequations}
Similar to case (2), it also holds that $\MC^H=\MC^H_T$.
\end{description}
\end{proof}

Since Problem~\eqref{eq:Linearized_MCPP} is equivalent to $\max_{w\in \MC^H} a\cdot w$ and linear programming admits polynomial-time algorithms \cite{grotschel_ellipsoid_1981}, 
Theorem~\ref{thm:alpha_acyclic} allows us to 
evaluate the complexity of MCPP when $H$ is $\alpha$-acyclic. 
We only need to estimate the dimension of $w$ 
as well as the number of inequalities and equalities in the half-space representation of $\MC^H$.   

We define the \emph{rank} of hypergraph $H$, denoted by $r$, as the maximum cardinality of its hyperedges,
and let 
\begin{equation*}
M = \max_{e\in E} |\mathcal{J}^{e}|,\ \ \text{and}\ \ M_\cap=\max_{e,e'\in E: e\cap e'\neq \varnothing} |\mathcal{J}^{e\cap e'}|.
\end{equation*}
With $\hat{d}=\max_{I\in V} |I|$, we can bound $M$ and $M_\cap$ as $M_\cap\leq M\leq \hat{d}^r$. It is easy to see that the dimension of $w$, $|\mathcal{J}^H|$, does not exceed $M(|V|+|E|)$. Noting that $|\mathcal{E}|=|E|-1$, it is readily seen that the number of equalities in the half-space representation of $\MC^H$ \eqref{eq:MC_H_repres_alpha_acyclic} does not exceed $|V|+|E|r\hat{d}+|E|M_\cap$, and the number of inequalities is $|\mathcal{J}^H|$. 
Thus, the number of inequalities and equalities is $\mathcal{O}(|V|+|E|)$ 
providing a constant upper bound on $r$ and $\hat{d}$. Combining the fact that the number of nonzero coefficients of original form~\eqref{eq:MCPP_pseudo_Bool_def} is at least $(|V|+|E|)$, MCPP is solvable in polynomial time.

\begin{remark}
Note that MCPP~\eqref{eq:MCPP_pseudo_Bool_def} is generalization of PBO~\eqref{eq:PBO} by letting $|I|=2, I\in V$. Therefore, PBO~\eqref{eq:PBO} is also polynomial solvable when $H$ is $\alpha$-acyclic and its rank is bounded ($\hat{d}=2$ in this case, which is already bounded).
We should mention that this observation does not contradict the NP-hardness of PBO~\eqref{eq:PBO} (also when $H$ is $\alpha$-acyclic) as shown in \cite{del_pia_complexity_2023}. Since in that work, the PBO instance reduced from MAX-CUT is associated with a hypergraph $H$ with rank equal to $|V|$, thus unbounded.
\end{remark}

\section{Proof of Theorem~\ref{thm:lifted_MP0}}
\label{sec:lifted_MP_H}

%

In order to prove Theorem~\ref{thm:lifted_MP0}, we need a lemma on $\MP^H$. Let $F\subseteq \MP^H$ be a facet of $\MP^H$ and $U\subseteq V$,
and define
\begin{align}
V_1^F &= \{I\in V|\forall\, z\in \MP^H, z_{I}=1\ \text{implies}\ z\in F\}, \\
E_U &= \{e\in L(V)\cup E\cup \{\varnothing\}|e\subseteq V\setminus U, e\cup U\in L(V)\cup E\}.
\end{align}

\begin{lemma}
\label{lem:MP_H_proj_full_dim}
If hypergraph $H$ is downward-closed and $U\subseteq V\setminus V_1^F$ is nonempty, then either $E_U$ is empty, or the projection of the face $F_U:=\{z\in F|z_I=0,\forall\, I\in U\}$ onto $\mathbb{R}^{E_U\setminus\{\varnothing\}}$, i.e., $\proj_{E_U\setminus\{\varnothing\}}F_U$ is full-dimensional.
\end{lemma}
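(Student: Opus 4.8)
The plan is to argue by contradiction: assuming $E_U$ is nonempty and $\proj_{E_U\setminus\{\varnothing\}}F_U$ is not full-dimensional in $\mathbb{R}^{E_U\setminus\{\varnothing\}}$, I will produce a valid inequality for $\MP^H$ that is tight on $F$ but whose zero-set strictly contains $F$, contradicting that $F$ is a facet. Concretely, failure of full-dimensionality gives a nonzero linear functional $\sum_{e\in E_U\setminus\{\varnothing\}}\lambda_e z_e = \mu$ satisfied by every point of $F_U$. The first step is therefore to set up the relevant affine algebra: recall that $F$ is a facet, pick an inequality $c\cdot z\le\delta$ inducing it, and observe that $F_U = \{z\in F : z_I=0,\ \forall I\in U\}$ is a (possibly lower-dimensional) face of $F$. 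The key structural input is the substitution $z_I \mapsto z_I$, $z_{e} \mapsto z_{e}$ combined with the product relations $z_e=\prod_{I\in e}z_I$ valid on $\mathcal{S}^H$; together with downward-closedness of $H$ this lets me move freely between $z_{e\cup U}$ and $z_{e}$ on the subset of vertices of $\MP^H$ where $z_I=1$ for all $I\in U$.

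The heart of the argument is a \emph{lifting}/\emph{transport} construction: from the alleged affine dependence on $\proj_{E_U\setminus\{\varnothing\}}F_U$ I will manufacture an affine equation valid on all of $F$. The idea is that for a vertex $z$ of $\MP^H$ with $z_I=1$ for all $I\in U$, the correspondence $e\leftrightarrow e\cup U$ identifies $z_{e\cup U}$ with $z_e$ (since the extra factors $z_I$, $I\in U$, equal $1$), and $z_\varnothing$ should be read as the constant $1$; this is exactly why $\varnothing\in E_U$ is allowed in the definition of $E_U$ but excluded from the projection target — it contributes the constant term. So the dependence $\sum \lambda_e z_e=\mu$ on $F_U$ should transport to a dependence $\sum_{e}\lambda_e z_{e\cup U} = \mu\cdot(\text{something})$ on the opposite face $\{z\in F: z_I=1,\ \forall I\in U\}$, and then I need to glue these two affine relations — one living on $F_U$, one on the "$U$-all-ones" face of $F$ — into a single affine relation valid on $F$. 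The gluing uses that $F$ is the convex hull of its vertices, each of which has $z_I\in\{0,1\}$, so every vertex of $F$ falls into one of finitely many "patterns" on $U$; I will need the hypothesis $U\subseteq V\setminus V_1^F$ precisely to guarantee that the face $F_U$ is nonempty (some vertex of $F$ has all $z_I=0$ on $U$) so that the relation on $F_U$ is not vacuous, and I will need to handle the mixed patterns on $U$ by an induction on $|U|$ or by a telescoping multilinear-interpolation identity in the variables $z_I$, $I\in U$.

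The main obstacle I anticipate is exactly this last gluing step: the affine relation is only \emph{assumed} on the sub-face $F_U$, not on $F$, and a priori there is no reason an affine relation on one face extends to the whole polytope. The resolution I expect to pursue: show that the only obstruction to extending is the appearance of variables $z_{e'}$ with $e'\not\subseteq V\setminus U$ and $e'\cap U\subsetneq U$ (the "partial overlap" hyperedges), and that by downward-closedness every such $e'$ can be written so that, along the facet $F$, the relevant product relations let one eliminate the $z_I$, $I\in U$, dependence — because $F$ already lies in the affine hull cut out by all the equations $z_{e'}=\prod z_I$ restricted to $\aff\MP^H$ being itself the whole ambient space (Del Pia–Khajavirad: $\MP^H$ is full-dimensional), so there are no hidden equalities to fight. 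Thus any nontrivial affine relation valid on $F$ must be a multiple of $c\cdot z=\delta$, and I will check that the transported relation is genuinely new (its support on $E_U\setminus\{\varnothing\}$ is nonzero by construction while $c$ restricted there can be normalized away), delivering the contradiction. A secondary technical point to get right is the role of $\varnothing$: I must track constants carefully so that "$\proj$ onto $\mathbb{R}^{E_U\setminus\{\varnothing\}}$ not full-dimensional" is correctly translated as "an affine (not merely linear) dependence among the $z_e$, $e\in E_U\setminus\{\varnothing\}$, on $F_U$," which is where the $\{\varnothing\}$ slot in $E_U$ does its bookkeeping.
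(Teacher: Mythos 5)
Your high-level frame (contradiction, pad the alleged dependence with the $\varnothing$ slot as the constant term, invoke downward-closedness to keep the linearization inside $L(V)\cup E$, and finish using that $\MP^H$ is full-dimensional so $\aff F$ is a hyperplane) matches the paper, but the step you yourself flag as the main obstacle --- extending an affine relation from $F_U$ to all of $F$ --- is exactly where the proposal is missing the one idea that makes the proof work, and the route you sketch for it does not go through. The paper never ``transports'' the relation to the face $\{z_I=1,\ \forall I\in U\}$ and never glues anything: it simply multiplies by $\prod_{I\in U}(1-z_I)$ \emph{before} linearizing, i.e.\ it considers the single affine functional
\begin{equation*}
c\cdot z-\delta:=\mathscr{L}\Bigl(\sum_{e\in E_U} c'_e\prod_{I\in e} z_I\Bigr)\prod_{I\in U}(1-z_I),
\end{equation*}
which vanishes on \emph{every} vertex of $F$ by a two-case check: if some $z_{I}=1$ with $I\in U$ the product of the $(1-z_I)$ factors is zero, and otherwise all $z_I=0$ so the vertex lies in $F_U$ and the first factor vanishes by hypothesis. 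There are no ``mixed patterns'' to handle and no induction on $|U|$. By contrast, your transported relation $\sum_e\lambda_e z_{e\cup U}=\mu$ on the all-ones face is not known to hold there --- the dependence was only assumed on $F_U$ --- so that leg of the gluing has no justification, and the subsequent elimination argument you sketch (``no hidden equalities to fight'') does not repair it.

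Two further points. First, you say the hypothesis $U\subseteq V\setminus V_1^F$ is needed ``precisely to guarantee that $F_U$ is nonempty''; that implication is not immediate for $|U|\geq 2$ (each $I\in U$ individually admits a vertex of $F$ with $z_I=0$, but not obviously all of them simultaneously), and in any case nonemptiness of $F_U$ is a \emph{conclusion} of the lemma, not an input: the paper's argument runs even if $F_U=\varnothing$ (case (ii) is then vacuous). The hypothesis is actually consumed at the very end: once $F\subseteq\{c\cdot z=\delta\}$ forces $F=\{z\in\MP^H\mid c\cdot z=\delta\}$, the constructed functional also vanishes at every $z\in\MP^H$ with $z_I=1$ for some $I\in U$, which would put $U$ inside $V_1^F$ --- the contradiction. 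Second, your closing step (``check that the transported relation is genuinely new'') would require comparing supports with the facet-defining inequality; this is avoidable, but if you keep it you must also verify that the linearized functional is not identically zero, which follows because the top-degree monomials $z_{e\cup U}$, $e\in E_U\setminus\{\varnothing\}$, are distinct and carry coefficients $(-1)^{|U|}c'_e$.
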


\begin{proof}
Suppose $E_U$ is nonempty and the $\proj_{E_U\setminus\{\varnothing\}}F_U$ is not full-dimensional. It can be easily seen from the downward-closedness of $E$ that $\varnothing \in E_U$. 

Since $\proj_{E_U-\varnothing}F_U$ is not full-dimensional, there exists a nonzero vector $\hat{c}\in\mathbb{R}^{E_U-\varnothing}$ and $\delta'\in\mathbb{R}$ such that
\[
\hat{c}\cdot \proj_{E_U-\varnothing} z=\delta'
\]
for all $z\in F_U$. Padding $\hat{c}$ with zeros, we obtain $c'\in\mathbb{R}^{L(V)\cup E}$. Then we have for any $z\in F_U$,
\begin{equation}
c'\cdot z=\hat{c}\cdot\proj_{E_U-\varnothing} z=\delta'.
\end{equation}

For convenience, define $c'_{\varnothing}=-\delta'$ and $\prod_{I\in \varnothing} z_I=1$. 
We claim that the equality on $\mathbb{R}^{L(V)\cup E}$
\begin{equation}
\label{eq:conflict_F}
c\cdot z-\delta:=\mathscr{L}\left(\sum_{e\in E_U} c'_e\prod_{I\in e} z_I\right)\prod_{I\in U}(1-z_I)=0
\end{equation}
holds for all $z\in F$.
First, we have to certify that $\mathscr{L}$ is well-defined for $z\in \mathbb{R}^{L(V)\cup E}$. 
Expanding $\prod_{I\in U}(1-z_I)$ yields monomials of the form $\prod_{I\in e'} z_I$, where $e'\subseteq U$. Therefore, the linearization operator $\mathscr{L}$ requires components $z_{e\cup e'}$, where $e\in E_U$ and $e'\subseteq U$, with $z_\varnothing=1$. It follows from the definition of $E_U$ that
\[
e\cup e'\subseteq e\cup U\in L(V)\cup E.
\]
Combining the downward-closedness of $H$, we arrive at
\[
e\cup e'\in L(V)\cup E\cup \{\varnothing\}.
\]
Since 
\[
\sum_{e\in E_U} c'_e\prod_{I\in e} z_I=c'\cdot z-\delta'
\]
for $z\in \mathcal{S}^H$, 
now it suffices to verify that 
\begin{equation}
\label{eq:before_linearize}
(c'\cdot z-\delta')\prod_{I\in U}(1-z_I)=0
\end{equation}
holds for all vertices $z$ of $F$. Let us suppose $z\in \mathcal{S}^H\cap F$. (i) If there exists $I_1\in U$ such that $z_{I_1}=1$, then $1-z_{I_1}=0$, and the conclusion holds. (ii) If $z_I=0$ for all $I\in U$, then $z\in F_U$. By the assumption of $F_U$, we have $c'\cdot z-\delta'=0$, which means that Eq.~\eqref{eq:before_linearize} also holds.

The above dedution shows that $F\subseteq \{z\in \MP^H|c\cdot z=\delta\}$. Since $\MP^H$ is full-dimensional with $F$ being its facet, we have  $F=\{z\in \MP^H|c\cdot z=\delta\}$. However, from the definition of \eqref{eq:conflict_F}, for any $I\in U$ and $z\in \MP^H$, as long as $z_I=1$, we have $c\cdot z=\delta$, implying $z\in F$, and thus $U\subseteq V_1^F$, a contradiction.
\end{proof}

Theorem~\ref{thm:lifted_MP0} shows that Condition~\eqref{eq:lifted_facet_necc_suff} is necessary and sufficient for Eq.~\eqref{eq:lifted_valid_ineq0} to be facet-inducing. 
When the condition does not hold for $\bm S$, we construct $\bm S'$ and $\bm S''$ such that
\[
a(\bm S)\cdot w-\delta=(a(\bm S')\cdot w-\delta)+(a(\bm S'')\cdot w-\delta).
\]
Then $a(\bm S)\cdot w\leq \delta$ is sum of two valid inequalities and thus not facet-inducing. This is the basic idea for proving the necessity. As for the sufficiency, we prove it in the context of $\MC^H_\leq$ which is guaranteed by Corollary~\ref{corol:facet_MC_H_leq}. Suppose Condition~\eqref{eq:lifted_facet_necc_suff} holds and the face induced by
\begin{equation}
\label{eq:proj_a_v_delta}
\proj_\leq a(\bm S)\cdot v\leq \delta
\end{equation}
is contained in the facet induced by
\begin{equation}
\label{eq:a'_v_delta}
a'\cdot v\leq \delta'.
\end{equation}
We construct several classes of vertices of $\MC^H_\leq$ such that equality of Eq.~\eqref{eq:proj_a_v_delta} holds for them. 
By the inclusion relation, we can obtain properties of $a'$ and $\delta'$.  With the help of Lemma~\ref{lem:MP_H_proj_full_dim}, we are able to verify that Eq.~\eqref{eq:a'_v_delta} is a scalar multiplication of Eq.~\eqref{eq:lifted_valid_ineq0}, and hence they induce the same facet.

\begin{proof}[Proof of Theorem~\ref{thm:lifted_MP0}]
For convenience, let $c_{\varnothing}=-\delta$. Define the support of $c$ as
\begin{equation}
\supp c:=\{e\in L(V)\cup E\cup \{\varnothing\}|c_e\neq 0\}.
\end{equation}
It is clear that when $I\in \bigcap_{e\in \supp c} e$, if $z\in \MP^H$ satisfies $z_{I}=0$, then $z_e=0$ for all $e\in \supp c$ (note that $z_e\leq z_{I}$ is valid for $\MP^H$), thus
\begin{equation}
\sum_{e\in  L(V)\cup E\cup \{\varnothing\}} c_ez_e=\sum_{e\in \supp c} c_ez_e=0.
\end{equation}
Hence $\bigcap_{e\in \supp c} e\subseteq V_0$.

Similarly, if $I'\notin \bigcap_{e\in \supp c} e$, then there exists $e_0\in \supp c$ such that $I'\notin e_0$. We take $e_0$ to be the inclusionwise minimal one, then for any $e\in L(V)\cup E\cup \{\varnothing\}$ such that $e\subset e_0$, we have $c_e=0$ since $I'\notin e$. (Note that $e_0$ is possibly $\varnothing$.) We define $\hat{z}\in \mathcal{S}^H$ as follows. 
For each $e\in L(V)\cup E$, let $\hat{z}_e=1$ if $e\subseteq e_0$, and $\hat{z}_e=0$ otherwise. 
Now that $\hat{z}$ satisfies $\hat{z}_{I'}=0$, and
\begin{equation}
\sum_{e\in  L(V)\cup E\cup \{\varnothing\}} c_e \hat{z}_e=\sum_{e\in L(V)\cup E\cup \{\varnothing\}:e\subseteq e_0} c_e = c_{e_0}\neq 0,
\end{equation}
we obtain $I'\notin V_0$. By the arbitrariness of $I'$, it follows that $\bigcap_{e\in \supp c} e\supseteq V_0$.  
Combining $\bigcap_{e\in \supp c} e\subseteq V_0$, we have $\bigcap_{e\in \supp c} e=V_0$.

Let $\mathcal{P}^0$ denote the family of all non-empty proper subsets. We define 
\begin{equation}
a:\bigtimes_{I\in V} \mathcal{P}^0(I) \rightarrow \mathbb{R}^{\mathcal{J}^H}
\end{equation}
to represent the coefficients of $w$ on the LHS of Eq.~\eqref{eq:lifted_valid_ineq0} with argument being $(S_I)_{I\in V}$, i.e.,
\begin{equation}
a(\bm S)\cdot w=\sum_{e\in L(V)\cup E} c_e \mathscr{L} \prod_{I\in e} \sum_{i\in S_I} w_i,
\end{equation}
with $\bm S=(S_I)_{I\in V}\in \bigtimes_{I\in V} \mathcal{P}^0(I)$. To be convenient, let $a_{\varnothing}(\bm S)=-\delta$ (although we do not regard it as a component of $a(\bm S)$). It is clear that for $J\in \mathcal{J}^H$ such that $J\subseteq \bigcup_{I\in V} S_I$, $a_J(\bm S)=c_{E(J)}$, and for other $J\in \mathcal{J}^H$, $a_J(\bm S)=0$. Note that 
as long as $c_{E(J)}=0$, 
we have $a_J(\bm S)=0$. We write $\bigcup S_I:=\bigcup_{I\in V} S_I$ in short.
\\

\textit{Necessity.} We show necessity by proving that if there exists $I_0\in V_0$ such that $|S_{I_0}|>1$, or $I_1\in V_1$ such that $|S_{I_1}|<|I_1|-1$, then $a(\bm S)\cdot w\leq \delta$ is not facet-inducing for $\MC^H$. We distinguish two cases.

\textit{Case 1: there exists $I_0\in V_0$ such that $|S_{I_0}|>1$. }Let $S_{I_0}=S'_{I_0}\sqcup S''_{I_0}$ be a disjoint union, where $S'_{I_0}$ and $S''_{I_0}$ are nonempty. Then define
\begin{equation}
S'_I=\begin{cases}
S_I, & I\neq I_0\\
S'_{I_0}, & I=I_0
\end{cases},
\ \text{and}\ 
S''_I=\begin{cases}
S_I, & I\neq I_0\\
S''_{I_0}, & I=I_0
\end{cases}.
\end{equation}

We claim that
\begin{equation}
a(\bm S)=a(\bm S')+a(\bm S'').
\end{equation}
To prove the assertion, we verify the equality for its all components $J\in \mathcal{J}^H$. 
(i) If $a_J(\bm S)\neq 0$, then $J\subseteq \bigcup S_I$ and $E(J)\in \supp c$.
Since 
$V_0=\bigcap_{e\in \supp c} e\subseteq E(J)$ and $I_0\in V_0$, 
$J\cap I_0$ is a singleton. Let $J\cap I_0=\{i_0\}$, then we have $i_0\in S_{I_0}$. Noting that $S_{I_0}=S'_{I_0}\sqcup S''_{I_0}$ is a disjoint union, either $i_0\in S'_{I_0}$ or $i_0\in S''_{I_0}$ holds. 
When $i_0\in S'_{I_0}$, we have $J\subseteq \bigcup S'_I$ and $J\nsubseteq \bigcup S''_I$, thus $a_J(\bm S')=c_{E(J)}$ and $a_J(\bm S'')=0$; similarly when $i_0\in S''_{I_0}$, we have $a_J(\bm S')=0$ and $a_J(\bm S'')=c_{E(J)}$. 
In both cases, $a_J(\bm S)=a_J(\bm S')+a_J(\bm S'')$. (ii) If $a_J(\bm S)=0$, then at least one of $J\nsubseteq \bigcup S_I$ and $c_{E(J)}=0$ holds, it is easy to verify that $a_J(\bm S')=a_J(\bm S'')=0$, thus $a_J(\bm S)=a_J(\bm S')+a_J(\bm S'')$. Therefore, the assertion holds.

Since  $V_0=\bigcap_{e\in \supp c} e$ is nonempty, we have $\varnothing\notin \supp c$, implying $\delta=-a_{\varnothing}=0$. Thus
\begin{equation}
a(\bm S)\cdot w-\delta=(a(\bm S')\cdot w-\delta)+(a(\bm S'')\cdot w-\delta).
\end{equation}
By the definition of $a(\cdot)$, $a(\bm S')\cdot w\leq\delta$ is the lifted inequality of $c\cdot z\leq \delta$ w.r.t. $\bm S'$, thus also valid and not an implicit equality. The same property also holds for $a(\bm S'')\cdot w\leq\delta$. Moreover, considering the nonzero components of $a(\bm S)$ and $a(\bm S')$, it can be easily verified that $a(\bm S)\cdot w\leq\delta$ is not a scalar multiplication of $a(\bm S')\cdot w\leq\delta$. Therefore, as the sum of two valid inequalities, $a(\bm S)\cdot w\leq\delta$ is not facet-inducing.

\textit{Case 2: there exists $I_1\in V_1$ such that $|S_{I_1}|<|I_1|-1$. }Define $\bm S'$ by $S_{I_1}'=I_1\setminus S_{I_1}$ and  $S_I'=S_I$ for $I\neq I_1$. Thus, $|S_{I_1}'|>1$. Since for all 
$w\in \mathscr{S}^H$, $w$ satisfies the multiple choice constraints
\[
\sum_{i\in S_{I_1}}w_i+\sum_{i\in S_{I_1}'} w_i=1,
\]
we can replace $\sum_{i\in S_{I_1}}w_i$ with $1-\sum_{i\in S_{I_1}'} w_i$ in Eq.~\eqref{eq:lifted_valid_ineq0} before applying $\mathscr{L}$, then
\begin{subequations}
\label{eq:{eq:flipped_Ipp_all}}
\begin{align}
&\sum_{e\in L(V)\cup E} c_e \mathscr{L} \prod_{I\in e} \sum_{i\in S_I} w_i-\delta\\
=&-\delta+\sum_{e\in L(V)\cup E: e\not\ni I_1} c_e \mathscr{L} \prod_{I\in e} \sum_{i\in S_I'} w_i\\
&+\sum_{e\in L(V)\cup E: e\ni I_1} c_e \mathscr{L}\left(1-\sum_{i\in S_{I_1}'} w_i\right) \prod_{I\in e-I_1} \sum_{i\in S_I'} w_i. \label{eq:flipped_w_S_J1}
\end{align}
\end{subequations}
The possibility of linearization operator of the part Eq.~\eqref{eq:flipped_w_S_J1} being performed within the indices in $\mathcal{J}^H$ is guaranteed by the downward-closedness of $H$.

Similar to the $\psi_U$ defined in Eqs.~(3) and (4) of \cite{del_pia_polyhedral_2017} in the case of $|U|=1$, we define the affine mapping $\psi^{I}:\mathbb{R}^{L(V)\cup E}\rightarrow\mathbb{R}^{L(V)\cup E}$ for each $I\in V$ flipping the $I$th entry as follows:
\begin{subequations}
\begin{alignat}{1}
\psi^{I}_{e}(z)&=\mathscr{L}(1-z_{I})\prod_{\bar{I}\in e-I} z_{\bar{I}}, \ I\in e;\\
\psi^{I}_{e}(z)&=z_e, \ I\notin e,
\end{alignat}
\end{subequations}
where $e\in L(V)\cup E$. It is clear that $\psi^{I}$ is a bijection on $\mathcal{S}^H$. Now let 
\begin{equation}
\label{eq:reversed_c_prime_delta_prime}
c'\cdot z-\delta'=c\cdot \psi^{I_1}(z)-\delta.
\end{equation}
Then $c'\cdot z\leq \delta'$ is also facet-inducing.
Expressing Eq.~\eqref{eq:reversed_c_prime_delta_prime} more explicitly, we have
\begin{subequations}
\begin{align}
c'\cdot z-\delta'=\sum_{e\in L(V)\cup E: e\not \ni I_1} c_e z_e+\sum_{e\in L(V)\cup E: e \ni I_1} c_e \mathscr{L}(1-z_{I_1}) \prod_{I\in e-I_1} z_{I}-\delta.
\end{align}
\end{subequations}
Lifting the RHS of the above equation w.r.t. $\bm S'$ yields exactly the RHS of Eq.~\eqref{eq:{eq:flipped_Ipp_all}}, so does the LHS. 
Therefore, 
\begin{equation}
\sum_{e\in L(V)\cup E} c'_e \mathscr{L} \prod_{I\in e} \sum_{i\in S_I'} w_i-\delta'=
\sum_{e\in L(V)\cup E} c_e \mathscr{L} \prod_{I\in e} \sum_{i\in S_I} w_i-\delta
\end{equation}
holds for all $w\in \mathscr{S}^H$, thus an implicit equality for $\MC^H$. %
From the definition of $V_1$ in Eq.~\eqref{eq:def_V_0_V_11}, it follows that for any $z\in \MP^H$, if $z_{I_1}=0$, then $\psi^{I_1}_{I_1}(z)=1$, hence $c'\cdot z-\delta'=c\cdot \psi^{I_1}(z)-\delta=0$. Therefore, $I_1$ belongs to
\begin{equation}
V_0':=\{I\in V|\forall\, z\in \MP^H, z_{I}=0 \ \text{implies}\ c'\cdot z=\delta'\}.
\end{equation}
It is clear that $V_0'$ w.r.t. $c'\cdot z \leq\delta'$ corresponds to $V_0$ in Eq.~\eqref{eq:def_V_0_V_1} w.r.t. $c\cdot z \leq\delta$. Combining $|S_{I_1}'|>1$ and the argument in Case 1, we conclude that
\[
\sum_{e\in L(V)\cup E} c'_e \mathscr{L} \prod_{I\in e} \sum_{i\in S_I'} w_i\leq \delta'
\]
is not facet-inducing, so $a(\bm S)\cdot w\leq \delta$ is not facet-inducing either.\\

\textit{Sufficiency. } Suppose that the given $\bm{S}$ satisfies Condition \eqref{eq:lifted_facet_necc_suff}. In this part of the proof, we do not consider other $a(\bm{S})$, so we simply write $a$ for $a(\bm{S})$.

Considering the flipping technique described earlier, we can sequentially flip the components in $V_0$ of $z$. For each step, suppose we flip the entry $I'$, then we replace $c\cdot z\leq \delta$ with $c\cdot \psi^{I'}(z)\leq \delta$, and replace $S_{I'}$ with $I'\setminus S_{I'}$ correspondingly. By the observations regarding $V_0$ and $V_0'$ we made before, $V_0$ and $V_1$ will change to $V_0-I'$ and $V_1\cup \{I'\}$ respectively. Therefore, after all the flipping, we will arrive at $V_0=\varnothing$. Consequently, we may assume that $V_0=\varnothing$ without loss of generality.


For each $I\in V$, let $\bar{i}_{I}\in I\setminus S_{I}$ be arbitrarily chosen index, and define $D=\{\bar{i}_{I}\}_{I\in V}$.  Then for all $J\in\mathcal{J}^H$, $a_J\neq 0$ implies $J\in \mathcal{J}^H_\leq$. We will show that
\begin{equation}
\label{eq:valid_ineq_MC_leq}
\proj_{\mathcal{J}^H_\leq}{a}\cdot v\leq \delta
\end{equation}
is facet-inducing for $\MC_\leq^H$ in Eq.~\eqref{eq:MCPP_MC_leq_def}. By Corollary~\ref{corol:facet_MC_H_leq}, $a\cdot w\leq \delta$ is also facet-inducing for $\MC^H$. 


Let $F$ be the face induced by \eqref{eq:valid_ineq_MC_leq}, which is 
 contained in the facet $F'$ induced by $a'\cdot v \leq \delta'$, where $a',v\in\mathbb{R}^{\mathcal{J}^H_\leq}$. We set $a'_{\varnothing}=-\delta'$. To establish the sufficiency, we only need to demonstrate that $F$ is identical to $F'$, and the main task is to prove that there exists $\lambda>0$ such that
\begin{equation}
a'_J=\lambda a_J,\ \forall\, J\in \mathcal{J}^H_{\leq}\cup\{\varnothing\}.
\end{equation}


We first prove the case where $J\subseteq \bigcup S_I$.
Arbitrarily choose $\tilde{i}_I\in S_I$, and define $R=\{\tilde{i}_I\}_{I\in V}$. It is clear that $R\in \mathcal{J}^V|_{\bigcup S_I}$. Define a mapping $\mathcal{T}^R: \mathcal{S}^H\rightarrow \MC^H_\leq$ given by $z\mapsto v$ as follows:
\begin{subequations}
\begin{alignat}{2}
v_{\tilde{i}_I}&=z_{I}, &\quad I&\in V;\\
v_i&=0,&\quad i&\in [n]\setminus R,
\end{alignat}
\end{subequations}
and $v_J=\prod_{i\in J} v_i$ for all $J\in \mathcal{J}^H_{\leq}$ where $|J|>1$ to ensure $v\in \MC^H_\leq$.

It is easily seen that for $J\in \mathcal{J}^H_{\leq}$, 
$v_J\neq 0$ 
implies $J\subseteq R$, i.e., $J\in \mathcal{J}^H|_R$. In this case, 
\begin{equation}
v_J=\prod_{\tilde{i}_I\in J} v_{\tilde{i}_I}=\prod_{I\in E(J)} z_{I}=z_{E(J)}.
\end{equation}
The relation between $J$ and $E(J)$ is indeed a one-to-one correspondence between $\mathcal{J}^H|_R$ and $L(V)\cup E$. We denote the inverse mapping that maps $e=E(J)$ to $J=J(e,R)=R\cap \bigcup_{I\in e} S_I$. Then,
\begin{equation}
\label{eq:a_prime_z_to_v}
\sum_{e\in L(V)\cup E} a_{J(e,R)}'z_e=\sum_{J\in \mathcal{J}^H|_R} a_{J}'v_J=a'\cdot v\leq \delta'.
\end{equation}
By the arbitrariness of $z\in \mathcal{S}^H$, it follows that
\begin{equation}
\label{eq:a_prime_valid_ineq_z}
\sum_{e\in L(V)\cup E} a_{J(e,R)}'z_e\leq \delta'
\end{equation}
is valid for $\MP^H$.

Since $\sum_{i\in S_I} v_i=v_{\tilde{i}_I}=z_I$, we have
\begin{align*}
\proj_{\mathcal{J}^H_\leq} a\cdot v
&=\sum_{e\in L(V)\cup E}c_e\mathscr{L}\prod_{I\in e}\sum_{i\in S_I} v_i\\
&=\sum_{e\in L(V)\cup E}c_e\prod_{I\in e}z_{I}\\
&=c\cdot z.
\end{align*}

We denote the vertices on $\MP^H$ that satisfy $c\cdot z=\delta$ as $z^1,z^2,\dots,z^s\in \mathcal{S}^H$.
Define $v^k=\mathcal{T}^R(z^k)$ for $k=1,2,\dots,s$. Then $\proj_{\mathcal{J}^H_\leq} a\cdot v^k=c\cdot z^k=\delta$, thus $v^k\in F$, and consequently, Eq.~\eqref{eq:a_prime_z_to_v} becomes equality for $v\leftarrow v^k$. Hence, $z^k$ also makes Eq.~\eqref{eq:a_prime_valid_ineq_z} become equality, which implies that the facet induced by $c\cdot z\leq \delta$ is contained in the face defined by Eq.~\eqref{eq:a_prime_valid_ineq_z}. 
That is, Eq.~\eqref{eq:a_prime_valid_ineq_z} is also facet-inducing. Since $\MP^H$ is full-dimensional, these two facet-inducing inequalities, Eq.~\eqref{eq:a_prime_valid_ineq_z} and $c\cdot z\leq \delta$, are positive scalar multiple of each other. Namely, there exists a scalar $\lambda_{R}>0$ such that
\begin{equation}
\label{eq:lambda_a_J}
a'_{J(e,R)}=\lambda_{R} c_e,\ \forall\, e\in L(V)\cup E\cup \{\varnothing\},
\end{equation}
where we define $J(\varnothing,R)=\varnothing$.
By the definition of $\supp c$, the above equation implies
\begin{equation}
\label{eq:lambda_R_a_prime_c}
\lambda_{R}=a'_{J(e,R)}/c_e,\ \forall\, e\in \supp c.
\end{equation}
This equality  holds for any $R\in \mathcal{J}^V|_{\bigcup S_I}$. Note that given one $J\in \mathcal{J}^H|_{\bigcup S_I}\cup\{\varnothing\}$ such that $E(J)\in \supp c$, for all $R\in \mathcal{J}^V|_{\bigcup S_I}$ such that $R\supseteq J$, we have $J(E(J),R)=J$, implying that the corresponding $\lambda_R$ are identical.
Since $\bigcap_{e\in \supp c} e=V_0=\varnothing$, we can prove the equality of all $\lambda_R$s by selecting  different $J$s.

We strictly prove as follows: Suppose to the contrary that there exist $R,R'\in \mathcal{J}^V|_{\bigcup S_I}$ such that $\lambda_R\neq \lambda_{R'}$, then let $R,R'$ be the pair that maximizes $|R\cap R'|$ among them. Let $i_{I'}\in R\setminus R'$ and $i_{I'}'\in R'$, where $i_{I'},i'_{I'}\in S_{I'}\subset I'$. Obviously, $i_{I'}\neq i_{I'}'$. Since $\bigcap_{e\in \supp c} e=\varnothing$, there exists $e_0\in \supp c$ such that $I'\notin e_0$. Let $\hat{R}=(R-i_{I'})\cup\{i'_{I'}\}$, then $|\hat{R}\cap R'|=|R\cap R'|+1$. It can be readily observed that
\begin{align*}
J(e_0,R)&=R\cap \bigcup_{I\in e_0} S_I\\
&=(R-i_{I'})\cap \bigcup_{I\in e_0} S_I\\
&=(\hat{R}-i'_{I'})\cap \bigcup_{I\in e_0} S_I\\
&=J(e_0,\hat{R}),
\end{align*}
so $\lambda_R=a'_{J(e,R)}/c_e=a'_{J(e,\hat{R})}/c_e=\lambda_{\hat{R}}$, thus $\lambda_{\hat R}\neq \lambda_{R'}$, contradicting the maximality of $|R\cap R'|$.

Hence, we can let $\lambda>0$ such that $\lambda_R=\lambda,\forall\, R\in \mathcal{J}^V|_{\bigcup S_I}$. Now Eq.~\eqref{eq:lambda_a_J} becomes
\begin{equation}
a'_{J(e,R)}=\lambda c_e,\ \forall\, e\in L(V)\cup E\cup \{\varnothing\}.
\end{equation}
As $e$ ranges over $L(V)\cup E\cup \{\varnothing\}$ and $R$ ranges over $\mathcal{J}^V|_{\bigcup S_I}$, $J(e,R)$ ranges over $\mathcal{J}^H|_{\bigcup S_I}\cup\{\varnothing\}$. Thus, for any $J\in \mathcal{J}^H|_{\bigcup S_I}\cup \{\varnothing\}$, we have
\begin{equation}
a'_J=\lambda c_{E(J)}=\lambda a_J.
\end{equation}


Next, we only need to prove the case where $J\in\mathcal{J}^H_\leq$ but $J\nsubseteq \bigcup S_I$, and we will show that $a'_J=0$. Combining with $a_J=0$, we also have $a'_J=\lambda a_J$. 

Previously, we defined $v^k\in F$, and here we are going to construct more vertices on $F$ to obtain more properties using $\proj_{\mathcal{J}^H_\leq} a\cdot v=\delta$ when $v$ is on $F$.

Recall that $z^1,z^2,\dots,z^s\in \mathcal{S}^H$ are vertices of the facet defined by $c\cdot z\leq\delta$. Given any $U\subseteq V\setminus V_1$, let $K_U=\{k\in[s]|z^k_{I}=0,\forall\, I\in U\}$ be the index of these vertices where the components corresponding to $I\in U$ are all zero. 
We have 
\[
F_U = \{z\in \MP^H|c\cdot z=\delta;z_{I}=0,\forall\, I\in U\} =\conv \{z^k|k\in K_U\}. 
\]
For any $\tilde{i}_I\in S_I,I\in V\setminus U$ and $\tilde{i}_I\in (I-\bar{i}_I)\setminus S_I, I\in U$, we can define two sets of $v^k, \bar{v}^k\in \MC^H, k\in K_U$ as follows,
\begin{subequations}
\label{eq:v_k_bar_defn}
\begin{alignat}{2}
\bar{v}^k_{\tilde{i}_I} &= 1, &\quad I &\in U,\\
\bar{v}^k_{\tilde{i}_I} &= z^k_{I}, & I &\in V\setminus U,
\end{alignat}
\end{subequations}
and for remaining $i\in [n]$, $\bar{v}^k_i=0$. The definition of $v^k,k\in K_U$ is similar to before. 
Let $R=\{\tilde{i}_I\}_{I\in V}$, and $v^k=\mathcal{T}^R(z^k)$, i.e.,
\begin{equation}
\label{eq:v_k_defn}
v^k_{\tilde{i}_I}=z^k_{I},\quad I\in V\setminus U,
\end{equation}
and for remaining $i\in [n]$, $v^k_i=0$. $\bar{v}^k_J$ and $v^k_J$ for $|J|>1$ are defined by production constraints.
The only difference between $\bar{v}^k$ and $v^k$ is that $\bar{v}^k_{\tilde{i}_I}=1$ while $v^k_{\tilde{i}_I}=z^k_I=0$, where $I\in U$, and that the components $\bar{v}^k_J$ and $v^k_J$ with $J$ containing $\tilde{i}_I$. Note that $z^k_{I}=0, \forall\, k\in K_U, I\in U$, so we still have
\begin{equation}
\sum_{i\in I} v^k_i=\sum_{i\in I} \bar{v}^k_i=z^k_{I},
\end{equation}
and hence
\begin{equation}
a\cdot \bar{v}^k=a\cdot v^k=c\cdot z^k=\delta,\ \forall\, k\in K_U.
\end{equation}
By the assumption on $F$ and $F'$, $v^k,\bar{v}^k$ are on the face $F\subseteq F'$, implying
\begin{equation}
a'\cdot (\bar{v}^k-v^k)=a'\cdot \bar{v}^k-a'\cdot v^k=0.
\end{equation}

Next, we will express $a'\cdot (\bar{v}^k-v^k)$ in detail. Similar to the previous definition, 
let $J(e,R):=R\cap \bigcup_{I\in e} S_I$.

It is easily seen that for all $J\in \mathcal{J}^H_\leq$, $v^k_J\leq \bar{v}^k_J$. Thus $\bar{v}^k_J-v^k_J\neq 0$ if and only if $\bar{v}^k_J=1$ and $v^k_J=0$. By definitions Eqs.~\eqref{eq:v_k_bar_defn} and \eqref{eq:v_k_defn}, this holds only if
\begin{equation}
\label{eq:J_cond_neq_0}
J\cap\bigcup_{I\in U} I \neq \varnothing\ \text{and} \ J\subseteq R.
\end{equation}
Suppose $J_0$ satisfies the above condition and $e_0=E(J_0)$, then $J_0=J(e_0,R)$. Partition $e_0$ into subsets of $V\setminus U$ and $U$ respectively, denoted by $e'$ and $U'$, and defined by $e'=e_0\setminus U$, $U'=e_0\cap U$. Since 
$J\cap\bigcup_{I\in U} I\neq \varnothing$, we have 
$U'\neq \varnothing$.


For any nonempty $\bar{U}\subseteq V\setminus V_1$,  let 
\begin{equation}
E_{\bar{U}}=\{e\in L(V)\cup E\cup \{\varnothing\}|e\subseteq V\setminus \bar{U}, e\cup \bar{U}\in L(V)\cup E\}.
\end{equation}
Since $e'\cup U'=e_0\in L(V)\cup E$, we have $e'\in E_{U'}$. Note that for any $I\in U$, $\bar{v}^k_{\tilde{i}_I}=1$, thus
\begin{align*}
\bar{v}^k_{J_0}&=\prod_{i\in J(e',R)} \bar{v}^k_i\prod_{i\in J(U',R)} \bar{v}^k_i\\
&=\prod_{i\in J(e',R)} \bar{v}^k_i\\
&= \bar{v}^k_{J(e',R)}\\
&=z^k_{e'},
\end{align*}
where we define $\bar{v}^k_\varnothing=1$ and $z^k_\varnothing=1$.

Now $a'\cdot (\bar{v}^k-v^k)$ can be expressed by summing all $a'_J\cdot (\bar{v}^k_J-v^k_J)$ such that $J$ satisfies Eq.~\eqref{eq:J_cond_neq_0}. Noting that in this case $v^k_J=0$ (but $\bar{v}^k_J=1$ may not hold, as Eq.~\eqref{eq:J_cond_neq_0} is a necessary condition for $v^k_J\neq \bar{v}^k_J$), we have
\begin{subequations}
\label{eq:deduct_zero}
\begin{align}
0 &=a'\cdot (\bar{v}^k-v^k) \\
&= \sum_{U'\subseteq U: U'\neq\varnothing} \sum_{e'\in E_{U'}: e'\subseteq V\setminus U} a'_{J(e'\cup U',R)} \bar{v}^k_{J(e'\cup U',R)}\\
&= \sum_{U'\subseteq U: U'\neq\varnothing} \sum_{e'\in E_{U'}: e'\subseteq V\setminus U} a'_{J(e'\cup U',R)} z^k_{e'}.\label{eq:sum_U_prime_a_J_z_e}
\end{align}
\end{subequations}
The above equation holds for $k\in K_U$.

Now, we can prove $a'_J=0$ by induction on $|J\setminus \bigcup S_I|$. Note that we assume $J\in \mathcal{J}^H_\leq$ and $J\nsubseteq \bigcup S_I$, so $|J\setminus \bigcup S_I|>0$.

When $|J\setminus \bigcup S_I|=1$, let $e_0=E(J)$ and define
$\tilde{i}_I=J\cap I$ for $I\in e_0$. 
For $I\notin e_0$, we arbitrarily choose $\tilde{i}_I\in S_I$. Let $R=\{\tilde{i}_I\}_{I\in V}$, then $R\supseteq J$. Suppose $J\setminus \bigcup S_I=\{\tilde{i}_{I_0}\}$ and let $U=\{I_0\}$. Note that $\tilde{i}_{I_0}\in (I_0-\bar{i}_{I_0})\setminus S_{I_0}$ and $S_I=I-\bar{i}_I,\forall\, I\in V_1$, so $U\subseteq V\setminus V_1$. We define $K_U$ and $\bar{v}^k$, $v^k$, $k\in K_U$, by Eqs.~\eqref{eq:v_k_bar_defn} and \eqref{eq:v_k_defn}. Since $|U|=1$, Eq.~\eqref{eq:deduct_zero} becomes
\begin{equation}
\label{eq:a_J_prime_zero}
\sum_{e'\in E_{U}: e'\subseteq V\setminus U} a'_{J(e'\cup U,R)} z^k_{e'}=0.
\end{equation}
Note that the condition $e'\subseteq V\setminus U$ for summation can be omitted since it is already defined in $E_U$.
As $e_0\supseteq U$ and $e_0\in L(V)\cup E$, we know that $E_U$ is nonempty. By Lemma~\ref{lem:MP_H_proj_full_dim},
the projection of $F_U=\conv\{z^k|k\in K_U\}$ onto $\mathbb{R}^{E_U-\varnothing}$ 
is full-dimensional. However, Eq.~\eqref{eq:a_J_prime_zero} implies that for all points $z'$ on $\proj_{E_U-\varnothing} F_U$, it holds that
\begin{equation}
\sum_{e'\in E_{U}-\varnothing} a'_{J(e'\cup U,R)} z'_{e'}=a'_{J(U,R)},
\end{equation}
hence $a'_{J(e'\cup U,R)}=0,\forall\, e'\in E_U$. In particular, when $e'=e_0\setminus U$, we have $J=J(e'\cup U,R)$, yielding $a'_J=0$.

Suppose $|J\setminus \bigcup S_I|>1$. 
Similar to the previous case, let $e_0=E(J)$, define
$\tilde{i}_I=J\cap I$ for $I\in e_0$, 
and arbitrarily choose $\tilde{i}_I\in S_I$ for $I\notin e_0$. Likewise, we let $R=\{\tilde{i}_I\}_{I\in V}$ and $U=\{I\in V|\tilde{i}_{I}\in J\setminus \bigcup S_I\}$. We also define $K_U$ and $\bar{v}^k$, $v^k$ for $k\in K_U$. Again, we have $U\subseteq V\setminus V_1$ and Eq.~\eqref{eq:deduct_zero} holds.

Considering the index $U'$ of the outer sum in Eq.~\eqref{eq:sum_U_prime_a_J_z_e} where $U'\subset U$, it is easy to see that $|J(e'\cup U',R)\setminus \bigcup S_I|=|J(U',R)|=|U'|<|U|$, so the induction hypothesis implies $a'_{J(e'\cup U',R)}=0$. These parts of the outer sum vanish and the remainder is summing over $U'=U$, hence
\begin{equation}
\sum_{e'\in E_{U}} a'_{J(e'\cup U,R)} z^k_{e'}=0,\ \forall\, k\in K_U.
\end{equation}
Applying Lemma~\ref{lem:MP_H_proj_full_dim}, we conclude that $a_{J(e'\cup U,R)}=0$ holds for any $e'\in E_U$. Since $J=J(e_0,R)$ and $e_0\supseteq U$, we have $a'_J=0$.

In conclusion, there exists $\lambda > 0$ such that for any $J \in \mathcal{J}^H_\leq \cup \{\varnothing\}$, we have $a'_J = \lambda a_J$. Namely,
\[
a'\cdot v-\delta'=\lambda(a\cdot v-\delta),
\]
which indicates that the face defined by $a \cdot v \leq \delta$ is the same as the facet induced by $a' \cdot v \leq \delta'$. The sufficiency holds.
\end{proof}

In fact, the above proof also shows that when Eq.~\eqref{eq:lifted_facet_necc_suff} holds, Eq.~\eqref{eq:lifted_valid_ineq0} is also facet-inducing for $\MC^H_\leq(D)$ where $D\in \mathcal{J}^V$ and $D\subseteq [n]\setminus\bigcup S_I$. Thus we have the following corollary.
\begin{corollary} 
\label{corol:lifted_MC_leq}
Suppose the hypergraph $H(V,E)$ is downward-closed and $D\in \mathcal{J}^V$. Let $c \cdot z \leq \delta$ be a facet-inducing inequality for $\MP^H$. $V_0$ and $V_1$ are defined by Eqs.~\eqref{eq:def_V_0_V_1} and \eqref{eq:def_V_0_V_11}, respectively. 
For any $S_I\subset I\setminus D, S_I\neq \varnothing, I\in V$, the inequality
\begin{equation*}
\sum_{e\in L(V)\cup E} c_e \mathscr{L} \prod_{I\in e} \sum_{i\in S_I} v_i\leq \delta
\end{equation*}
is facet-inducing for $\MC^H_\leq$ if and only if Condition \eqref{eq:lifted_facet_necc_suff} holds.
\end{corollary}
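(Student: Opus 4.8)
The plan is to obtain Corollary~\ref{corol:lifted_MC_leq} as an immediate consequence of Theorem~\ref{thm:lifted_MP0} together with Corollary~\ref{corol:facet_MC_H_leq}, without reopening the arguments of this section. First I would record the elementary equivalence behind the two ways the hypothesis is phrased: since $D\cap I=\{\bar i_I\}$ is a singleton and $S_I\subseteq I$, requiring $S_I\subset I\setminus D$ for every $I\in V$ is the same as $D\cap\bigcup_{I\in V}S_I=\varnothing$, that is, $D\subseteq[n]\setminus\bigcup_{I\in V}S_I$. In particular each $S_I$ is then a nonempty proper subset of $I$, so $\bm S=(S_I)_{I\in V}$ is a legitimate argument for Theorem~\ref{thm:lifted_MP0}.

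Second I would pass to the $w$-coordinates of $\MC^H$. Writing $a(\bm S)\cdot w\leq\delta$ for the lifted inequality \eqref{eq:lifted_valid_ineq0} exactly as in the proof of Theorem~\ref{thm:lifted_MP0}, its nonzero coefficients are indexed by those $J\in\mathcal{J}^H$ with $J\subseteq\bigcup_{I\in V}S_I$; by the disjointness just noted every such $J$ lies in $[n]\setminus D$, hence in $\mathcal{J}^H_\leq(D)$. Thus $\{J\in\mathcal{J}^H\mid a_J(\bm S)\neq 0\}\subseteq\mathcal{J}^H_\leq(D)$, and --- using the downward-closedness of $H$ to see that $\mathscr{L}$ only produces monomials $\prod_{i\in J}v_i$ with $J\in\mathcal{J}^H_\leq(D)$ --- the inequality displayed in the corollary is precisely $\proj_{\mathcal{J}^H_\leq(D)}a(\bm S)\cdot v\leq\delta$. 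Since $a(\bm S)\cdot w\leq\delta$ is valid for $\MC^H$ (shown in the introduction) and $H$ is downward-closed, Corollary~\ref{corol:facet_MC_H_leq} applies and yields: $\proj_{\mathcal{J}^H_\leq(D)}a(\bm S)\cdot v\leq\delta$ is facet-inducing for $\MC^H_\leq(D)$ if and only if $a(\bm S)\cdot w\leq\delta$ is facet-inducing for $\MC^H$.

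Finally I would invoke Theorem~\ref{thm:lifted_MP0}, which states that $a(\bm S)\cdot w\leq\delta$ is facet-inducing for $\MC^H$ exactly when Condition~\eqref{eq:lifted_facet_necc_suff} holds (with $V_0,V_1$ as in \eqref{eq:def_V_0_V_1}--\eqref{eq:def_V_0_V_11}, which do not depend on $D$). Composing this equivalence with the one coming from Corollary~\ref{corol:facet_MC_H_leq} gives the claim. I do not anticipate a real obstacle here: the substance was already discharged in the proofs of Theorem~\ref{thm:lifted_MP0} and Corollary~\ref{corol:facet_MC_H_leq}, and all that remains is the support bookkeeping ensuring that the lifted coefficient vector is carried by $\mathcal{J}^H_\leq(D)$. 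One could instead reread the sufficiency half of the proof of Theorem~\ref{thm:lifted_MP0}, which already establishes facet-inducingness inside an auxiliary $\MC^H_\leq(D')$ with $\bar i_I\in I\setminus S_I$, the only added remark being that $D'$ may be taken equal to the prescribed $D$; routing through Corollary~\ref{corol:facet_MC_H_leq} is cleaner, however, because the internal flipping step in that proof temporarily modifies the $S_I$ on $V_0$.
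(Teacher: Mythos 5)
Your proposal is correct and matches the paper's intent: the paper justifies this corollary with the one-line remark that the proof of Theorem~\ref{thm:lifted_MP0} already establishes facet-inducingness in $\MC^H_\leq(D)$ for $D\subseteq[n]\setminus\bigcup S_I$, and your cleaner formalization via Corollary~\ref{corol:facet_MC_H_leq} (after checking that $\supp a(\bm S)\subseteq\mathcal{J}^H|_{\bigcup S_I}\subseteq\mathcal{J}^H_\leq(D)$) composed with Theorem~\ref{thm:lifted_MP0} is exactly the right bookkeeping, and also handles the necessity direction explicitly. Your closing observation about the flipping step temporarily moving $\bar i_I$ into $S_I$ for $I\in V_0$ is a genuine subtlety of the ``reread the proof'' route, and routing through Corollary~\ref{corol:facet_MC_H_leq} correctly sidesteps it.
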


There exists a similar operation named ``copying'' proposed in \cite{barmann_bipartite_2023} that introduces classes of valid inequalities for $P(G,\mathcal{I})$ inherited from BQP. The work specified two classes among them in its Theorems~3.6 and 3.7. Since BQP with multiple choice constraints is a special case of $\MC^H_\leq$ (which is isomorphic to $\MC^H$ when $H$ is a graph), it can be quickly verified that these theorems are corollaries to Corollary~\ref{corol:lifted_MC_leq}.  

\begin{example}
\label{eg:lifted_barmann}
Let $G(X\cup Y,E)$ be a bipartite graph with $X$ and $Y$ being color classes of $G$. 
Sections~3.2.1 and 3.2.2 of \cite{barmann_bipartite_2023} discussed the so called cycle inequalities and $I_{mm22}$ Bell inequalities of BQP, respectively, and provided two classes of facet-inducing inequalities for $P(G,\mathcal{I})$ which is achiedved by applying 0-lifting and copy operation to them. (See Theorems~3.6 and 3.7 of the paper.) The condition for $G$ is subset-uniform. Let $\mathcal{G}=(\mathcal{I}\cup Y,\mathcal{E})$ achieved by merging the vertices in each subset of the partition $\mathcal{I}$ into a single vertex. Since BQP is a special case of
  multilinear polytope, we can express the BQP w.r.t. $\mathcal{G}$ by $\MP^{\mathcal{G}}$.
The cycle inequality w.r.t. cycle $\{I_1I_2,I_2I_3,\dots,I_mI_1\}\subseteq \mathcal{E}$ reads
\begin{equation}
\label{eq:cycle_ineq}
-z_{I_1I_m}+\sum_{p=1}^{m-1} z_{I_p I_{p+1}}-\sum_{p=2}^{m-1} z_{I_p}\leq 0.
\end{equation}
Since $\mathcal{G}$ is bipartite, $m$ is even and $m\geq 4$. We assume that $I_1,I_3,\dots,I_{m-1}\in \mathcal{I}$.
It can be easily verified that $V_0=V_1=\varnothing$ for Eq.~\eqref{eq:cycle_ineq}. 
Define $H(V,\hat{E})$ and $D$ by Eqs.~\eqref{eq:V_from_P_G_I}--\eqref{eq:D_from_P_G_I} in Example~\ref{eg:BQP_MC_H}. Then $P(G,\mathcal{I})$ is in one-to-one correspondence with $\MC^H_\leq$. 
By Corollary~\ref{corol:lifted_MC_leq}, for any nonempty $S_{I_p}\subset I_p, p=1,3,\dots,m-1$ along with $S_{I_p}=I_p,p=2,4,\dots,m$, the lifted inequality of Eq.~\eqref{eq:cycle_ineq} w.r.t. $S_{I_p}$ is facet-inducing, and it is the same to the cycle+copying inequalities,  Eq.~(9) of \cite{barmann_bipartite_2023}. Thus Theorem 3.6 of that paper holds. 
For the case of $I_{mm22}$ Bell inequalities, it is also clear that $V_0=V_1=\varnothing$, and Theorem 3.7 of \cite{barmann_bipartite_2023} holds. 



\end{example}

\section{Conclusion and discussion}
\label{sec:con}

We proposed the Boolean polynomial polytope with multiple choice constraints, $\MC^H$, to study the feasible region of linearized MCPP~\eqref{eq:Linearized_MCPP}. Two main theorems were proved. The first gives the explicit half-space representation of $\MC^H$ when the hypergraph $H$ is $\alpha$-acyclic. The second provides a necessary and sufficient condition for the inequalities lifted from the facet-inducing ones for the multilinear polytope $\MP^H$ to be still facet-inducing for $\MC^H$. However, compared with the rich polyhedral study for the unconstrained case, 
there is still very little research on $\MC^H$ at current stage and many possible directions are left for the future.
We list some of them below.
\begin{itemize}
\item It will be interesting to develop separation algorithms for the lifted facet-inducing inequalities.

\item It is mentioned that the description of $\aff\MC^H$ for general hypergraph $H$ is still unclear in Section~\ref{sec:affine_hull}. A possible way to determine the implicit equalities for $\aff\MC^H$ is adding hyperedges into $H$ to achieve a downward-closed $H'$ such that $\MC^H$ is isomorphic to $\MC^{H'}$. Then the known properties of $\aff\MC^{H'}$ may be useful for $\aff\MC^H$.

\item Since $\MC^H_\leq$ is actually the same to $\MP^H$ when $|I|=2$ for all $I\in V$ (see Example~\ref{eg:MC_H_leq_iso_MP_H}), it is plausible to think that the results of $\MC^H$, e.g., Theorems~\ref{thm:alpha_acyclic} and \ref{thm:decomp},  may be specialized for $\MP^H$ through the isomorphism between $\MC^H$ and $\MC^H_\leq$ (under a certain condition). 

\item There is a less well-known acyclicity degree called ``cycle-free" defined in \cite{brault-baron_hypergraph_2016}. The condition for this cycle-freedom is even weaker than that for the $\alpha$-acyclicity. It may be worth studying the properties of $\MC^H$ when $H$ is cycle-free.

\item For small hypergraph $H$, there are special facet-inducing inequalities for $\MC^H$ other than those lifted from $\MP^H$ found by symbolic computation. Similar inequalities for general $\MC^H$ still need to be identified and classified.
\end{itemize}

\noindent
{\bf Acknowledgements}. 
This work was funded by the National Key R \& D Program of China (No.~2022YFA1005102) and
the National Natural Science Foundation of China (Nos.~12325112, 12288101).

\addcontentsline{toc}{section}{\refname}

\end{document}